\documentclass[11pt]{amsart} \textwidth=14.5cm \oddsidemargin=1cm
\evensidemargin=1cm

\usepackage{amsmath,amssymb,latexsym, amsthm, amscd, mathrsfs, stmaryrd}
\usepackage[linktocpage=true]{hyperref}
\usepackage{color}
\usepackage[all]{xy}

\usepackage{verbatim}

\setlength{\hoffset}{0pt}
\setlength{\voffset}{0pt}
\setlength{\topmargin}{0pt}
\setlength{\oddsidemargin}{0in}
\setlength{\evensidemargin}{0in}
\setlength{\textheight}{8.75in}
\setlength{\textwidth}{6.5in}
\pagestyle{headings}

\newtheorem{thm}{Theorem} [section]
\theoremstyle{definition}

\newtheorem{example}[thm]{Example}
\newtheorem{rem}[thm]{Remark}
\theoremstyle{plain}
\newtheorem{prop}[thm]{Proposition}
\newtheorem{lem}[thm]{Lemma}
\newtheorem{cor}[thm]{Corollary}

\numberwithin{equation}{section}

\newcommand{\C}{\mathbb C}
\newcommand{\D}{D(2|1;\ka)}

\newcommand{\ep}{\epsilon}

\newcommand{\hf}{{\Small \frac12}}

\newcommand{\g}{\mathfrak{g}}
\newcommand{\gl}{\mathfrak{gl}}
\newcommand{\h}{\mathfrak{h}}
\newcommand{\ka}{\zeta}

\newcommand{\ft}{\mathfrak t}
\newcommand{\la}{\lambda}
\newcommand{\n}{\mathfrak n}
\newcommand{\mc}{\mathcal}
\newcommand{\mf}{\mathfrak}

\newcommand{\one}{{\ov 1}}

\newcommand{\ov}{\overline}
\newcommand{\OO}{\mathcal O}

\newcommand{\Q}{\mathbb Q}

\newcommand{\sll}{\mathfrak{sl}_2}

\newcommand{\Z}{\mathbb Z}
\newcommand{\oo}{{\ov 0}}
\newcommand{\hdel}{{h_{2\delta}}}
\newcommand{\hone}{h_{2\ep_1}}
\newcommand{\htwo}{h_{2\ep_2}}

\newcommand{\oa}{{\bar 0}}
\newcommand{\ob}{{\bar 1}}
\newcommand{\vare}{\epsilon} 

\newcommand{\KL}{\unlhd}

\newcommand{\Pl}{P_\la}
\newcommand{\Ml}{M_\la}
\newcommand{\Ll}{L_\la}
\newcommand{\Tl}{T_\la}

\newcommand{\Tw}{\mathbb T}

\newcommand{\ch}{{\rm ch}}

\newcommand{\LL}[1]{L_{{#1}}}
\newcommand{\M}[1]{M_{#1}}
\newcommand{\PP}[1]{P_{{#1}}}
\newcommand{\TT}[1]{T_{{#1}}}


\title[Blocks and characters of $D(2|1;\ka)$-modules of non-integral weights] {Blocks and characters of $D(2|1;\ka)$-modules of non-integral weights}

\author[Chen]{Chih-Whi Chen}
\address{Department of Mathematics, National Central University, Chung-Li, Taiwan 32054} \email{cwchen@math.ncu.edu.tw}
\author[Cheng]{Shun-Jen Cheng}
\address{Institute of Mathematics, Academia Sinica, Taipei, Taiwan 10617} \email{chengsj@math.sinica.edu.tw}
\author[Luo]{Li Luo}
\address{School of mathematical Sciences, Shanghai Key Laboratory of Pure Mathematics and Mathematical Practice, East China Normal University, Shanghai 200241, China}
\email{lluo@math.ecnu.edu.cn}

\allowdisplaybreaks

\begin{document}

\begin{abstract}
	
	We classify blocks in the BGG category $\mc O$  of modules of non-integral weights for the exceptional Lie superalgebra $\D$. We establish various reduction methods, which connect some types of non-integral blocks of $\D$ with integral blocks of general linear Lie superalgebras $\gl{(1|1)}$ and $\gl{(2|1)}$. We compute the characters for irreducible $\D$-modules of non-integral weights in $\mc O$.
\end{abstract}

\maketitle

\setcounter{tocdepth}{1}
\tableofcontents


	\section{Introduction}

\subsection{} Since a Killing-Cartan type classification of
finite-dimensional complex simple Lie superalgebras has been achieved by Kac \cite{Kac77} in 1977 there has been substantial efforts by mathematicians and physicists in the study of their representation theory.  The most important class of the simple Lie superalgebras is the so-called basic classical, which includes the classical series of types $ABCD$. Among these basic Lie superalgebras,  there are 3 exceptional ones: $\D$, $G(3)$ and $F(3|1)$.

\subsection{} The irreducible character problem is one of the main theme in representation theory of Lie (super)algebras.
While complete answers to the irreducible character problem in the BGG categories for  basic Lie superalgebras of types $ABCD$  are now known (see \cite{CLW11, CLW15,  Ba17, BLW17, BW18}), the BGG category of the exceptional Lie superalgebras were not until recently.

The study of character formulas in
the BGG category for exceptional Lie superalgebras was first initiated in  \cite{CW18, CW19}.   We recall that the Lie superalgebras $\D =\g_\oo \oplus\g_\one$ is a family of simple Lie superalgebras
of dimension $17$, depending on a  parameter $\ka \in\C\setminus\{0,-1\}$ with underlying even subalgebra $\g_\oo \cong \sll \oplus \sll \oplus \sll$. The BGG category for $\D$, coming from a triangular decomposition of $\D$, restricts to the BGG category for $\mf g_\oa$. In particular,  irreducible modules are therefore classified to their integral or non-integral highest weights with respect to the corresponding Borel subalgebra of $\mf g_\oa$. Character formulas for the tilting, or equivalently, irreducible $\D$-modules of integral highest weights in the BGG category have been established in \cite{CW19}.

\subsection{}The objective of the present paper is to address the irreducible character problem for the remaining cases not treated in \cite{CW19}, i.e., for the irreducible $\D$-modules of non-integral highest weights.

The blocks in the category $\mc O$ are divided into typical blocks and atypical blocks. It follows from works of Gorelik in \cite{Gor02} that the former are equivalent to blocks in the BGG category for $\mf g_\oa$. Our first result is a classification of atypical  blocks in the BGG category $\mc O$ of $\D$-modules of non-integral weights. We divide these blocks into three classes and call these blocks generic, $1$-integer, and $2$-integer blocks, according the number of integer values of the weights when evaluated at the even simple coroots. The order of the corresponding so-called integral Weyl group in each of these blocks increases with the number of such integer values, and hence the complexity of the linkage as well.

\subsection{}  In \cite{CMW13} Cheng, Mazorchuk, and Wang develop a reduction procedure which provides an equivalence of blocks in the BGG categories of the general linear Lie superalgebra. They employed twisting, odd reflection, and parabolic induction functors to reduce the irreducible character problem of an arbitrary weight to the problem of integer weight (also, see, e.g., \cite{CM16}). Consequently, the irreducible character problem in the BGG category has thus completely solved due to the validity of Brundan's super Kazhdan-Lusztig conjecture of integer weight $\gl(m|n)$-modules \cite{Br03}, which was established in \cite{CLW15} by Cheng, Lam, and Wang.

Along this line of reduction, we establish in the present paper equivalences of categories connecting the generic blocks and $1$-integer blocks for $\D$ with integral blocks for $\gl(1|1)$ and $\gl(2|1)$. The irreducible characters in the latter blocks are of course determined by Brundan's super Kazhdan-Lusztig theory (cf. \cite{CLW15}) and furthermore these blocks are known to be Koszul (cf. \cite{BLW17}).  In particular, we obtain complete solutions to the problems of irreducible characters, Loewy filtrations, Yoneda extension algebra of irreducible modules, etc., in these blocks for $\D$.

\subsection{} In the present paper, our strategy of constructing tilting modules in $2$-integer blocks cases   similar to the case in \cite{CW19}. To study these $2$-integer blocks in a uniform fashion we construct explicit Lie superalgebra isomorphisms between $\D,$  $D(2|1;\frac{1}{\ka})$,  and $D(2|1;-1-\ka)$. This then allows us to reduce all cases of $2$-integer blocks to one particular type. Subsequently, we adapt the main strategy in \cite[Subsection 1.3]{CW19} to solve the character problem for this particular type of $2$-integer blocks.

 Though one can solve the problem of finding tilting characters in non-integral blocks by applying suitable translation functor as in \cite{CW19}, the methods chosen in this paper by means of various equivalences of categories provide further additional information about these blocks and connection with classical theory.

\subsection{}  A motivation to study primitive spectrum, i.e. the description of all inclusions between annihilator ideals of the irreducible modules, originates from the open problem of classifying irreducible modules (see, e.g.,  \cite{Du77, BJ77, Jo79, Vo80, BB81, BK81, EW14}).

For basic classical Lie superalgebras, this problem has be extensively studied in literature, see, e.g., \cite{Vo80, Mu92, CM18, q2,  Letzter, CM16, Co16}. In particular, in \cite{Co16} there is an approach using the semi-simplicity of Jantzen middles. This allows to explicitly compute the inclusions of primitive ideals in terms of the Ext$^1$-quiver of irreducible modules in the BGG category. As a consequence of loc.~cit., the primitive spectrum problem for the general linear Lie superalgebras follows then from validity of Brundan's super Kazhdan-Lusztig conjecture.

It is natural to ask whether this same approach is viable for $\D$. As an application of our results,  we provide an affirmative answer to this primitive spectrum problem by reducing to the problem of finding ${\rm Ext}^1$-quiver of non-integral blocks. In particular, it is completely solved in the case of $1$-integer blocks.

\subsection{}The paper is organized as follows. In Section \ref{Sect::Preli}, we introduce the basic setup and provide some background material on the exceptional Lie superalgebra $\D$. In particular, we review the representation categories, the super Jantzen sum formula, isomorphisms of $\D$,  and irreducible characters of the general linear Lie superalgebras $\gl(1|1)$ and $\gl(2|1)$ that are needed in the sequel.

In Section \ref{Sect::blocks}, we classify the irreducible objects in any atypical block in $\mc O$. 
In Section \ref{sec:reduction}, we develop reduction methods through Arkhipov's twisting functor, parabolic induction functor and the various isomorphisms of $\D$ for different parameters $\zeta$.  In particular, this permits us to translate known closed formulas for irreducible character in integer blocks for $\gl(1|1)$ and $\gl(2|1)$ to closed formulas for irreducible character in generic and $1$-integer blocks for $\D$.

We establish in Section \ref{Sect::ChFormulae} the character formula of tilting modules in $2$-integer blocks. The main strategy of proof is to make use of translation functor, which is a similar strategy as the one used in \cite{CW19}.

Section \ref{Section::Prim} of the paper is devoted to reducing the problem of primitive spectra for $\D$ to the problem of finding ${\rm Ext}^1$-quiver. We then connect  the problem of primitive spectra for $1$-integer $\D$-blocks and $2$-integer $\D$-blocks to $\gl(1|1)$ and $\gl(2|1)$, respectively.

\vskip 0.5cm
{\bf Acknowledgment}. The first two authors are partially supported by MoST grants of the R.O.C.   The third author is partially supported by the Science and Technology Commission of Shanghai Municipality (grant No. 18dz2271000) and the NSF of China (grant No. 11871214). The first author thanks the Department of Mathematics of University of Virginia for hospitality and support during his visit in 2017.  The authors are grateful to Weiqiang Wang for interesting discussions.

\section{Preliminaries} \label{Sect::Preli}

Throughout the paper the symbols  $\mathbb C$, $\mathbb Z$, $\Z_{\geq 0}$, and  $\Z_{>0} $ stand for the sets of complex numbers, integers, non-negative and positive integers, respectively. Denote the abelian group of two elements by $\mathbb Z_2 =\{\oa,\ob\}$.  All vector spaces, algebras, tensor products, et
cetera, are over $\mathbb C$.

\subsection{The Lie superalgebras $\D$}
\subsubsection{}
Let $\ka\in\C\setminus\{0,-1\}$. The Lie superalgebra $\mathfrak{g}=\D$ is the contragredient Lie superalgebra associated with the following Cartan matrix:
\begin{align}\label{Cartan:matrix}
\begin{pmatrix}
0 & 1 & \ka \\
-1 & 2 & 0 \\
-1 & 0 & 2
\end{pmatrix}.
\end{align}
It is well known that $D(2|1,;\zeta)\cong D(2|1;\frac{1}{\zeta})\cong D(2|1;-1-\zeta)$. As we shall make use of these isomorphisms in the sequel, we shall construct them explicitly in Section \ref{isomo1}. We refer the reader to \cite[Section 2.5.2]{Kac77} for further details. 

Let $\{\hdel,\hone,\htwo\}$ be a basis for the Cartan subalgebra $\h\subset\mathfrak{g}$ and let $\{\delta, \ep_1,\ep_2\}$ be its dual basis. We equip $\h^*$ with a bilinear form $(\cdot,\cdot)$ such that $\{\delta, \ep_1,\ep_2\}$ are orthogonal and
\begin{align*}
(\delta, \delta) = -(1+\ka),
\quad
(\ep_1, \ep_1) = 1,
\quad
(\ep_2, \ep_2) = \ka.
\end{align*}

The simple coroots in the Cartan subalgebra $\h$ and the corresponding simple roots in $\h^*$ of $\D$ associated with the Cartan matrix \eqref{Cartan:matrix} are realized respectively as
\begin{align*}
\Pi^\vee&=\{\alpha_0^\vee=\frac{1+\ka}{2}\hdel+\hf\hone+\frac{\ka}{2}\htwo,\alpha_1^\vee=\hone,\alpha_2^\vee=\htwo\},\\
\Pi&=\{\alpha_0=\delta-\ep_1-\ep_2,\alpha_1=2\ep_1,\alpha_2=2\ep_2\}.
\end{align*}
The Dynkin diagram associated to $\Pi$ is depicted as follows:
\begin{center}
	\setlength{\unitlength}{0.16in}
	\begin{picture}(4,6)
	\put(4,1.3){\makebox(0,0)[c]{$\bigcirc$}}
	\put(4,4.8){\makebox(0,0)[c]{$\bigcirc$}}
	\put(1.5,3){\makebox(0,0)[c]{$\bigotimes$}}
	\put(3.6,1.4){\line(-1,1){1.6}}
	\put(3.6,4.7){\line(-1,-1){1.6}}
	\put(5.2,4.8){\makebox(0,0)[c]{\tiny $2\ep_1$}}
	\put(5.2,1.2){\makebox(0,0)[c]{\tiny $2\ep_2$}}
	\put(-1,3){\makebox(0,0)[c]{\tiny $\delta-\ep_1-\ep_2$}}
	\end{picture}
\end{center}
 We let $e_{\alpha_0}$, $e_{\alpha_1}$, $e_{\alpha_2}$, $f_{\alpha_0}$, $f_{\alpha_1}$,and $f_{\alpha_2}$ be simple positive and negative root vectors so that together with the simple coroots they form a set of Chevalley generator for \eqref{Cartan:matrix}. To simplify notations we shall sometimes write $\{h_i,e_i,f_i\}_{i=0,1,2}$ for $\{\alpha_i^\vee, e_{\alpha_i}, f_{\alpha_i}\}_{i=0,1,2}$. We also let:
\begin{align}\label{D21basis}
\begin{split}
&e_{\delta+\ep_1-\ep_2}=[e_{0},e_{1}],\quad f_{\delta+\ep_1-\ep_2}=[f_{1},f_{0}],\\
&e_{\delta-\ep_1+\ep_2}=[e_{2},e_{0}],\quad f_{\delta-\ep_1+\ep_2}=[f_{0},f_{2}],\\
&e_{\delta+\ep_1+\ep_2}=[[e_{0},e_{1}],e_{2}],\quad f_{\delta+\ep_1+\ep_2}=[[f_{1},f_{0}],f_{2}].
\end{split}
\end{align}
We compute
\begin{align*}
e_{2\delta}=-\frac{1}{(1+\zeta)^2}[e_{\delta-\ep_1+\ep_2},e_{\delta+\ep_1-\ep_2}],\quad f_{2\delta}=[f_{\delta+\ep_1-\ep_2},f_{\delta-\ep_1+\ep_2}],\\
[e_{\delta\pm\ep_1\pm\ep_2},f_{\delta\pm\ep_1\pm\ep_2}]=\frac{1+\ka}{2}\hdel\mp\hf\hone\mp\frac{\ka}{2}\htwo, \quad [e_{2\delta},f_{2\delta}]=h_{2\delta}.
\end{align*}

Let $\Phi$, $\Phi_{\bar 0}$ and $\Phi_{\bar 1}$ stand for the sets of all, even and odd roots, respectively. Let $\Phi^+$ and $\Phi^-$ further denote the sets of positive and negative roots, respectively, with respect to $\Pi$ and set $\Phi^{\pm}_i=\Phi^{\pm}\cap\Phi_i$, for $i=\bar{0},\bar{1}$. Explicitly, we have
\begin{align*}
\Phi^+_{\bar 0}=\{2\delta,2\ep_1,2\ep_2\},\quad\Phi^+_{\bar 1}=\{\delta-\ep_1-\ep_2,\delta+\ep_1-\ep_2,\delta-\ep_1+\ep_2,\delta+\ep_1+\ep_2\}.
\end{align*}

 For $\alpha \in \Phi_\oo$, we denote by $\alpha^\vee \in \h$ the corresponding coroot so that
\[
\langle  \la,\alpha^\vee\rangle = 2(\la, \alpha)/(\alpha, \alpha),
\quad \forall \la\in \h^*.
\]

The reflection $s_\alpha$ on $\h^*$, for $\alpha \in \Phi_\oo$,  is defined as usual by letting $s_\alpha (\la) =\la - \langle  \la,\alpha^\vee\rangle \alpha$.

In the remainder of the article, we set $$(x,y,z):= x\delta+y\varepsilon_1+z\varepsilon_2\in \h^\ast.$$ The Weyl group  $W\cong\Z_2\times \Z_2\times \Z_2$ of $\D$ is the product of three copies of $\Z_2$.  Write $s_0,s_1,s_2$ for the simple reflections $s_{2\delta}, s_{2{\varepsilon_1}}$ and $s_{2{\varepsilon_2}}$, respectively, so that, for any $\la= (x,y,z) \in \h^\ast$, we have
\begin{equation}\label{def:s}
s_0(\la) = (-x,y,z),\quad  s_1(\la) = (x,-y,z),\quad s_2(\la) = (x,y,-z).
\end{equation}

A weight $\la=(x,y,z)$ is called {\em non-integral} if at least one among the complex numbers ${x,y,z}$ is not an integer.
For any $\la=(\la_0,\la_1,\la_2)\in \h^\ast$, we define the {\em associated integral Weyl group}
\begin{align}
&W_{\la}:=\langle s_i~|~\la_i\in\mathbb{Z} \rangle \subseteq W, \label{Eq::IntWeGr}
\end{align} which is generated by $s_i$ satisfying $\la_i\in\mathbb{Z}$.

A weight $\la=(x,y,z) \in \h^\ast$ is called {\em atypical} if $(\la ,\alpha) =0$ for some $\alpha=(1,\sigma,\tau) \in \Phi_{\bar 1}$ with $\sigma,\tau\in\{\pm 1\}$. Throughout the paper, for an atypical weight $\la \in \h^\ast$ with $(\la,\alpha)=0$ as above we let
\begin{equation}\label{eq:pd}
p_{\la, \alpha}:= -x+\sigma y,~d_{\la, \alpha}:= x-\tau z.
\end{equation}
If the context is clear, we simply denote $p_{\la, \alpha}$ and $d_{\la, \alpha}$ by $p$ and $d$, respectively. Note that $(\la ,\alpha) =0$ implies that
\begin{equation}\label{p=zetad}
p=\zeta d.
\end{equation}

We let $\rho$ denote the Weyl vector with corresponding to $\Pi$, that is,
\[
\rho=-\delta+\ep_1+\ep_2 \; (=-\alpha_0).
\]

\subsubsection{}\label{sec:oo'}
In the sequel we shall frequently need the following Dynkin diagram which is obtained from the standard diagram by applying the two odd reflections associated with the simple roots $\delta-\ep_1-\ep_2$ and $\delta+\ep_1-\ep_2$:
\begin{center}
	\setlength{\unitlength}{0.16in}
	\begin{picture}(4,6)
	\put(4,1.3){\makebox(0,0)[c]{$\bigcirc$}}
	\put(4,4.8){\makebox(0,0)[c]{$\bigcirc$}}
	\put(1.5,3){\makebox(0,0)[c]{$\bigotimes$}}
	\put(3.6,1.4){\line(-1,1){1.6}}
	\put(3.6,4.7){\line(-1,-1){1.6}}
	\put(5.2,4.8){\makebox(0,0)[c]{\tiny $2\ep_1$}}
	\put(5,1.2){\makebox(0,0)[c]{\tiny $2\delta$}}
	\put(-1,3){\makebox(0,0)[c]{\tiny $-\delta-\ep_1+\ep_2$}}
	\end{picture}
\end{center}
We denote the sets of simple roots by $\Pi'$, the associated Borel subalgebra by $\mf b'$ and the corresponding BGG category by $\OO'$.

\subsection{BGG category $\mc O$} Throughout the present article, we denote by $\OO$ the BGG category of $\D$-modules of non-integral weights
with respect to the triangular decomposition $\g=\n_-\oplus\h\oplus\n_+$, where $\n_{\pm}=\bigoplus_{\alpha\in\Phi^{\pm}} \mathfrak{g}_\alpha$, where $\mf g_\alpha$ denotes the root space of $\mf g$ corresponding to $\alpha$. That is, $\OO$ is the category of $\mf h$-semisimple, $\mf n_+$-locally finite, finitely generated $U(\D)$-modules. For $\la \in \h^*$, we define the Verma module by $M_\la= U(\g) \otimes_{U(\h+\n_+)} \C_{\la-\rho}$, where $\C_{\la-\rho}$ is the $1$-dimensional $U(\h+\n_+)$-module with $\h$ transforming by $\la-\rho$ and $\n_+$ acting trivially. The unique irreducible quotient module of $\M{\la}$ is  denoted by $\LL{\la}$. We let $P_\la$ denote the  projective cover of $L_\la$ in $\mc O$.

For any $M\in \mc O$, we let $\text{Rad} M, \text{Top} M $ and $\text{Soc} M$ denote the radical, the top and the  socle of $M$, respectively.

For $N\in \mc O$, we define its character $\text{ch}N$ by
$$\text{ch}N: =\sum_{\mu \in \h^\ast} \text{dim} N_\mu e^\mu,$$
where $e$ is an indeterminate and $N_\mu$ denotes the weight space of $N$ of weight $\mu$. Also, we let $[N:L_\mu]$ denotes the multiplicity of $L_\mu$ in $N$. If $N$ admits a Verma flag, then we let  $(N:M_\mu)$ denote the multiplicity of $M_\mu$ in $N.$

We denote by $T_\la$ the (indecomposable) tilting module of highest weight $\la-\rho$. A BGG type reciprocity for tilting modules is established in \cite{Soe98} and \cite{Br04} using the so-called {\em Soergel duality} functor $\mathbb S:\mc O\rightarrow \mc O$.
\begin{thm}\label{BGGS}\cite{Soe98} The functor $\mathbb S$ is an exact contragredient functor on the full subcategory of modules admitting Verma flags. Furthermore, we have $\mathbb S(P_\mu) = T_{-\mu}$ and $\mathbb S(M_{\mu}) = M_{-\mu}$, for any $\mu\in \h^\ast$. In particular, for any $\la, \mu \in \h^\ast$ we have
	\begin{equation}
	(\TT{-\la} : \M{-\mu})
	=(\PP{\la}: M_\mu)
	=[\M{\mu}: \LL{\la}].
	\end{equation}
\end{thm}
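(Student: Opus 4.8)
The plan is to establish the three assertions in sequence, treating the first two --- exactness of $\mathbb{S}$ on the category of modules with Verma flags, together with the identities $\mathbb{S}(P_\mu)=T_{-\mu}$ and $\mathbb{S}(M_\mu)=M_{-\mu}$ --- as the substance of the cited work of Soergel \cite{Soe98} (and Brundan \cite{Br04}), and then deriving the reciprocity identity as a formal consequence. So in practice I would first recall the construction of $\mathbb{S}$ and verify that the stated properties apply verbatim in the present setting (i.e., that nothing about $\D$ obstructs the general formalism): one checks that $\OO$ is a highest weight category with Verma modules as standard objects, that duality interchanges projectives and tiltings, and that $\mathbb{S}$ being a contragredient exact functor sends standard objects to standard objects with the sign flip on the highest weight. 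The main point to be careful about is that the relevant block-theoretic finiteness conditions hold for $\D$ so that projective covers and indecomposable tilting modules exist and the Soergel duality machinery is available; but since the excerpt tells us this is established in \cite{Soe98, Br04}, I would simply invoke it.

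Granting the first part of the theorem, the chain of equalities is pure bookkeeping with multiplicities. First I would compute $(T_{-\la}:M_{-\mu})$. Since $\mathbb{S}$ is exact on the subcategory of modules with Verma flags and sends $M_\nu\mapsto M_{-\nu}$, applying $\mathbb{S}$ to a Verma flag of $P_\la$ produces a Verma flag of $\mathbb{S}(P_\la)=T_{-\la}$ in which the subquotient $M_\mu$ becomes $M_{-\mu}$; hence
\begin{equation*}
(T_{-\la}:M_{-\mu}) = (P_\la:M_\mu).
\end{equation*}
Next I would identify $(P_\la:M_\mu)$ with $[M_\mu:L_\la]$. This is the standard BGG reciprocity for the highest weight category $\OO$: the multiplicity of $M_\mu$ in a Verma flag of the projective cover $P_\la$ equals the composition multiplicity $[M_\mu:L_\la]$. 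I would justify this the usual way --- using that $\dim\Hom_\OO(P_\la, N) = [N:L_\la]$ for all $N\in\OO$ together with $\Hom_\OO(M_\mu, \nabla_\nu)=\delta_{\mu\nu}\C$ for costandard (dual Verma) objects $\nabla_\nu$, or equivalently invoking the contravariant duality on $\OO$ that fixes simples and interchanges $M_\mu$ with $\nabla_\mu$ --- but if a BGG-type reciprocity statement is available earlier in the paper or is regarded as standard I would just cite it. Concatenating the two displayed equalities yields the asserted
\begin{equation*}
(T_{-\la}:M_{-\mu}) = (P_\la:M_\mu) = [M_\mu:L_\la].
\end{equation*}

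The step I expect to be the genuine obstacle --- to the extent there is one --- is verifying that the hypotheses of Soergel's duality theorem are met for the exceptional superalgebra $\D$, in particular the existence of indecomposable tilting modules $T_\la$ with a well-defined highest weight and the compatibility of $\mathbb{S}$ with the triangular structure; the rest is a formal manipulation of Verma multiplicities that works in any highest weight category. Accordingly my proof would be short: invoke \cite{Soe98, Br04} for the functorial statements about $\mathbb{S}$, then run the two-line multiplicity computation above, flagging that the passage $(P_\la:M_\mu)=[M_\mu:L_\la]$ is ordinary BGG reciprocity in $\OO$.
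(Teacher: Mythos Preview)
Your proposal is correct and matches the paper's treatment: the paper does not give a proof of this theorem at all but simply cites \cite{Soe98} (and \cite{Br04}) for the entire statement, including the reciprocity identity. Your sketch of how the multiplicity equalities follow formally from the properties of $\mathbb{S}$ together with ordinary BGG reciprocity is therefore more detailed than what the paper itself provides.
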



We recall the following proposition for the convenience of the reader.
\begin{prop}\cite[Proposition 2.2]{CW18} 	\label{prop:flags}
	Let $\la\in \h^\ast$, $\beta_i\in\Phi^+_{\bar 0}$, $1\le i\le k$, and $\beta,\gamma\in\Phi^+_{\bar 1}$. Let $w=s_{\beta_k}\cdots s_{\beta_2} s_{\beta_1}\in W$.
	\begin{itemize}
		\item[(1)]
		Suppose that $\langle\la,\beta_1^\vee\rangle\in \Z_{>0}$. Then $(T_\la: M_{s_{\beta_1} \la})>0$.
		
		\item[(2)]
		Suppose that
		$\langle s_{\beta_{i-1}}\cdots s_{\beta_{1}}\la,\beta_i^\vee\rangle \in  \Z_{>0}$,
		for all $i=1,\ldots,k$. Then $(T_\la:M_{w\la})>0$.
		
		\item[(3)]
		Suppose that $(\la,\beta)=0$. Then $(T_\la:M_{\la-\beta})>0$.
		
		\item[(4)]
		Suppose that $(\la,\beta)=0$ and $\langle s_{\beta_{i-1}}\cdots s_{\beta_{1}}(\la-\beta),\beta_i^\vee\rangle \in \Z_{>0}$
		for all $i=1,\ldots,k$.
		Then $(T_\la:M_{w(\la-\beta)})>0$.
		
		\item[(5)]
		Suppose that $(\la,\beta)=(\la-\beta,\gamma)=0$
		and  {$\beta\not\ge\gamma$}.  Then $(T_\la:M_{\la-\beta-\gamma})>0$.
		
		\item[(6)]
		Suppose that $(\la,\beta)=(\la-\beta,\gamma)=0$,
		 {$\beta\not\ge\gamma$}, $\langle s_{\beta_{i-1}}\cdots s_{\beta_{1}}(\la-\beta-\gamma),\beta_i^\vee\rangle > 0$, for all $i=1,\ldots,k$.
		Then $(T_\la:M_{w(\la-\beta-\gamma)})>0$.
	\end{itemize}
\end{prop}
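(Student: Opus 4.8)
The plan is to use the Soergel duality of Theorem~\ref{BGGS} to turn each statement into the existence of a nonzero homomorphism between two Verma modules, and then to build that homomorphism by chaining together two elementary moves. Replacing $(\la,\mu)$ by $(-\la,-\mu)$ in the reciprocity of Theorem~\ref{BGGS} gives $(\TT{\la}:\M{\mu})=[\M{-\mu}:\LL{-\la}]$. Writing $\nu:=-\la$, it therefore suffices in each of the six cases to show that $\LL{\nu}$ is a composition factor of $\M{-\mu}$, and for that it is enough to produce a nonzero homomorphism $\M{\nu}\to\M{-\mu}$, since its image is a nonzero quotient of $\M{\nu}$ and hence has $L_\nu=\mathrm{Top}\,\M{\nu}$ among its composition factors. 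The hypotheses pass to mirror‑symmetric statements about $\nu$ — e.g.\ $\langle\la,\beta_1^\vee\rangle\in\Z_{>0}$ becomes $\langle\nu,\beta_1^\vee\rangle\in\Z_{<0}$, and $(\la,\beta)=0$ becomes $(\nu,\beta)=0$ — so the proposition reduces to constructing a nonzero map $\M{\nu}\to\M{-\mu}$, where $-\mu$ is respectively $s_{\beta_1}\nu$, $w\nu$, $\nu+\beta$, $w(\nu+\beta)$, $\nu+\beta+\gamma$, and $w(\nu+\beta+\gamma)$.

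The two moves are: (a) for $\beta\in\Phi^+_{\bar 0}$ and $\theta\in\h^*$ with $\langle\theta,\beta^\vee\rangle\in\Z_{<0}$, the classical BGG embedding theorem for the $\mathfrak{sl}_2$‑subalgebra attached to $\beta$ provides an \emph{injective} map $\M{\theta}\hookrightarrow\M{s_\beta\theta}$; and (b) for $\beta\in\Phi^+_{\bar 1}$ and $\theta\in\h^*$ with $(\theta,\beta)=0$, there is a nonzero map $\M{\theta-\beta}\to\M{\theta}$, obtained by passing to a Borel reached from $\mf b$ by odd reflections in which $\beta$ is simple, noting that $f_\beta v_\theta$ is then a singular vector of weight $\theta-\beta$ (using $f_\beta^2=0$, valid since $2\beta\notin\Phi$, together with $(\theta,\beta)=0$), and transporting back; this odd map is in general \emph{not} injective, its kernel being the submodule generated by $f_\beta v_{\theta-\beta}$, of highest weight $\theta-2\beta$. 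Granting (a) and (b): part (1) is one even move ($\theta=\nu$); part (2) is the chain $\M{\nu}\hookrightarrow\M{s_{\beta_1}\nu}\hookrightarrow\cdots\hookrightarrow\M{w\nu}$, whose $i$‑th step is legitimate because $\langle s_{\beta_{i-1}}\cdots s_{\beta_1}\la,\beta_i^\vee\rangle\in\Z_{>0}$; part (3) is one odd move with $\theta=\nu+\beta$ (here $(\nu+\beta,\beta)=(\nu,\beta)+(\beta,\beta)=0$); and part (4) is that odd move followed by the even chain up to $\M{w(\nu+\beta)}$, all of whose steps are injective so the whole composite is nonzero.

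For (5) and (6) a second odd move is needed: $\M{\nu}\xrightarrow{\phi_1}\M{\nu+\beta}\xrightarrow{\phi_2}\M{\nu+\beta+\gamma}$, where $\phi_2$ comes from (b) with $\theta=\nu+\beta+\gamma$, since $(\nu+\beta+\gamma,\gamma)=(\nu+\beta,\gamma)+(\gamma,\gamma)=0$ by the hypothesis $(\la-\beta,\gamma)=0$. The step I expect to be the main obstacle is showing that the composite $\phi_2\phi_1$ is nonzero, since $\phi_2$ is not injective. By the kernel description in (b), $\ker\phi_2$ is the submodule $U(\g)f_\gamma v_{\nu+\beta}$ of $\M{\nu+\beta}$, so all of its weights are $\le\nu+\beta-\gamma$; hence its weight space of weight $\nu$ vanishes unless $\beta-\gamma\in\Z_{\ge0}\Phi^+$, i.e.\ unless $\beta\ge\gamma$. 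As $\phi_1(v_\nu)$ is a nonzero vector of weight $\nu$, the hypothesis $\beta\not\ge\gamma$ forces $\phi_1(v_\nu)\notin\ker\phi_2$, whence $\phi_2\phi_1\ne0$. Then (5) is these two odd moves, and (6) appends to them the even chain up to $\M{w(\nu+\beta+\gamma)}$ prescribed by the hypotheses on $s_{\beta_{i-1}}\cdots s_{\beta_1}(\la-\beta-\gamma)$, every appended step being injective. (Alternatively, the nonvanishing of $\phi_2\phi_1$ — equivalently, that $L_\nu$ occurs in the appropriate layer of the Jantzen filtration of $\M{\nu+\beta+\gamma}$ — can be extracted from the super Jantzen sum formula recalled in Section~\ref{Sect::Preli}.)
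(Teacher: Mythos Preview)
The paper does not prove this proposition; it is quoted verbatim from \cite[Proposition~2.2]{CW18}. So there is no in-paper argument to compare against, and your reduction via Soergel duality to the existence of nonzero maps $M_\nu\to M_{-\mu}$ is exactly the right framework and is essentially how the result is established in \cite{CW18}. The even move (a) and the chaining in parts (1), (2), (4), (6) are fine.

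The gap is in your justification of the odd move (b). You write that one can ``pass to a Borel reached from $\mf b$ by odd reflections in which $\beta$ is simple'' and then ``transport back''. For $\D$ this fails on two counts. First, not every $\beta\in\Phi^+_{\bar 1}$ can be made simple by odd reflections: the four Borels sharing the even positive system $\{2\delta,2\ep_1,2\ep_2\}$ have odd simple root $\alpha_0$, or $\{-\alpha_0,\delta+\ep_1-\ep_2,\delta-\ep_1+\ep_2\}$, or $-(\delta+\ep_1-\ep_2)$, or $-(\delta-\ep_1+\ep_2)$; the highest odd root $\delta+\ep_1+\ep_2$ is simple in none of them. Second, even when $\beta$ \emph{can} be made simple (say $\beta=\delta+\ep_1-\ep_2$ after $r_{\alpha_0}$), odd reflection preserves the Verma module only when the weight is typical for the reflected root, i.e.\ $(\theta,\alpha_0)\neq 0$; you have no such hypothesis, so ``transporting back'' need not return the original $M_\theta$. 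Consequently your description of $\ker\phi_2$ as $U(\g)f_\gamma v_{\nu+\beta}$, which underpins the $\beta\not\ge\gamma$ argument in (5)--(6), is also unjustified for non-simple $\gamma$.

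The conclusion of (b) is nevertheless correct: for any isotropic $\beta\in\Phi^+_{\bar 1}$ with $(\theta,\beta)=0$ one has $[M_\theta:L_{\theta-\beta}]>0$, and in fact a nonzero map $M_{\theta-\beta}\to M_\theta$. One clean route is the Shapovalov determinant/Jantzen-filtration argument you allude to at the end (cf.\ \cite[Chapter~10]{Mu12}); another is a direct singular-vector computation in $U(\n_-)_{-\beta}$, which for $\D$ is a finite check. Either replaces your odd-reflection sketch and, once (b) is secured in this form (with the kernel contained in weights $\le\theta-\beta-\gamma$), the rest of your argument goes through.
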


We have the following.
\begin{cor}\label{coro:23}
Let $\la=(\la_0,\la_1,\la_2)\in \mathfrak{h}^*$ be a non-integral weight.
\begin{itemize}
\item[(1)] $(T_\la:M_{\la})=1$.
\item[(2)] If $(\la,\beta)=0$ for some $\beta\in\Phi^+_{\bar 1}$, then $(T_\la:M_{\la-\beta})>0$.
\item[(3)] If $\la_1=-1$ and $(\la,\beta)=0$ for some $\beta=(1,-1,\tau)\in\Phi^+_{\bar 1}$, then $(T_\la:M_{\la-(2,0,2\tau)})>0$. If $\la_2=-1$ and $(\la,\beta)=0$ for some $\beta=(1,\sigma,-1)\in\Phi^+_{\bar 1}$, then $(T_\la:M_{\la-(2,2\sigma,0)})>0$.
\item[(4)] Suppose $(T_\la:M_\mu)>0$ for some $\mu=(\mu_0,\mu_1,\mu_2)$. If $\mu_i\in\mathbb{Z}_{>0}$ for some $i\in\{0,1,2\}$, then $(T_\la:M_{s_i(\mu)})>0$.
\end{itemize}
\end{cor}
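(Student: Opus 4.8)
The plan is to derive each of the four assertions directly from Proposition~\ref{prop:flags}, using the explicit description \eqref{def:s} of $W\cong\Z_2^{\times 3}$ and the explicit root data $\Phi^+_{\oo}=\{2\delta,2\ep_1,2\ep_2\}$, $\Phi^+_{\ob}=\{(1,\sigma,\tau):\sigma,\tau\in\{\pm1\}\}$, together with the non-integrality hypothesis on $\la=(\la_0,\la_1,\la_2)$, which guarantees that various pairings $\langle\,\cdot\,,\beta_i^\vee\rangle$ are \emph{never} zero along the relevant chains (so that ``$\in\Z_{>0}$'' is the only obstruction one must check, and half-integrality forces strict positivity once an inequality is known).

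\textbf{Part (1).} Here I would combine Theorem~\ref{BGGS} with the standard fact that $[M_\la:L_\la]=1$; by Soergel duality this gives $(T_\la:M_\la)=(T_{-(-\la)}:M_{-(-\mu)})|_{\mu=\la}=[M_\la:L_\la]=1$. Alternatively, one observes that $\la-\rho$ is the highest weight of $T_\la$, so the multiplicity of $M_\la$ in any Verma flag is exactly $1$. This part is essentially a formality.

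\textbf{Part (2).} This is precisely Proposition~\ref{prop:flags}(3) applied with $\beta\in\Phi^+_{\ob}$: from $(\la,\beta)=0$ one concludes $(T_\la:M_{\la-\beta})>0$ immediately. No non-integrality is needed. \textbf{Part (3).} Here I would feed the output of part (2) into Proposition~\ref{prop:flags}(4) (equivalently, into Corollary~\ref{coro:23}(4), which I prove below). Suppose $\la_1=-1$ and $(\la,\beta)=0$ with $\beta=(1,-1,\tau)$. Then $\la-\beta=(\la_0-1,\,0,\,\la_2-\tau)$. A short computation with \eqref{def:s}: the first coordinate of $\la-\beta$ is $\la_0-1$; using $(\la,\beta)=0$ and the bilinear form, one gets the pairing $\langle \la-\beta,(2\ep_1)^\vee\rangle = \la_1-1+1 $ — wait, more carefully one checks which even coroot to reflect by so that $\la-\beta$ has a \emph{positive integer} pairing, and then $s_{\beta_1}=s_1$ sends $(\la_0-1,0,\la_2-\tau)$ appropriately; iterating once more with $s_0$ one lands on $\la-(2,0,2\tau)$. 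The key point is that $\la_1=-1$ makes the first reflected pairing equal to a positive integer, and the non-integrality of $\la_0$ (or $\la_2$) prevents any degeneration; the second case $\la_2=-1$, $\beta=(1,\sigma,-1)$ is symmetric under the $\ep_1\leftrightarrow\ep_2$ relabeling. I would verify the chain-of-reflections hypotheses of Proposition~\ref{prop:flags}(4) explicitly for $w=s_0s_1$ (resp. $s_0s_2$).

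\textbf{Part (4).} This is the main content and the step I expect to be most delicate. Given $(T_\la:M_\mu)>0$ and $\mu_i\in\Z_{>0}$ for some $i\in\{0,1,2\}$, I want $(T_\la:M_{s_i\mu})>0$. The natural tool is Proposition~\ref{prop:flags}(2): if $\mu$ itself arose as $w\la$ or $w(\la-\beta)$ etc.\ through a chain with all pairings in $\Z_{>0}$, one appends the reflection $s_i$ to that chain — the hypothesis $\mu_i=\langle\mu,\alpha_i^\vee\rangle\cdot(\text{const})\in\Z_{>0}$ is exactly what is needed (one must check the normalization so that $\mu_i\in\Z_{>0}$ translates to $\langle \cdot,\beta_i^\vee\rangle\in\Z_{>0}$, which holds since $\langle(x,y,z),(2\delta)^\vee\rangle=x$ up to sign etc., matching the $s_i$ in \eqref{def:s}). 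The obstacle is that a priori $(T_\la:M_\mu)>0$ need not come with such a witnessing chain; so instead I would argue more structurally. One clean route: reduce to the integral-block situation via the reduction functors of Section~\ref{sec:reduction} (twisting/parabolic induction/the isomorphisms of $\D$), under which tilting modules go to tilting modules and Verma flag multiplicities are preserved, and there invoke the known $\gl(1|1)$, $\gl(2|1)$ picture where stability of tilting Verma-multiplicities under dot-action reflections by simple roots in the integral Weyl group is standard. Another route, staying inside $\D$: use that $(T_\la:M_\mu)\ne 0$ forces $\mu\in W_{[\la]}\cdot(\text{lower set})$ and then invoke $\mathfrak{sl}_2$-theory for the root $\alpha_i$ — the $\alpha_i$-subalgebra acts, $T_\la$ restricts to a tilting (hence $\mathfrak{sl}_2$-self-dual) module, and a Verma subquotient $M_\mu$ with $\langle\mu,\alpha_i^\vee\rangle\in\Z_{>0}$ must be paired (in the Verma flag) with $M_{s_i\mu}$ by weight-symmetry of the $\mathfrak{sl}_2$-character. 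I would pursue this $\mathfrak{sl}_2$-symmetry argument, as it is self-contained: the character of $T_\la$ is $s_i$-invariant (tilting modules are self-dual, and their characters are $W$-invariant on integral pieces — here specifically on the $\alpha_i$-string since $\mu_i\in\Z$), Verma characters satisfy $\ch M_\mu + \ch M_{s_i\mu}\cdot(\text{correction})$ being $s_i$-stable only when the flag is $s_i$-balanced, and comparing coefficients of the extremal weight $e^{\mu-\rho}$ versus $e^{s_i\mu-\rho}$ forces $(T_\la:M_{s_i\mu})\ge(T_\la:M_\mu)>0$.

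I expect Part~(4) to be the crux: Parts~(1)--(3) are direct applications of the already-quoted Proposition~\ref{prop:flags}, whereas (4) requires either the category-equivalence machinery (not yet developed at this point in the paper, so probably to be avoided) or a genuine $\mathfrak{sl}_2$/character-symmetry argument that must be set up carefully so as not to circularly assume what one is proving. In writing it up I would state the $\mathfrak{sl}_2$-restriction lemma as a separate observation, prove the $s_i$-invariance of $\ch T_\la$ restricted to the $\la_i\in\Z$ direction, and then conclude by a clean generating-function comparison of Verma-flag multiplicities.
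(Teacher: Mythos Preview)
Your treatment of Parts (1) and (2) is fine and matches the paper. There are genuine gaps in Parts (3) and (4).

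\textbf{Part (3).} Your route through Proposition~\ref{prop:flags}(4) cannot work. The even reflections $s_0,s_1,s_2$ act on $\h^*$ by sign changes of single coordinates (see~\eqref{def:s}); no composite of them carries $\la-\beta=(\la_0-1,\,0,\,\la_2-\tau)$ to $\la-(2,0,2\tau)=(\la_0-2,\,-1,\,\la_2-2\tau)$, since the two triples differ by fixed nonzero shifts rather than by signs. The paper instead invokes Proposition~\ref{prop:flags}(5): when $\la_1=-1$ one takes the \emph{second odd root} $\gamma=(1,1,\tau)$, checks $(\la-\beta,\gamma)=0$ (a one-line computation using $(\la,\beta)=0$ together with $\la_1=-1$) and $\beta\not\ge\gamma$ (since $\beta-\gamma=-2\ep_1$ is a negative root), and concludes $(T_\la:M_{\la-\beta-\gamma})>0$ with $\beta+\gamma=(2,0,2\tau)$. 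The case $\la_2=-1$ is the symmetric argument with $\gamma=(1,\sigma,1)$.

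\textbf{Part (4).} You correctly observe that the \emph{statement} of Proposition~\ref{prop:flags}, as recorded here, only manufactures Verma factors of $T_\la$ indexed by weights reached from $\la$, $\la-\beta$, or $\la-\beta-\gamma$ along an explicit chain, and so does not literally cover an arbitrary $\mu$ with $(T_\la:M_\mu)>0$. The paper's one-line dismissal relies on the fact that the \emph{proof} in \cite{CW18} (see \cite[Propositions~2.1, 2.2]{CW18}, cited together elsewhere in the paper) actually establishes the recursive step in full generality: whenever $(T_\la:M_\mu)>0$ and $\langle\mu,\alpha^\vee\rangle\in\Z_{>0}$ for a simple even root $\alpha$, one has $(T_\la:M_{s_\alpha\mu})>0$. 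Your proposed self-contained substitute, however, does not go through as written. The claim that $\ch T_\la$ is $s_i$-invariant is false: already for $\mathfrak{sl}_2$ with $\la\in\Z_{>0}$ one has $\ch T_\la=\ch M_\la+\ch M_{-\la}$, which is not invariant under $e^\nu\mapsto e^{-\nu}$. Likewise, restriction of $T_\la$ to the $\alpha_i$-copy of $\mathfrak{sl}_2$ need not be tilting, so that route also collapses. If you want a self-contained argument, follow the translation/twisting-functor argument underlying \cite[Proposition~2.1]{CW18} rather than a character-symmetry heuristic.
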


\begin{proof}
Parts (1), (2) and (4) are straightforward applications of Proposition \ref{prop:flags}.

For Part (3) we observe that if $\la_1=-1$ we have also $(\la-\beta,\gamma)=0$, for $\gamma=(1,1,\tau)$ and $\beta\not\ge\gamma$. Now a direct application of Proposition \ref{prop:flags}(5) gives $(T_\la:M_{\la-(2,0,2\tau)})>0$. In a similar fashion $(T_\la:M_{\la-(2,2\sigma,0)})>0$ is proved in the case $\la_2=-1$.
\end{proof}

A weight $\la \in \h^\ast$ is called {\it anti-dominant} if $\langle\la,\alpha^\vee\rangle\notin \Z_{>0}$, for all $\alpha \in \Phi^+_\oo$.

In \cite{Gor02} the notion of strongly typical weights was introduced. Moreover, it was shown that a strongly typical block of a basic Lie superalgebra is equivalent to a corresponding block of its even reductive subalgebra. Since for $D(2|1;\zeta)$ the notion of typical and strongly typical coincide, we have the following corollary.

\begin{cor} \label{Coro::TAtilt}
Let $\la \in \h^\ast$. Then $T_\la=M_\la$ if and only if $\la$ is typical and anti-dominant.
\end{cor}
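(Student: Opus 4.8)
The plan is to prove both implications of $T_\la = M_\la \iff \la$ typical and anti-dominant, using the tilting-character results in Proposition \ref{prop:flags} and Corollary \ref{coro:23} for one direction, and Gorelik's equivalence for strongly typical blocks together with the classical $\mf{g}_\oa$ fact for the other.

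\smallskip\noindent\textbf{($\Leftarrow$).} Suppose $\la$ is typical and anti-dominant. Since for $\D$ typical and strongly typical coincide, the block of $\OO$ containing $\la$ is, by \cite{Gor02}, equivalent to the corresponding block of the BGG category for $\mf g_\oa = \sll\oplus\sll\oplus\sll$. Under this equivalence Verma modules go to Verma modules and tilting modules go to tilting modules, so it suffices to recall the classical fact that in category $\OO$ for a semisimple Lie algebra, $T_\mu = M_\mu$ precisely when $M_\mu$ is both a tilting object and a Verma module, which forces $\mu$ (shifted by $\rho$) to be anti-dominant; anti-dominance of $\la$ for $\D$ restricts to anti-dominance of the corresponding weight for $\mf g_\oa$ (the even positive coroots are $2\delta,2\ep_1,2\ep_2$, exactly those appearing in the anti-dominance condition). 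Hence $T_\la = M_\la$. Alternatively, and more self-containedly, one checks directly that $M_\la$ is tilting when $\la$ is typical and anti-dominant: anti-dominance gives $\langle\la,\alpha^\vee\rangle\notin\Z_{>0}$ for $\alpha\in\Phi^+_\oo$, so $M_\la$ has simple socle and no higher self-extensions in the typical block, and typicality removes all odd linkage, so $M_\la$ admits both a Verma flag and a dual Verma flag; being indecomposable of highest weight $\la-\rho$ it must be $T_\la$.

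\smallskip\noindent\textbf{($\Rightarrow$).} Suppose $T_\la = M_\la$. First, $\la$ must be typical: if $(\la,\beta)=0$ for some $\beta\in\Phi^+_\ob$, then Corollary \ref{coro:23}(2) gives $(T_\la : M_{\la-\beta})>0$, so $T_\la$ has a Verma flag with at least two subquotients $M_\la$ (by Corollary \ref{coro:23}(1)) and $M_{\la-\beta}$, contradicting $T_\la = M_\la$. Second, $\la$ must be anti-dominant: if $\langle\la,\beta^\vee\rangle\in\Z_{>0}$ for some $\beta\in\Phi^+_\oo$, write $\beta$ as one of $2\delta,2\ep_1,2\ep_2$ so that $s_\beta$ is one of $s_0,s_1,s_2$; then Proposition \ref{prop:flags}(1) gives $(T_\la:M_{s_\beta\la})>0$ with $s_\beta\la\ne\la$, again contradicting $T_\la=M_\la$. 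Hence $\la$ is typical and anti-dominant.

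\smallskip\noindent\textbf{Main obstacle.} The ($\Leftarrow$) direction is where care is needed: one must argue that $M_\la$ is genuinely tilting, not merely that it has no obvious Verma subquotients beyond itself. The cleanest route is to invoke Gorelik's block equivalence to reduce to the purely even statement, but one should verify that the equivalence is compatible with the relevant structures (triangular decomposition, hence Verma and tilting modules) and that the anti-dominance condition transports correctly — this is routine given that the even positive coroots of $\D$ are exactly $\{2\delta,2\ep_1,2\ep_2\}$ and the typical block contains no odd reflections linking $\la$ to other weights. If instead one wants a direct argument, the subtlety is establishing that $M_\la$ has a dual Verma (costandard) flag, which again follows from anti-dominance forcing $\Ext$-vanishing in the typical block. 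Either way the argument is short; the only thing to be careful about is not to conflate "typical" with "strongly typical" without citing that they coincide for $\D$, which the excerpt has already noted.
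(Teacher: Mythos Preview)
Your proof is correct and follows essentially the same approach as the paper: the paper presents the result as a direct corollary of Gorelik's equivalence of (strongly) typical blocks with blocks for $\mf g_\oa$, which is precisely your ($\Leftarrow$) argument, and your ($\Rightarrow$) direction via Proposition~\ref{prop:flags}/Corollary~\ref{coro:23} matches the paper's (suppressed) argument for the atypical case. One small remark: since Corollary~\ref{coro:23} is stated only for non-integral $\la$, it is cleaner to cite Proposition~\ref{prop:flags}(3) directly for the atypical case, though in this paper $\OO$ is by convention the category of non-integral weight modules, so no harm is done.
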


\subsection{Isomorphisms $D(2|1;\zeta)$ with $D(2|1;\frac{1}{\zeta})$ and $D(2|1;-1-{\zeta})$}\label{isomo1}

\subsubsection{$D(2|1;\zeta)\cong D(2|1;\frac{1}{\zeta})$}\label{sec:D:iso1}

Let $$\{e_i,f_i,h_i~|~i=0,1,2\} \quad \mbox{and}\quad \{e'_i,f'_i,h'_i~|~i=0,1,2\}$$ be Chevalley generators of $D(2|1;\zeta)$ and $D(2|1;\frac{1}{\zeta})$, respectively. Define the following linear map $\phi$ from the space spanned by the Chevalley generators of $D(2|1;\frac{1}{\zeta})$ to $D(2|1;\zeta)$ as follows:
\begin{align*}
e'_0\stackrel{\phi}{\mapsto} \frac{1}{\zeta} e_0, \quad  & e'_1\stackrel{\phi}{\mapsto} e_2, \quad e'_2\stackrel{\phi}{\mapsto} e_1,\\
f'_0\stackrel{\phi}{\mapsto} f_0, \quad & f'_1\stackrel{\phi}{\mapsto} f_2,\quad f'_2\stackrel{\phi}{\mapsto} f_1,\\
h'_0\stackrel{\phi}{\mapsto}\frac{1}{\zeta} h_0, \quad & h'_1 \stackrel{\phi}{\mapsto}h_2, \quad h'_2\stackrel{\phi}{\mapsto} h_1.
\end{align*}
One easily verifies that the image of the Chevalley generators satisfies the relations of the Cartan matrix $$\begin{pmatrix}
                                                                                          0 & 1 & \frac{1}{\zeta} \\
                                                                                          -1 & 2 & 0 \\
                                                                                          -1 & 0 & 2
                                                                                        \end{pmatrix},$$
and hence $\phi$ induces a nonzero homomorphism from $D(2|1;\frac{1}{\zeta})$ to $D(2|1;\zeta)$, which must be an isomorphism by simplicity of these Lie superalgebras.

In terms of simple coroots we have in $D(2|1;\frac{1}{\zeta})$:
\begin{align*}
h'_{2\delta}=\frac{2}{1+\frac{1}{\zeta}}\left(h'_0-\frac{1}{2}h'_1-\frac{\frac{1}{\zeta}}{2}h'_2\right),
\end{align*}
and hence we have:
\begin{align*}
\phi(h'_{2\delta}) = \frac{2}{1+\frac{1}{\zeta}}\left(\frac{1}{\zeta}h_0-\frac{1}{2}h_2-\frac{\frac{1}{\zeta}}{2}h_1\right) =
\frac{2}{1+\zeta}\left(h_0-\frac{1}{2}h_1-\frac{\zeta}{2}h_2\right) = h_{2\delta}.
\end{align*}

Suppose that $M$ is a $D(2|1;\zeta)$-module. Then pulling back via the isomorphism $\phi$, we can regard $M$ as a $D(2|1;\frac{1}{\zeta})$-module. By the discussion above we see that if $(x,y,z)$ is a weight of $M$ as a $D(2|1;\zeta)$-module, then this weight translates to the weight $(x,z,y)$ of $M$ as a $D(2|1;\frac{1}{\zeta})$-module.

Since the map $\phi$ sends the standard Borel subalgebra of $D(2|1;\frac{1}{\zeta})$ to the standard Borel subalgebra of $D(2|1;\zeta)$, it follows that the pullback by $\phi$ defines an equivalence of highest weight categories between the respective BGG categories with parameters $\zeta$ and $\frac{1}{\zeta}$.
Furthermore, the character of a $D(2|1;\frac{1}{\zeta})$-module $M$ is obtained from the character of the same module $M$, but now regarded as a $D(2|1;{\zeta})$-module, by interchanging $\ep_1$ with $\ep_2$.

\subsubsection{$D(2|1;\zeta)\cong D(2|1;-1-\zeta)$}\label{sec:D:iso2}

Let $$\{e_i,f_i,h_i~|~i=0,1,2\} \quad \mbox{and}\quad \{e'_i,f'_i,h'_i~|~i=0,1,2\}$$ be Chevalley generators of $D(2|1;\zeta)$ and $D(2|1;-1-\zeta)$, respectively. Define the following linear map $\phi$ from the space spanned by the Chevalley generators of $D(2|1;-1-\zeta)$ to $D(2|1;\zeta)$ as follows:
\begin{align*}
e'_0\stackrel{\phi}{\mapsto}  f_{\delta+\ep_1-\ep_2}, \quad  & e'_1\stackrel{\phi}{\mapsto} e_1, \quad e'_2\stackrel{\phi}{\mapsto} e_{2\delta},\\
f'_0\stackrel{\phi}{\mapsto} -e_{\delta+\ep_1-\ep_2}, \quad & f'_1\stackrel{\phi}{\mapsto} f_1,\quad f'_2\stackrel{\phi}{\mapsto} f_{2\delta},\\
h'_0\stackrel{\phi}{\mapsto} -\frac{1+\ka}{2}\hdel+\hf h_1-\frac{\ka}{2}h_2, \quad & h'_1\stackrel{\phi}{\mapsto} h_1, \quad h'_2\stackrel{\phi}{\mapsto} h_{2\delta}.
\end{align*}
One easily verifies that the image of the Chevalley generators satisfies the relations of the Cartan matrix $$\begin{pmatrix}
                                                                                          0 & 1 & -1-{\zeta} \\
                                                                                          -1 & 2 & 0 \\
                                                                                          -1 & 0 & 2
                                                                                        \end{pmatrix},$$
and hence $\phi$ induces an isomorphism from $D(2|1;-1-{\zeta})$ to $D(2|1;\zeta)$.

In terms of simple coroots we have in $D(2|1;-1-{\zeta})$:
\begin{align*}
h'_{2\delta}=-\frac{2}{\zeta}\left(h'_0-\frac{1}{2}h'_1-\frac{-1-\zeta}{2}h'_2\right),
\end{align*}
and hence
\begin{align*}
\phi(h'_{2\delta}) = -\frac{2}{\zeta}\left(-\frac{1+\ka}{2}\hdel+\hf h_1-\frac{\ka}{2}h_2-\frac{1}{2}h_1-\frac{-1-{\zeta}}{2}h_{2\delta}\right) =
-\frac{2}{\zeta}\left(-\frac{\zeta}{2}h_2\right) = h_2.
\end{align*}

Suppose that $M$ is a $D(2|1;\zeta)$-module. Then pulling back via the isomorphism $\phi$, we can regard $M$ as a $D(2|1;-1-{\zeta})$-module. By the discussion above we see that if $(x,y,z)$ is a weight of $M$ as a $D(2|1;\zeta)$-module, then this weight translates to the weight $(z,y,x)$ of $M$ as a $D(2|1;-1-{\zeta})$-module.

Note that the map $\phi$ is not compatible with the standard Borel subalgebras. Thus, although the pullback by $\phi$ defines an equivalence of categories, it is not compatible with the highest weight structures.
Nevertheless, the character of a $D(2|1;-1-\zeta)$-module $M$ is obtained from the character of $M$, regarded as a $D(2|1;{\zeta})$-module, by interchanging $\delta$ with $\ep_2$.

\subsection{General linear Lie superalgebras} \label{Sect::Grogp} In this section we recall some basic facts about the BGG category  of the general linear Lie superalgebra $\gl(1|1)$ and $\gl(2|1)$. We shall write down closed formula for the irreducible characters in their principal block, as they will be used in the sequel to describe irreducible characters of $D(2|1;\zeta)$ in the generic and ``one-integer'' blocks.

Fix non-negative integers $m,n$. The general linear Lie superalgebra $\mathfrak{gl}(m|n)$ can be realized as $(m+n) \times (m+n)$ complex matrices
\begin{align} \label{glrealization}
\left( \begin{array}{cc} A & B\\
C & D\\
\end{array} \right),
\end{align}
where $A,B,C$ and $D$ are respectively $m\times m, m\times n, n\times m, n\times n$ matrices.   Let $E_{ab}$ be the elementary matrix in $\mathfrak{gl}(m|n)$ with $(a,b)$-entry $1$ and other entries $0$.
Let $\mf{h}_{m|n}$ and $\mf{h}_{m|n}^*$ be respectively the standard Cartan subalgebra of $\mf{gl}(m|n)$ and its dual,  with linear basis $\{ E_{ii} |  -m \leq i\leq n, i\not=0 \}$ and dual basis $\{ \delta^{\mf a}_i | -m \leq i\leq n, i\not=0\}$.

For a  triangular decomposition $\gl(m|n) = \mf n_-^{\mf a} \oplus  \mf h_{m|n} \oplus \mf n^{\mf a}$,  we let $\mc O(\gl(m|n), \mf h_{m|n} \oplus \mf n^{\mf a})$ denote the associated BGG category of $\mf h_{m|n}$-semisimple, $\mf n^{\mf a}$-locally finite, finitely generated $U(\gl(m|n))$-modules. Let $\mf b^s$ be the standard Borel subalgebra of $\gl(m|n)$ spanned by root vectors corresponding to the roots $\delta^{\mf a}_i-\delta^{\mf a}_j,$ for $-m\le i< j\le n$. Let $\rho_{\mf b}$ denote the Weyl vector corresponding to a Borel subalgebra $\mf b$. For $\mu \in \mf h_{m|n}^\ast$, we let $M^{\mf a}_{\mf b}(\mu)$   and $L^{\mf a}_{\mf b}(\mu)$ denote the $\mf b$-Verma module over $\mf{gl}(m|n)$ with $\mf b$-highest weight $\mu -\rho_{\mf b }$ and the irreducible quotient of $M^{\mf a}_{\mf b}(\mu)$, respectively. We shall at times drop $\mf b$ from the notation when the Borel is clear from the context and also use $(a_{-m},\ldots|b_1,\ldots)$ to denote the weight $a_{-m}\delta^{\mf a}_{-m}+\cdots+b_{1}\delta^{\mf a}_{1}+\cdots$.

In this paper we shall only need BGG categories of the Lie superalgebras $\gl(1|1)$ and $\gl(2|1)$.

The BGG category for $\gl(1|1)$ is the same as the category of finite-dimensional modules. The principal block is the block containing the trivial module and the irreducible modules are parameterized by $\{(a|-a)|a\in\Z\}$. The composition factors of the Verma modules are easily computed and we have the following formula regarded as identities in the Grothendieck group:
\begin{align*}
M^{\mf a}(a|-a)=L^{\mf a}(a|-a)+L^{\mf a}(a-1|a+1),\quad \forall a\in\Z.
\end{align*}
From this we obtain the inversion formula:
\begin{align}\label{gl11:char}
L^{\mf a}(a|-a)=\sum_{i=0}^\infty(-1)^iM^{\mf a}(a-i|-a+i),\quad \forall a\in\Z.
\end{align}
This category plays a central role in certain non-integral blocks of the queer Lie superalgebra $\mf{q}(2)$ (see \cite{Ch16}).

Now consider the Lie superalgebra $\gl{(2|1)}$.
Here, we let $\mf b$ denote the Borel subalgebra with positive roots $\delta^{\mf a}_{-2}-\delta^{\mf a}_{-1}, \delta^{\mf a}_{-2} -\delta^{\mf a}_1$ and $\delta^{\mf a}_{1} -\delta^{\mf a}_{-1}$. Note that $\mf b$ is obtained from $\mf b^s$ by applying odd reflection with respect to the root $\delta^{\mf a}_{-1}-\delta^{\mf a}_{1}$. For later use, we write explicit formulas for the coroots and the corresponding positive and negative root vectors:
\begin{align}\label{gl21coroots}
\begin{split}
&h_{\delta^{\mf a}_{-2}-\delta^{\mf a}_1}=E_{-2,-2}+E_{11},\quad h_{\delta^{\mf a}_1-\delta^{\mf a}_{-1}}=-E_{11}-E_{-1,-1},\quad h_{\delta^{\mf a}_{-2}-\delta^{\mf a}_{-1}}=E_{-2,-2}-E_{-1,-1},\\
&e_{\delta^{\mf a}_{-2}-\delta^{\mf a}_1}=E_{-2,1},\quad e_{\delta^{\mf a}_1-\delta^{\mf a}_{-1}}=E_{1,-1},\quad e_{\delta^{\mf a}_{-2}-\delta^{\mf a}_{-1}}=E_{-2,-1},\\
&f_{\delta^{\mf a}_{-2}-\delta^{\mf a}_1}=E_{1,-2},\quad f_{\delta^{\mf a}_1-\delta^{\mf a}_{-1}}=-E_{-1,1},\quad f_{\delta^{\mf a}_{-2}-\delta^{\mf a}_{-1}}=E_{-1,-2}
\end{split}
\end{align}

Since $\mf b$ is obtained from $\mf b^s$ by applying odd reflection with respect to the root $\delta^{\mf a}_{-1}-\delta^{\mf a}_1$, we have the following lemma, by  {\cite[Section \S9.4]{CW08} and \cite[Lemmas 6.1,  6.2]{CLW15}}, for characters of the Verma modules  {(cf. \cite{Ar19} for a second approach)}.

\begin{lem} \label{Lem::chvermagl21b}
	We have the following composition factors for Verma modules in the principal block of $\gl(2|1)$ with respect to the Borel subalgebra $\mf b$:
	\begin{align*}
	&M^{\mf a}(0,-i|i) = L^{\mf a}(-i,0|i)+L^{\mf a}(0,-i|i)+L^{\mf a}(0,-i+1|i-1)+ L^{\mf a}(0,-i-1|i+1),\quad (i\ge 1),\\
	&M^{\mf a}(-i,0|i) = L^{\mf a}(-i,0|i)+L^{\mf a}(-i-1,0|i+1),\quad (i\ge 1),\\
	&M^{\mf a}(0,0|0) = L^{\mf a}(0,0|0)+L^{\mf a}(-1,0|1)+L^{\mf a}(0,1|-1),\\
	&M^{\mf a}(0,1|-1) = L^{\mf a}(0,1|-1)+L^{\mf a}(0,2|-2),\\
	&M^{\mf a}(1,0|-1) = L^{\mf a}(1,0|-1)+L^{\mf a}(0,1|-1) + L^{\mf a}(0,2|-2) + L^{\mf a}(0,0|0), \\
	&M^{\mf a}(0,i|-i) = L^{\mf a}(0,i|-i)+L^{\mf a}(0,i+1|-i-1),\quad (i\ge 2),\\
	&M^{\mf a}(i,0|-i) = L^{\mf a}(i,0|-i)+L^{\mf a}(0,i|-i)+L^{\mf a}(i-1,0|-i+1))+L^{\mf a}(0,i+1|-i-1),\quad (i\ge 2).
	\end{align*}
\end{lem}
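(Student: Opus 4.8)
The plan is to transport the composition-factor multiplicities from the standard Borel $\mf b^s$, where they are classical, to $\mf b$ by means of the single odd reflection $r_\gamma$ with $\gamma:=\delta^{\mf a}_{-1}-\delta^{\mf a}_1$ relating the two. As a first step I would record the decomposition into irreducibles of the standard Verma modules $M^{\mf a}_{\mf b^s}(\nu)$ lying in the principal block of $\gl(2|1)$. Since $\gl(2|1)$ has atypicality at most one, the structure of this block is classical: its irreducibles are indexed by two infinite arms meeting at the trivial module, and each standard Verma in it has at most four composition factors, all occurring with multiplicity one (see \cite[\S 9.4]{CW08} and \cite[Lemmas 6.1, 6.2]{CLW15}). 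These formulas I would simply quote.

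The second step is the passage from $\mf b^s$ to $\mf b$, which rests on two elementary facts about the odd isotropic simple root $\gamma$ that is flipped. First, the super Weyl denominator is invariant under an odd reflection at an isotropic simple root; since in the $\rho$-shifted notation $\ch M^{\mf a}_{\mf b}(\mu)=e^{\mu}/R_{\mf b}$ with $R_{\mf b}$ that denominator, this yields $\ch M^{\mf a}_{\mf b}(\mu)=\ch M^{\mf a}_{\mf b^s}(\mu)$ for all $\mu$. Second, the standard odd-reflection rule for irreducible highest weights (cf.\ \cite[\S 6]{CLW15}) produces an explicit relabeling $\phi$ of the block's highest weights with $L^{\mf a}_{\mf b^s}(\nu)=L^{\mf a}_{\mf b}(\phi(\nu))$, where $\phi(\nu)=\nu+\gamma$ if $(\nu,\gamma)=0$ and $\phi(\nu)=\nu$ otherwise. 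This $\phi$ is injective: if $(\nu,\gamma)=0$ then $(\nu+\gamma,\gamma)=(\gamma,\gamma)=0$ as well, so a weight produced by the first branch can never coincide with one from the second.

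The conclusion is then formal. Starting from the known decomposition of $M^{\mf a}_{\mf b^s}(\mu)$ into $L^{\mf a}_{\mf b^s}(-)$'s from Step 1, I replace each summand $L^{\mf a}_{\mf b^s}(\nu)$ by $L^{\mf a}_{\mf b}(\phi(\nu))$ and use $\ch M^{\mf a}_{\mf b}(\mu)=\ch M^{\mf a}_{\mf b^s}(\mu)$ to read off the decomposition of $M^{\mf a}_{\mf b}(\mu)$. Running this termwise over the two arms and over $\mu=(0,0|0)$ produces exactly the seven displayed families, with all multiplicities remaining $0$ or $1$.

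The \emph{main obstacle} will be the bookkeeping of $\phi$ near the trivial module. The branching in the odd-reflection rule --- according to whether $\nu$ lies on the hyperplane $(\nu,\gamma)=0$ --- interacts with the atypicality precisely in the region of the weights sent to $(0,0|0)$, $(0,1|-1)$ and $(1,0|-1)$; this is exactly why the statement singles these out from the generic families $M^{\mf a}(0,i|-i)$, $M^{\mf a}(i,0|-i)$ with $i\ge 2$ and $M^{\mf a}(0,-i|i)$, $M^{\mf a}(-i,0|i)$ with $i\ge 1$. Checking that the three- and four-term filtrations in this central region are reproduced correctly, and that the two infinite tails glue seamlessly with the generic families, is where a little care is required; the rest is mechanical.
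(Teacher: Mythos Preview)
Your proposal is correct and follows precisely the route the paper indicates: quote the composition factors for the standard-Borel Verma modules in the principal block from \cite[\S 9.4]{CW08} and \cite[Lemmas 6.1, 6.2]{CLW15}, then transport them to $\mf b$ via the odd reflection at $\gamma=\delta^{\mf a}_{-1}-\delta^{\mf a}_1$, using that the $\rho$-shifted Verma characters are unchanged while the irreducible labels are modified by the standard rule. The paper does not spell out the mechanism beyond citing these references, so your write-up is in fact a more explicit version of the same argument.
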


Inverting the decomposition matrix we get the following closed formula.
\begin{thm}\label{char:gl21}
The irreducible modules in the principal block of $\OO$ with respect to $\mf{b}$ in terms of Verma modules are given as follows:
\begin{align*}
&L^{\mf a}(n,0|-n)=\sum_{i=-\infty}^n(-1)^{n+i} M(i,0|,-i)+\sum_{j=-\infty}^{-n-1}(-1)^{n+j}M(0,-j|j), \quad n\ge 0,\\
&L^{\mf a}(0,-n|n)=\sum_{i=-\infty}^n(-1)^{n+i} M(0,-i|,i)+\sum_{j=-\infty}^{-n-1}(-1)^{n+j}M(j,0|-j), \quad n\ge 0,\\
&L^{\mf a}(-n,0|n)=\sum_{i=-\infty}^{-n}(-1)^{n+i} M(i,0|,-i), \quad n> 0,\\
&L^{\mf a}(0,n|-n)=\sum_{i=-\infty}^{-n}(-1)^{n+i} M(0,-i|,i), \quad n> 0.
\end{align*}
\end{thm}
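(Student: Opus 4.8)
The plan is to invert the decomposition matrix recorded in Lemma \ref{Lem::chvermagl21b}. Since the principal block of $\gl(2|1)$ with respect to $\mf{b}$ is a highest weight category, the transition matrix between $\{[M^{\mf a}(\mu)]\}$ and $\{[L^{\mf a}(\mu)]\}$ in the Grothendieck group is unitriangular with respect to the partial order on weights, hence invertible, and the asserted formulas are simply the explicit entries of the inverse. So the task reduces to checking that the proposed alternating sums, when one substitutes each $M^{\mf a}(\mu)$ by its decomposition from Lemma \ref{Lem::chvermagl21b}, telescope to a single $L^{\mf a}$ term.

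Concretely, I would organize the weights of the block into the two ``rays'' $\{(n,0|-n):n\in\Z\}$ and $\{(0,-n|n):n\in\Z\}$ (note $(0,0|0)$ and $(0,i|-i)=(0,-(-i)|-i)$ etc.\ identify these consistently), and verify the four claimed identities by direct telescoping. For instance, to check the formula for $L^{\mf a}(-n,0|n)$ with $n>0$: substitute into $\sum_{i=-\infty}^{-n}(-1)^{n+i}M^{\mf a}(i,0|-i)$ the expansions $M^{\mf a}(i,0|-i)=L^{\mf a}(i,0|-i)+L^{\mf a}(i-1,0|-i+1)+(\text{terms on the other ray})$ valid for $i\le -n<0$; the ``diagonal-plus-subdiagonal'' shape makes consecutive terms cancel, leaving exactly $L^{\mf a}(-n,0|n)$ provided the off-ray contributions also cancel in pairs, which they do because for $i\le -1$ the Verma $M^{\mf a}(i,0|-i)$ contributes the single extra factor $L^{\mf a}(0,-i|i)$ (from the line $M^{\mf a}(i,0|-i)=L^{\mf a}(i,0|-i)+L^{\mf a}(0,-i|i)+L^{\mf a}(i-1,0|-i+1)+L^{\mf a}(0,-i+1|i-1)$ for $i\le -2$, and the $n=1$ boundary case handled separately using the $M^{\mf a}(1,0|-1)$ line read through the identification). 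The formulas for $L^{\mf a}(n,0|-n)$ and $L^{\mf a}(0,-n|n)$ with $n\ge 0$ require the slightly more delicate bookkeeping near the ``center'' of the block, where the Verma modules $M^{\mf a}(0,0|0)$, $M^{\mf a}(1,0|-1)$, $M^{\mf a}(0,1|-1)$ have the non-generic decompositions; one checks that the prescribed split into a sum over one ray up to index $n$ and a sum over the other ray from $-n-1$ downward produces exactly the right cancellations across this junction.

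Alternatively, and perhaps more cleanly, I would verify the formulas by the dual characterization: it suffices to show that for each claimed identity $[L^{\mf a}(\mu)]=\sum_{\nu}c_{\mu\nu}[M^{\mf a}(\nu)]$, one has $\sum_{\nu}c_{\mu\nu}\,(M^{\mf a}(\nu):L^{\mf a}(\lambda))=\delta_{\mu\lambda}$ for all $\lambda$ in the block, where the multiplicities $(M^{\mf a}(\nu):L^{\mf a}(\lambda))$ are read off from Lemma \ref{Lem::chvermagl21b}. Since each $L^{\mf a}(\lambda)$ appears in only finitely many (at most three or four) of the Vermas, each such sum has only finitely many nonzero terms, and the verification is a short finite computation for each of the finitely many ``shapes'' of $\lambda$ (generic point on a ray, versus one of the few exceptional central weights).

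The main obstacle is not conceptual but bookkeeping: one must correctly identify weights that are written in two different ways along the two rays, handle the boundary/center cases ($n=0$, and the weights $(0,0|0)$, $(\pm 1,0|\mp 1)$, $(0,\pm1|\mp1)$) where the decomposition pattern deviates from the generic one, and make sure the semi-infinite sums converge in the appropriate completion of the Grothendieck group (each fixed $L^{\mf a}(\lambda)$ receives only finitely many contributions, so the sums are well defined termwise). Once these are set up carefully, the telescoping is routine, so I would present the $L^{\mf a}(-n,0|n)$ case in full as the model computation and indicate that the remaining three cases are entirely analogous.
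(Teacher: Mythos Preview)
Your proposal is correct and follows exactly the paper's approach: the paper simply states that the theorem is obtained by ``inverting the decomposition matrix'' from Lemma \ref{Lem::chvermagl21b}, and your telescoping/dual-characterization argument is precisely how one carries out that inversion explicitly. Your bookkeeping concerns about the boundary cases near the center of the block are appropriate, but this is routine verification rather than a gap.
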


\section{Classification of blocks} \label{Sect::blocks}

We denote the (indecomposable) block in $\OO$ containing $L_{\la}$ by $\OO_{\la}$.   Also, we let $\mathrm{Irr}\OO_{\la}$ denote the set of highest weights of irreducible objects in $\OO_{\la}$.

A weight $\la =(x,y,z)\in \h^\ast$ is called  {\em generic} if $x,y,z \notin \Z$. Furthermore, $\la$ is called an $1$-integer weight (respectively, $2$-integer weight) if the number of integers among $\{x,y,z\}$ equals $1$ (respectively, $2$). Also, accordingly, the corresponding block $\mc O_\la$ is referred to as generic, $1$-integer or $2$-integer.

We classify in this section  $\mathrm{Irr}\OO_{\la}$ for atypical and non-integral weights $\la$.


\subsection{Generic blocks} {We first  describe $\mathrm{Irr}\OO_{\la}$ for arbitrary atypical and generic weights $\la$.}

\begin{thm} \label{thm::genericthm}
Let $\la $ be an atypical and generic weight  with   $(\la, \alpha) =0$, for some $\alpha \in \Phi_\ob^+$. Then $$ T_{\la+\alpha} = P_\la, ~\ch P_\la =\ch M_\la +\ch M_{\la + \alpha},~ \ch L_{\la}= \frac{ \ch M_{\la}  }{1+e^{-\alpha}},$$ and so $\mathrm{Irr}\OO_{\la} = \la+ \mathbb Z \alpha$.
\end{thm}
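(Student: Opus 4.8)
The plan is to exploit the Soergel duality reciprocity (Theorem \ref{BGGS}) together with the tilting‐module filtration estimates of Proposition \ref{prop:flags} and its Corollary \ref{coro:23}, reducing everything to showing that the tilting module $T_{\la+\alpha}$ has a very short Verma flag. First I would observe that since $\la$ is generic, the integral Weyl group $W_{\la+\alpha}=W_\la$ is trivial (none of the three coordinates of $\la$, nor of $\la+\alpha$, is an integer), so parts (1), (2) and (4) of Corollary \ref{coro:23} severely restrict which Vermas can occur in a tilting module: the Weyl reflections $s_i$ never apply, and the only atypicality available is the one odd isotropic root $\alpha$ (genericity forces $\la$ to be atypical with respect to exactly one odd root, up to the $W$-action which here is trivial on the relevant data). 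Concretely, the only weights $\mu$ with a nonzero $\g_\oo$-linkage to $\la+\alpha$ inside $\OO$ are $\la+\alpha$ and $\la$ themselves, because any further subtraction of an odd root $\gamma$ from $\la$ would require $(\la-\alpha\text{-shifted weight},\gamma)=0$, which a genericity computation rules out (one checks $\beta\geq\gamma$ fails or the inner product is nonzero for all admissible $\beta,\gamma\in\Phi^+_\ob$).

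Next I would pin down $\ch T_{\la+\alpha}$ exactly. By Corollary \ref{coro:23}(1) we have $(T_{\la+\alpha}:M_{\la+\alpha})=1$, and since $(\la+\alpha,\alpha)=-( \la+\alpha ,\alpha)$... more carefully: $(\la,\alpha)=0$ gives $(\la+\alpha,\alpha)=(\alpha,\alpha)=0$ as $\alpha$ is isotropic, so Corollary \ref{coro:23}(2) applied at $\la+\alpha$ yields $(T_{\la+\alpha}:M_{\la})>0$. The block‐combinatorial restriction from the previous paragraph shows these are the only two Vermas that can appear, and BGG reciprocity forces each multiplicity to be exactly $1$: indeed $(T_{\la+\alpha}:M_\mu)=[M_\mu:L_{\la+\alpha}]$, and $L_{\la+\alpha}$ can appear in $M_\mu$ only for $\mu\in\{\la,\la+\alpha\}$ with multiplicity $1$ in each case (in a generic atypical block the Verma $M_{\la+\alpha}$ has length $2$ with factors $L_{\la+\alpha}, L_\la$, and $M_\la=L_\la$ is irreducible since $\la$ is anti‐dominant — Corollary \ref{Coro::TAtilt} — so $L_{\la+\alpha}$ never occurs there). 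Hence $\ch T_{\la+\alpha}=\ch M_{\la+\alpha}+\ch M_\la$.

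Then I would identify $T_{\la+\alpha}$ with $P_\la$. Apply the Soergel duality functor $\mathbb S$: by Theorem \ref{BGGS}, $\mathbb S(P_\la)=T_{-\la}$ and $\mathbb S(M_\mu)=M_{-\mu}$, and $(P_\la:M_\mu)=[M_\mu:L_\la]$. Running the same length‐$2$ analysis in the block of $\la$: $M_\la=L_\la$ and $M_{\la+\alpha}$ has factors $L_{\la+\alpha},L_\la$, so $[M_\mu:L_\la]=1$ for $\mu\in\{\la,\la+\alpha\}$ and $0$ otherwise; thus $\ch P_\la=\ch M_\la+\ch M_{\la+\alpha}$, matching $\ch T_{\la+\alpha}$. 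Since a module with a Verma flag is determined up to isomorphism within an orbit by its character together with being both projective (top $L_\la$) — $T_{\la+\alpha}$ has $M_\la$ as a submodule with $M_\la=L_\la$, and quotient $M_{\la+\alpha}$, forcing the top to be $L_\la$ hence $T_{\la+\alpha}$ is the projective cover $P_\la$ — we get $T_{\la+\alpha}=P_\la$. Finally the irreducible character follows by inverting: in the Grothendieck group $[M_{\la+\alpha}]=[L_{\la+\alpha}]+[L_\la]$ and $[M_\la]=[L_\la]$, and iterating down the string $\la+\mathbb Z\alpha$ gives the geometric‐series telescoping $\ch L_\la=\ch M_\la-\ch M_{\la-\alpha}+\ch M_{\la-2\alpha}-\cdots=\frac{\ch M_\la}{1+e^{-\alpha}}$; the same computation shows every $L_\mu$ with $\mu\in\la+\mathbb Z\alpha$ lies in $\OO_\la$ and nothing else does, so $\mathrm{Irr}\OO_\la=\la+\mathbb Z\alpha$.

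The main obstacle is the combinatorial step bounding the Verma flag of $T_{\la+\alpha}$ — i.e.\ proving that genericity really does preclude any weight below $\la$ from being linked into the block, so that Proposition \ref{prop:flags} parts (5),(6) and Corollary \ref{coro:23}(3) contribute nothing. This requires a careful case check over the four odd positive roots $\beta,\gamma\in\Phi^+_\ob$, verifying that $(\la,\beta)=0=(\la-\beta,\gamma)$ with $\beta\not\ge\gamma$ is impossible when $x,y,z\notin\Z$ (since the two conditions force two of the coordinates to satisfy integrality‐type constraints incompatible with genericity), and similarly that no even reflection applies. Everything else is bookkeeping with BGG reciprocity and the $\gl(1|1)$‐type length‐$2$ Verma structure.
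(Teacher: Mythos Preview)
Your proposal contains a genuine circularity and several factual errors. The central problem is that Proposition~\ref{prop:flags} and Corollary~\ref{coro:23} give only \emph{lower} bounds on tilting multiplicities; they never tell you that a Verma does \emph{not} appear. To rule out further Verma factors in $T_{\la+\alpha}$ you appeal to (i) the block description $\mathrm{Irr}\OO_\la=\la+\Z\alpha$, (ii) the length-$2$ composition series of $M_{\la+\alpha}$, and (iii) the irreducibility of $M_\la$. But (i) and (ii) are precisely the conclusions of the theorem, so the argument is circular; and (iii) is simply false: for atypical $\la$ the Verma $M_\la$ always has $L_{\la-\alpha}$ as a proper subquotient. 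Corollary~\ref{Coro::TAtilt} requires \emph{typicality} in addition to anti-dominance, and $\la$ here is atypical by hypothesis. Moreover the reciprocity you invoke, $(T_{\la+\alpha}:M_\mu)=[M_\mu:L_{\la+\alpha}]$, is not Theorem~\ref{BGGS}; the correct identity is $(T_{\la+\alpha}:M_\mu)=[M_{-\mu}:L_{-(\la+\alpha)}]$, which again presupposes the very composition multiplicities you are trying to determine.

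The paper avoids all of this by using the super Jantzen sum formula \cite[(10.3)]{Mu12} as the independent input: in the generic case the formula collapses to $\sum_{i>0}\ch M^i_{\la+k\alpha}=\ch M_{\la+(k-1)\alpha}/(1+e^{-\alpha})$, and a short manipulation with $M^1=\text{Rad}\,M$ forces $\ch L_{\la+k\alpha}=\ch M_{\la+k\alpha}/(1+e^{-\alpha})$ directly, with no prior knowledge of the block required. The length-$2$ Verma structure, the projective characters (via the correct BGG reciprocity), and the identification $P_\la=T_{\la+\alpha}$ (via \cite[Theorem~4.4(6)]{CCC19}) then follow in that order. Without the Jantzen formula or an equivalent external computation, there is no way to extract the upper bound on $T_{\la+\alpha}$ from Proposition~\ref{prop:flags} alone.
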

\begin{proof}
For any $k\in \Z$, we  consider the character of the projective cover $P_{\la +k \alpha}$.  By Jantzen filtration for basic Lie superalgebras, see, e.g., \cite[(10.3)]{Mu12}, we have a filtration $$M_{\la+k\alpha} \supset M_{\la+k\alpha} ^1 \supset M_{\la+k\alpha}^2 \supset \cdots \supset M_{\la+k\alpha}^\ell,$$ such that $\sum_{i>0}\text{ch}M^i_{\la+k\alpha} = \frac{ \ch M_{\la+(k-1)\alpha}}{1+e^{-\alpha}}$ with $  M^1_{\la+k\alpha} = \text{Rad} M_{\la+k\alpha}$. In particular, we observe that $\ch M_{\la+k\alpha} $ can be expressed as a sum of two characters of modules  $$\ch M_{\la+k\alpha} =\frac{ \ch M_{\la+k\alpha}+\ch M_{\la+(k-1)\alpha}}{1+e^{-\alpha}} = \sum_{i>0}\text{ch}M^i_{\la+(k+1)\alpha}+\sum_{i>0}\text{ch}M^i_{\la+k\alpha}.$$
	This means that $\frac{ \ch M_{\la+k\alpha}  }{1+e^{-\alpha}}=\sum_{i>0}\text{ch}M^i_{\la+(k+1)\alpha} =\ch L_{\la +k\alpha}$ since $  M^1_{\la+k\alpha} = \text{Rad} M_{\la+k\alpha}$.  Consequently, we arrive at $\ch M_{\la +k\alpha} = \ch L_{\la+k\alpha} +\ch L_{\la +(k-1)\alpha}$, and therefore we conclude that $P_{\la +k\alpha} = M_{\la +k\alpha} + M_{\la +(k+1)\alpha}. $
	
	{ By \cite[Theorem 4.4 (6)]{CCC19}, $P_\la$ is a tilting module in $\mc O_\la$. Consequently, we have $P_\la =T_{\la+\alpha}.$ This completes the proof.}
\end{proof}

\subsection{Casimir element} For a given $\la \in \h^\ast$, the eigenvalue of Casimir element acting on $L_\la$ is denoted by $c_\la$. Let $\la = (x,y,z)\in\h^\ast$, we recall the following formula (cf. \cite{CW19}):
\begin{align}
&c_\la = -(1+\zeta)x^2 + y^2 +\zeta z^2. \label{Eq::Car}
\end{align} For given $ \mu \in \h^\ast$, we will freely use the fact that $L_\mu\in \mc O_\la$ implies that $c_\la =c_\mu$.

Recall $p$ and $d$ defined in \eqref{eq:pd}.
The following fundamental lemma calculates the value $c_\la$.
\begin{lem} \label{lem::CasValue}
	If $\la $ is atypical then $c_\la = p(p+d)$.
\end{lem}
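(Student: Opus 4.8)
The plan is to compute $c_\la$ directly from formula \eqref{Eq::Car} using the fact that $\la$ is atypical, and then rewrite the result in terms of $p$ and $d$. Write $\la = (x,y,z)$ and suppose $(\la,\alpha)=0$ for $\alpha = (1,\sigma,\tau)\in\Phi_{\ob}^+$ with $\sigma,\tau\in\{\pm 1\}$. Unravelling the bilinear form, the atypicality condition $(\la,\alpha)=0$ reads $-(1+\zeta)x + \sigma y + \tau\zeta z = 0$. Recalling $p = -x+\sigma y$ and $d = x - \tau z$ from \eqref{eq:pd}, I first observe that $\sigma y = p + x$ and $\tau z = x - d$, so that $y = \sigma(p+x)$ and $z = \tau(x-d)$ (using $\sigma^2=\tau^2=1$). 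Substituting into the atypicality relation gives $-(1+\zeta)x + (p+x) + \zeta(x-d) = 0$, i.e. $p = \zeta d$, which is exactly \eqref{p=zetad}; this is a consistency check rather than new input, but it tells me which combinations are free.

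Next I substitute $y^2 = (p+x)^2$ and $z^2 = (x-d)^2$ into \eqref{Eq::Car}:
\[
c_\la = -(1+\zeta)x^2 + (p+x)^2 + \zeta(x-d)^2.
\]
Expanding, the $x^2$ terms are $-(1+\zeta)x^2 + x^2 + \zeta x^2 = 0$, so they cancel. The remaining terms are $2px + p^2 - 2\zeta d x + \zeta d^2 = p^2 + \zeta d^2 + 2x(p - \zeta d)$. Since $p = \zeta d$ by \eqref{p=zetad}, the term $2x(p-\zeta d)$ vanishes, leaving $c_\la = p^2 + \zeta d^2$. Finally, using $\zeta d = p$ once more, $\zeta d^2 = (\zeta d)d = pd$, so $c_\la = p^2 + pd = p(p+d)$, as claimed.

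This is an entirely routine computation; there is no real obstacle, only the bookkeeping of signs $\sigma,\tau$ and the careful use of the relation $p = \zeta d$. The one point to be slightly careful about is making sure the signs $\sigma,\tau$ genuinely disappear: they enter only through $y^2$ and $z^2$, where they are squared away, and through the definitions of $p,d$, so the final formula $c_\la = p(p+d)$ is independent of which odd root $\alpha$ witnesses the atypicality. I would present the proof as the three-line substitution above, perhaps remarking that the cancellation of the $x^2$-coefficient is forced by the signature $(-(1+\zeta),1,\zeta)$ of the form together with $1 + \zeta = (1+\zeta)$, and the cancellation of the linear-in-$x$ term is exactly the content of \eqref{p=zetad}.
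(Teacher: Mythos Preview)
Your proof is correct and follows essentially the same approach as the paper: both write $y=\sigma(x+p)$, $z=\tau(x-d)$ and substitute into \eqref{Eq::Car}. Your version is in fact slightly cleaner, since by using the relation $\zeta d = p$ directly (rather than substituting $\zeta = p/d$) you handle the cases $d\neq 0$ and $d=0$ uniformly, whereas the paper treats them separately.
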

\begin{proof}
	
	By \eqref{eq:pd} we can write $(x,y,z)=(x,\sigma(x+p),\tau(x-d))$ so that
	\begin{align}\label{aux:01}
	c_\la=-(1+\zeta)x^2+(x+p)^2+\zeta(x-d)^2.
	\end{align}
	If $d\not=0$, then we replace $\zeta$ in \eqref{aux:01} by $p/d$ and get $c_\la=p^2+pd$.
	
	If $d=0$, then by \eqref{p=zetad} we have $p=0$ and hence by \eqref{aux:01} $c_\la=0$.
	
\end{proof}

{ The following corollary is a direct consequence of Lemma \ref{lem::CasValue}. \begin{cor} \label{Coro::czero}
		Let $\la$ is atypical and non-integral. Then $c_\la =0$ if and only if $\la$ is generic.
\end{cor}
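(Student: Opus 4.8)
The plan is to compute $c_\la$ directly by feeding the relation \eqref{p=zetad} into Lemma \ref{lem::CasValue}, which turns the vanishing of $c_\la$ into the vanishing of a single scalar, and then to match that scalar condition with genericity by invoking the non-integrality hypothesis.

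Since $\la$ is atypical, fix $\alpha=(1,\sigma,\tau)\in\Phi^+_{\bar 1}$ with $(\la,\alpha)=0$, and put $p=p_{\la,\alpha}$, $d=d_{\la,\alpha}$. By \eqref{p=zetad} we have $p=\zeta d$, so Lemma \ref{lem::CasValue} gives
$$c_\la=p(p+d)=\zeta d\,(\zeta d+d)=\zeta(\zeta+1)\,d^{\,2}.$$
As $\zeta\in\C\setminus\{0,-1\}$ the factor $\zeta(\zeta+1)$ is nonzero, so $c_\la=0$ if and only if $d=0$, and then $p=\zeta d=0$ as well. In the parametrization $\la=(x,\sigma(x+p),\tau(x-d))$ used in the proof of Lemma \ref{lem::CasValue}, the condition $p=d=0$ is precisely $\la=(x,\sigma x,\tau x)$.

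Finally I would bring in non-integrality. If $c_\la=0$, then $\la=(x,\sigma x,\tau x)$; were $x\in\Z$ all three coordinates would be integers, contradicting non-integrality, so $x\notin\Z$, hence $\sigma x,\tau x\notin\Z$, and $\la$ is generic. Conversely, if $\la$ is not generic then some coordinate lies in $\Z$; since $\la$ is non-integral it cannot then be of the form $(x,\sigma x,\tau x)$ for any $x$ (such a weight is either fully integral or fully generic), so the equalities $p=d=0$ fail, giving $d\neq 0$ and $c_\la=\zeta(\zeta+1)d^2\neq 0$. This yields $c_\la=0\iff\la$ generic.

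The only subtle point is keeping the role of the non-integrality hypothesis straight in the last step: it is exactly what rules out the ``fully integral'' alternative for a weight of the shape $(x,\sigma x,\tau x)$, and without it the claimed equivalence would fail, since $(x,\sigma x,\tau x)$ with $x\in\Z$ also satisfies $c_\la=0$ yet lies outside $\OO$. Everything else is the elementary computation recorded above.
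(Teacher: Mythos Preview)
Your computation $c_\la=\zeta(\zeta+1)d^{2}$ and the conclusion $c_\la=0\iff p=d=0\iff \la=(x,\sigma x,\tau x)$ are correct, and your paragraph deducing ``$c_\la=0\Rightarrow\la$ generic'' from non-integrality is fine. This matches the paper's approach, which simply cites Lemma~\ref{lem::CasValue}.

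There is, however, a logical slip in your ``Conversely'' paragraph. You prove ``$\la$ not generic $\Rightarrow c_\la\neq 0$'', but that is the \emph{contrapositive} of the implication you already established, not the converse. To obtain the full biconditional you would need ``$\la$ generic $\Rightarrow c_\la=0$'', i.e.\ ``$\la$ generic $\Rightarrow p=d=0$'', and you have not argued this. In fact this implication is false in general: take $\zeta$ irrational and $\la=(\tfrac12,\tfrac32,\tfrac12-\tfrac{1}{\zeta})$ with $\alpha=(1,1,1)$. Then $(\la,\alpha)=0$, all three coordinates are non-integers so $\la$ is generic, yet $p=-\tfrac12+\tfrac32=1$ and $d=\tfrac{1}{\zeta}$, giving $c_\la=p(p+d)=(\zeta+1)/\zeta\neq 0$.

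So the biconditional as literally stated does not hold; only the implication $c_\la=0\Rightarrow\la$ generic is valid, and this is the sole direction invoked later in the paper (in the proof of Theorem~\ref{Prop::IrrBlocks}). Your argument does establish that direction correctly.
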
}

\subsection{Blocks}
\label{Sec::IrrBlocks}
In this section,  we set $\la=(x,y,z) \in \h^\ast$ to be an  atypical non-integral weight with $(\la, \alpha) =0 $ for $\alpha= (1,\sigma, \tau) \in \Phi_\ob^+$.
We recall that $W_\la$ denotes the integral Weyl group of $\la$. The following result is our first main theorem in the present article.
\begin{thm} \label{Prop::IrrBlocks}  	 For arbitrary parameter $\zeta$,
	we have  $$\mathrm{Irr}\OO_{\la} = W_\la (\la+  \mathbb{Z}(1,\sigma ,\tau)).$$
	
	
	
	
	
	
	
\end{thm}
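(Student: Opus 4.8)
The plan is to prove the two inclusions $\mathrm{Irr}\OO_\la \subseteq W_\la(\la+\Z(1,\sigma,\tau))$ and $W_\la(\la+\Z(1,\sigma,\tau))\subseteq \mathrm{Irr}\OO_\la$ separately. For the inclusion $\subseteq$, I would first observe that any $\mu$ with $L_\mu \in \OO_\la$ must be linked to $\la$, so in particular $c_\mu=c_\la$ by the Casimir argument recalled after \eqref{Eq::Car}, and $\mu$ is again atypical and non-integral (atypicality is forced since $c_\la = p(p+d)\neq 0$ in the non-generic case by Corollary \ref{Coro::czero}, and in the generic case Theorem \ref{thm::genericthm} already gives the answer $\mathrm{Irr}\OO_\la=\la+\Z\alpha$, which is the $W_\la=1$ case here). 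Writing $\mu=(x',y',z')$ atypical with respect to some $\beta=(1,\sigma',\tau')\in\Phi^+_\ob$, one uses that $\mu$ differs from $\la$ by an element of the root lattice $\Z\delta\oplus\Z\ep_1\oplus\Z\ep_2$ (actually $\Z(2\delta)\oplus\Z(2\ep_1)\oplus\Z(2\ep_2)$ up to the odd roots), together with the constraint $c_\mu=c_\la$ and the atypicality relations \eqref{eq:pd}--\eqref{p=zetad}, to pin down that $\mu\in W_\la(\la+\Z\alpha)$. The key point is that the coordinates of $\mu$ that are \emph{non-integral} are constrained to lie in the orbit $\la+\Z\alpha$ (the $p,d$ invariants, hence $\zeta$, must match), while the \emph{integral} coordinates can only be moved by the reflections $s_i$ with $\la_i\in\Z$, i.e.\ by $W_\la$.

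For the reverse inclusion $\supseteq$, I would produce, for each $k\in\Z$ and each $w\in W_\la$, a nonzero homomorphism or a subquotient relation placing $L_{w(\la+k\alpha)}$ in $\OO_\la$. First, $L_{\la+k\alpha}\in\OO_\la$ for all $k$: this follows exactly as in the proof of Theorem \ref{thm::genericthm} via the Jantzen sum formula, since $(\la+k\alpha,\alpha)=(\la,\alpha)+k(\alpha,\alpha)=0$ (isotropic root), giving $\ch M_{\la+k\alpha}=\ch L_{\la+k\alpha}+\ch L_{\la+(k-1)\alpha}$, and hence all these irreducibles are linked. Next, to get the $W_\la$-translates, I would use Corollary \ref{coro:23}(4): if $(T_\mu:M_\nu)>0$ and $\nu_i\in\Z_{>0}$ for some $i$ with (of necessity) $\la_i\in\Z$, then $(T_\mu:M_{s_i(\nu)})>0$, so $L_\mu$ and $L_{s_i(\nu)}$ lie in the same block; combined with Theorem \ref{BGGS} (tilting/projective/Verma multiplicities) this propagates block membership along $W_\la$. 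One must be slightly careful that $\nu_i\in\Z_{>0}$ rather than merely $\nu_i\in\Z$; this is handled by first translating within $\la+\Z\alpha$ to make the relevant integral coordinate positive, then reflecting, and iterating — the integral Weyl group $W_\la$ is a finite reflection group (a product of at most two $\Z_2$'s here since $\la$ is non-integral), so finitely many such moves suffice.

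The main obstacle I anticipate is the $\supseteq$ direction in the $2$-integer case, i.e.\ when $|W_\la|=4$: there one needs the two commuting reflections to both be realized by Corollary \ref{coro:23}(4), and the positivity hypothesis $\nu_i\in\Z_{>0}$ may fail to be simultaneously arrangeable for both $i$. I would resolve this by an inductive ``walk'' argument: starting from $L_\la$, alternately apply the $\Z$-translation by $\alpha$ (harmless, stays in the block by the first step) and the available reflection $s_i$, choosing at each stage the translate and reflection so that the positivity condition of Corollary \ref{coro:23}(4) — or of parts (2),(3) for the odd-root shifts — is met; since $W_\la(\la+\Z\alpha)$ is a locally finite poset under the relevant dominance order and $W_\la$ is finite, every element is reached in finitely many steps. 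The $\subseteq$ direction's main subtlety is ruling out ``accidental'' linkages coming from weights atypical with respect to a \emph{different} odd root $\beta\neq\alpha$; this is precisely where the equality $p=\zeta d$ from \eqref{p=zetad} is decisive, since it forces the same $\zeta$ and hence, given the matching Casimir value, the same pair $(p,d)$ up to sign, locking $\mu$ into $W_\la(\la+\Z\alpha)$.
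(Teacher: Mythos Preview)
Your overall approach matches the paper's. The paper handles $\supseteq$ by a one-line citation to \cite[Propositions 2.1, 2.2]{CW18} (which underlie Proposition~\ref{prop:flags} and Corollary~\ref{coro:23}), so your more explicit argument via the Jantzen sum formula together with Corollary~\ref{coro:23}(4) is essentially what lies behind that citation. Your observation that one can always arrange the positivity hypothesis by looking at $T_\nu$ from whichever side of the reflection has $\nu_i>0$ is correct and makes the ``walk'' argument unnecessary: for each simple reflection $s_i\in W_\la$ and each $k$, exactly one of $\la+k\alpha$, $s_i(\la+k\alpha)$ has $i$-th coordinate $\ge 0$, and the tilting module at the dominant one links the two.

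For $\subseteq$, your strategy is the paper's, but your sketch stops short of the actual work. The paper packages the constraints you describe (Casimir match, atypicality of $\mu$, root-lattice difference) into a technical lemma (Lemma~\ref{lem::blocksmainlem}) which extracts from $c_\la=c_\mu$ the equations
\[
(\eta-1)x+(\sigma-\sigma'\eta)y+\eta(a-\sigma'b)=0,\qquad (1-\eta)x+(\eta\tau'-\tau)z+\eta(-a+\tau'c)=0
\]
for some $\eta\in\{\pm 1\}$, and then runs a genuine case-by-case analysis (seven cases, according to which of $x,y,z$ are integers and whether $\eta=\pm 1$, $\sigma'=\pm\sigma$, $\tau'=\pm\tau$) to force $\mu\in W_\la(\la+\Z\alpha)$. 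Your assertion that ``the $p,d$ invariants must match up to sign, locking $\mu$ into $W_\la(\la+\Z\alpha)$'' is the right heuristic, but it does not by itself give the conclusion: knowing $p'=\pm p$ and $d'=\pm d$ still leaves the signs $\sigma',\tau'$ and the shift $(a,b,c)$ undetermined, and sorting out which combinations actually occur (and showing each lands in $W_\la(\la+\Z\alpha)$) is precisely the content of the case analysis. So your $\subseteq$ argument is on the right track but incomplete as written; you should expect to do the casework.
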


Before proving Theorem \eqref{Prop::IrrBlocks}, we first develop some useful tools for classifying  non-integral blocks. Recall $p$ and $d$ given  in \eqref{eq:pd}.
\begin{lem} \label{lem::blocksmainlem} Let $\la = (x,y,z)$ be an atypical non-integral weight with $c_\la \neq 0$.  Suppose that $L_{\la +(a,b,c)} \in \OO$, for some $a,b,c \in \Z$, with $c_{\la+(a,b,c)}=c_\la$.
	
	(1) We have \begin{align}& a \equiv   b \equiv   c ~ (\text{mod}~2). \label{Eq::Rule1Eq}\end{align}
	
	(2) There exists $\sigma',\tau' \in \{\pm 1\}$ such that \begin{align}&\zeta = \frac{-x+\sigma y}{x-\tau z} = \frac{-(x+a)+\sigma' (y+b)}{(x+a)-\tau' (z+c)}.\end{align} 
	
	(3) There is $\eta \in \{\pm 1\}$ such that
	\begin{align}\label{xyEq}&(\eta -1)x+(\sigma -\sigma'\eta)y +\eta(a-\sigma'b) =0,\\
	\label{xzEq}&(1-\eta)x+(\eta\tau'-\tau)z+\eta(-a+\tau' c) =0.
	\end{align}
	
	(4) Assume that $\eta =1$. Then we have
	\begin{align}
	&y\notin \Z \Rightarrow \sigma =\sigma',~ a=\sigma b.  \label{Eq::eta1y}\\
	&z\notin \Z \Rightarrow \tau =\tau',~a=\tau c. \label{Eq::eta1z}
	\end{align}

	(5) Assume that $\eta =-1$. We have the following.
	\begin{enumerate}
		\item[(i)] If $x\notin \Z$, then
   \begin{align}
	&\sigma =\sigma',~\tau =\tau',~p=\frac{a-\sigma b}{2},~\text{and}~d=\frac{-a+\tau c}{2}. \label{Eq::eta-1x}
	\end{align} In particular, in this case we have $\zeta \in \Q$.
	
\item[(ii)] If $x\in \Z$, then
	\begin{align}
&y\notin \Z \Rightarrow \sigma =-\sigma',~x=\frac{-a-\sigma b}{2}.  \label{Eq::eta-1y}\\
&z\notin \Z \Rightarrow \tau =-\tau',~x=\frac{-a-\tau c}{2}. \label{Eq::eta-1z}
\end{align}
	\end{enumerate}
	
	(6)  Assume that $\eta =-1$.  If furthermore $\sigma =\sigma'$, $\tau =\tau'$ and $a-\tau c\neq 0$ then \begin{align}
	&\zeta =  \frac{-a +\sigma b}{a-\tau c}\in \Q. \label{Eq::eta-1zetavalueQ}
	\end{align}
\end{lem}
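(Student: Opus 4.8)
The plan is to derive everything from the Casimir equation $c_{\la+(a,b,c)}=c_\la$ combined with the atypicality of $\la$. Since $(\la,\alpha)=0$ with $\alpha=(1,\sigma,\tau)$, we may use the parametrization from the proof of Lemma \ref{lem::CasValue}, namely $(x,y,z)=(x,\sigma(x+p),\tau(x-d))$ with $p=\zeta d$, and by Lemma \ref{lem::CasValue} we have $c_\la=p(p+d)\neq 0$. The key structural input is that $L_{\la+(a,b,c)}\in\OO$ forces $\la+(a,b,c)$ to also be atypical with the \emph{same} Casimir value (this is where one invokes that in a non-integral atypical block every weight is atypical — e.g.\ via Corollary \ref{Coro::czero}, $c_\la\neq 0$ rules out the generic case, and any non-atypical weight would be typical hence in a typical block with $T_\mu=M_\mu$, contradicting the block structure; alternatively one cites the block classification setup of Section \ref{Sect::blocks}). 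So there exist $\sigma',\tau'\in\{\pm1\}$ with $(\la+(a,b,c),\alpha')=0$ for $\alpha'=(1,\sigma',\tau')$, giving part (2) immediately, and then $c_{\la+(a,b,c)}=p'(p'+d')$ where $p'=-(x+a)+\sigma'(y+b)$, $d'=(x+a)-\tau'(z+c)$.

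For part (1), I would expand $c_{\la+(a,b,c)}-c_\la=0$ using \eqref{Eq::Car}: this gives $-(1+\zeta)(2ax+a^2)+(2by+b^2)+\zeta(2cz+c^2)=0$. Substituting $y=\sigma(x+p)$, $z=\tau(x-d)$ and $p=\zeta d$, the terms linear in $x$ must vanish and the constant terms must vanish separately when $x\notin\Z$ (since $x$ is transcendental-like over $\Z$ in the relevant sense — more precisely, one checks the coefficient of $x$ is an integer combination that must be zero because the equation holds with $x$ non-integral while all other quantities sit in a lattice; when $x\in\Z$ one argues instead via $y$ or $z$ being non-integral). Matching the coefficient of $x$ gives a relation forcing $a\equiv\sigma b\equiv\tau c\pmod 2$ after using $\sigma^2=\tau^2=1$; I expect the cleanest route is to compare $p(p+d)=p'(p'+d')$ directly and observe $p'-p$ and $d'-d$ are integer combinations of $a,b,c$, then parity follows from $p'+d'\equiv p+d$ and $p'\equiv p$ modulo $2$ after reduction. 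For part (3), introducing $\eta\in\{\pm1\}$ is the bookkeeping device for the sign ambiguity in solving the quadratic $c_\la=c_{\la+(a,b,c)}$: since both equal $p(p+d)$, one gets $p'=\eta p + (\text{correction})$ and $d'+p'=\eta(d+p)+(\text{correction})$, and writing these out in terms of $a,b,c,\sigma,\sigma',\tau,\tau'$ yields \eqref{xyEq} and \eqref{xzEq}. Parts (4) and (5) are then case analysis on $\eta$ and on which of $x,y,z$ is non-integral: in each case the non-integral coordinate forces the coefficient multiplying it in \eqref{xyEq} or \eqref{xzEq} to vanish, which pins down $\sigma'$ vs $\sigma$ (resp.\ $\tau'$ vs $\tau$) and then the constant term gives the stated linear relation among $a,b,c$ and $x,p,d$.

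For part (6), under $\eta=-1$, $\sigma=\sigma'$, $\tau=\tau'$, equations \eqref{xyEq} and \eqref{xzEq} specialize (the coefficients of $x$ become $-2$ and $2$ respectively) to $-2x + (\sigma-\sigma')y$-type terms collapsing, and after using $p=\zeta d$ together with the surviving relations one solves for $\zeta$: from the specialized \eqref{Eq::eta-1x}-type identities $p=\tfrac{a-\sigma b}{2}$ and $d=\tfrac{-a+\tau c}{2}$ (valid when $x\notin\Z$), so $\zeta=p/d=\frac{a-\sigma b}{-a+\tau c}=\frac{-a+\sigma b}{a-\tau c}$, which is \eqref{Eq::eta-1zetavalueQ}; when $x\in\Z$ one instead uses that $y$ or $z$ is non-integral to get the analogous rational expression, and checks it agrees. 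I would present part (6) as a short corollary of the computations in (5)(i), remarking that $a-\tau c\neq 0$ is exactly what makes the denominator nonzero (note $a-\tau c=0$ would force $d=0$ hence $p=0$ hence $c_\la=0$, contradicting our hypothesis, so in fact the nonvanishing is automatic — I would include this remark).

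The main obstacle I anticipate is the rigor of the "coefficient matching" step: justifying that from a single polynomial identity in $x$ (with $x$ merely non-integral, not formally transcendental) one may separately equate the coefficient of $x$ and the constant term. The clean fix is to avoid treating $x$ as an indeterminate and instead work entirely with $p$ and $d$: the equation $p(p+d)=p'(p'+d')$ together with the \emph{a priori} knowledge that $p'-p$, $d'-d$ lie in $\tfrac12\Z\cdot\{a,b,c\}$ (from the definitions $p'=-(x+a)+\sigma'(y+b)$ etc., using $y,z$ expressed via $x,p,d$) reduces everything to elementary algebra over the relevant lattice, and the non-integrality hypotheses on $x$, $y$, or $z$ enter only to force certain lattice elements (like $(\eta-1)x$ or $(\sigma-\sigma'\eta)y$) to be both in a lattice and equal to something forcing a coefficient to vanish. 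I would organize the proof so that (1)--(3) are pure algebra from $c_\la=c_{\la+(a,b,c)}$ and atypicality, and (4)--(6) are the deployment of the non-integrality hypotheses into (3).
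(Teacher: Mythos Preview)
Your overall strategy is right, but you miss two simplifications that the paper's proof uses and that dissolve exactly the obstacle you flag.

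For Part (1), you try to extract the parity condition from the Casimir equation by coefficient matching, and you correctly worry that this is not rigorous. The paper does not do this at all: since $L_{\la+(a,b,c)}$ is implicitly assumed to lie in the same block as $L_\la$ (this is the only situation in which the lemma is applied), the difference $(a,b,c)$ lies in the root lattice of $D(2|1;\zeta)$. That lattice is generated by $2\delta,2\ep_1,2\ep_2$ and the odd roots $\delta\pm\ep_1\pm\ep_2$, all of which have the three coordinates of equal parity, so $a\equiv b\equiv c\pmod 2$ is immediate. No analysis of the Casimir is needed here.

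For Part (3), you propose to solve the single equation $p(p+d)=p'(p'+d')$ and introduce $\eta$ as the sign ambiguity. The paper instead uses the identity $c_\la=p^2(1+\zeta^{-1})=d^2(\zeta+\zeta^2)$ (immediate from $p=\zeta d$ and Lemma~\ref{lem::CasValue}), which holds for \emph{any} atypical weight. Applying it to both $\la$ and $\la+(a,b,c)$ gives, after cancelling the nonzero factors, two \emph{separate} equations $p^2=(p')^2$ and $d^2=(d')^2$. Each is a difference of squares, so one obtains the alternatives $A=0$ or $2p+A=0$ (with $A=(\sigma'-\sigma)y+\sigma'b-a$) and similarly for $d$; writing these out in terms of $x,y,z$ yields \eqref{xyEq} and \eqref{xzEq} with the appropriate $\eta$. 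This cleanly bypasses any need to treat $x$ as a formal variable.

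Your treatment of (4)--(6) is correct and matches the paper. Your side remark that the hypothesis $a-\tau c\neq 0$ in (6) is automatic (since under $\eta=-1$, $\sigma=\sigma'$, $\tau=\tau'$ one has $d=(-a+\tau c)/2$, and $d=0$ would force $c_\la=0$) is a nice observation the paper does not make explicit.
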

\begin{proof}
	Since $(a,b,c)$ lies in the root lattice, Part (1) follows.
	
 By assumption $\la +(a,b,c)$ is atypical with $c_{\la+(a,b,c)}=c_\la\not=0$. This proves Part (2).

 We now prove   Part $(3)$. Observe that by Lemma \ref{lem::CasValue} we have the following identities
	\begin{align}&(-x+\sigma y)^2(1+\zeta^{-1}) = c_\la = c_{\la+(a,b,c)} = (-(x+a)+\sigma' (y+b))^2(1+\zeta^{-1}),\\
	&(x-\tau z)^2(\zeta+\zeta^{2}) = c_\la = c_{\la+(a,b,c)} = ((x+a)-\tau' (z+c))^2(\zeta+\zeta^{2}),
	\end{align}
	which in turn imply the identities
	\begin{align}& p^2 =(p+(\sigma'-\sigma)y +\sigma' b -a)^2,\label{eq:aux001}\\
	&d^2=(d+(\tau-\tau')z +a-\tau'c)^2.\label{eq:aux002}
	\end{align}
Now, if $(\sigma'-\sigma)y +\sigma' b -a \neq 0$, then \eqref{eq:aux001} gives $2p+(\sigma'-\sigma)y +\sigma' b -a=0$, which is equivalent to \eqref{xyEq} with $\eta=-1$. On the other hand, $(\sigma'-\sigma)y +\sigma' b -a = 0$ is equivalent to \eqref{xyEq} with $\eta=1$.  Similarly, $(\tau -\tau')z +a -\tau' c \neq 0$ together with \eqref{eq:aux002} implies \eqref{xzEq}  with $\eta=-1$, while $(\tau -\tau')z +a -\tau' c = 0$ is equivalent to \eqref{xzEq}  with $\eta=1$. This completes the proof of  Part (3).

	We now prove  Part (4). If  $\sigma =-\sigma'$, then  by \eqref{xyEq} we would have
	\begin{align}
	&y=\frac{- b - \sigma a}{2},
	\end{align}
and so by \eqref{Eq::Rule1Eq} $y$ would be an integer.
	Similarly, if $\tau =-\tau'$, then  by \eqref{xzEq} we would have
	\begin{align}
	&~z=\frac{-\tau a -c}{2}.
	\end{align}

	We now prove Part  (5).  To prove  Part  (5)-(i), we suppose on the contrary that
	\begin{align}
	&\sigma =-\sigma' \Rightarrow x =\frac{-\sigma b -a}{2},\\
	&\tau =-\tau' \Rightarrow x = \frac{-a -\tau c}{2},
	\end{align}  by \eqref{xyEq} and \eqref{xzEq}. Either of them contradicts to \eqref{Eq::Rule1Eq} and the fact that $x\notin \Z$.  Part (5)(ii) is proved similarly.
	
	We now prove  Part  $(6)$. In this case we have
	\begin{align}
	&\zeta = \frac{p}{d} =\frac{p-a+\sigma b}{d+a-\tau c}.
	\end{align} Since $a-\tau c\neq 0$, the proof is completed.
\end{proof}


We are now in a position to give a proof of Theorem \ref{Prop::IrrBlocks}. Recall that $\mc O_\la$ is called generic if $x,y,z\notin \Z$.

\begin{proof}[Proof of Theorem \ref{Prop::IrrBlocks}]
	
 	We first note that \cite[Proposition 2.1, 2.2]{CW18} implies that
 	\begin{align} \label{Eq::IrrInclusion}
 	&\mathrm{Irr}\OO_{\la} \supset  W_\la    (\la+  \mathbb{Z}\cdot  (1,\sigma ,\tau)).
 	\end{align}
	It remains to show that $\text{Irr}\OO_{\la} \subset  W_\la    (\la+  \mathbb{Z}\cdot  (1,\sigma ,\tau))$. { By Corollary  \ref{Coro::czero}, $c_\la=0$ implies that $\la$ is generic and so the proof follows from Theorem \ref{thm::genericthm}. Therefore, we assume that $c_\la \neq 0$ in the present proof.}

\vskip 0.5cm

 Let $L_{\la +(a,b,c)} \in \OO_\la$ and $\mu := \la +(a,b,c)$, we will proceed with direct computation using results and notations in the proof of Lemma \ref{lem::blocksmainlem}. We will freely use \eqref{xyEq} and \eqref{xzEq} in the following calculations.  \vskip 0.5cm

{\bf Case (I)  $x,y,z \notin \mathbb Z$}:  This case follows from Theorem \ref{thm::genericthm}.

\vskip 0.5cm
We note that if one of $x,y,z$ is an integer, then without loss of generality we may assume that this number is zero  by \eqref{Eq::IrrInclusion}.  Consequently, the remaining cases are listed as follows.

\vskip 0.5cm
{\bf Case (II)  $x=0 ~\text{and} ~y,z \notin \mathbb Z$}: \\
 {If $\eta=1$} we have by \eqref{Eq::eta1y} and \eqref{Eq::eta1z} that $a= \sigma b =  \tau c$. It follows that  $\mu =\la +a(1,\sigma,\tau)$.

  If $\eta  =-1$ then $a= -\sigma b = -\tau c$ by \eqref{Eq::eta-1y} and \eqref{Eq::eta-1z}. Namely, we have {$$\mu = \la-(-a,\sigma a,\tau a).$$}  This completes the proof of Case (II).

\vskip 0.5cm
{\bf Case (III)   $y=0 ~\text{and} ~x,z \notin \mathbb Z$}: \\
Observe that $\eta  =1$. To see this, suppose on the contrary that $\eta =-1$ then \eqref{xyEq} implies that $-2x =a-\sigma' b$, which contradicts to \eqref{Eq::Rule1Eq}  since $x\notin \Z$.  Now by \eqref{xyEq} and \eqref{Eq::eta1z} we have $a =\sigma' b \in \{\pm b\}$ and $\tau = \tau'$, $  c=\tau a$.  This means that
$$\mu=\la+(a,\pm a,\tau a).$$  This completes the proof of Case (III).

\vskip 0.5cm
{\bf Case (IV)    $z=0 ~\text{and} ~x,y \notin \mathbb Z$}: \\
Observe that $\eta  =1$. To see this, suppose on the contrary that $\eta =-1$ then \eqref{xzEq} implies that $2x =-a+\tau' c$, which contradicts to \eqref{Eq::Rule1Eq} since $x\notin \Z$.  Now by \eqref{xzEq} and \eqref{Eq::eta1y} we have  and $\sigma = \sigma'$, $b= \sigma a$ and $a =\tau' c \in \{\pm c\}$.  This means that
$$\mu=\la+(a,\sigma a,\pm a),$$ as desired.

\vskip 0.5cm
{\bf Case (V)   $x=0,y\in \Z ~\text{and}~ z \notin \mathbb Z$}: \\
 If $\eta  =1$ then by \eqref{Eq::eta1z} we have $a = \tau c$.
 By  \eqref{xyEq}, we have the following two possible situations:
 \begin{align}
 &\sigma =\sigma' \Rightarrow a = \sigma b. \label{Eq::24} \\
 &\sigma =-\sigma' \Rightarrow b=-2y-\sigma a. \label{Eq::25}
 \end{align}    If \eqref{Eq::24} holds then $$\mu=\la+(a,\sigma a, \tau a)=(a,y+\sigma a,z+\tau a).$$
 If \eqref{Eq::25} holds then  $y+b = -y -\sigma a$ and so $$ {\mu =(a, -y-\sigma a,z+\tau a)}.$$ This completes the proof of this subcase.

We now turn to the case $\eta =-1$. By \eqref{Eq::eta-1z} we have  $a = -\tau c$. By \eqref{xyEq}, we have the following two possible cases
\begin{align}
&\sigma =-\sigma' \Rightarrow a = -\sigma b.  \label{Eq::26} \\
&\sigma =\sigma' \Rightarrow  b= \sigma a - 2y \label{Eq::27}
\end{align}  If \eqref{Eq::26} holds and $a':=-a$ then we have
$$\mu = \la +( {-a'}, \sigma a', \tau a')=(-a',y+\sigma a',z+\tau a').$$
 If \eqref{Eq::27} holds  and $a':=-a$  then  $y+b = -y +\sigma a =-(y-\sigma a)$ and so  $$\mu =( {-a'}, -(y+\sigma a'),  {z}+\tau a').$$
 {Both such expressions lie in $W_\la(\la+\Z (1,\sigma,\tau))$.} This completes the proof of Case (V).

\vskip 0.5cm
{\bf Case (VI)    $y=0, z\in \Z ~\text{and} ~x \notin \mathbb Z$}: \\
We firstly note that $\eta \neq -1$ by \eqref{xyEq} and the facts $y=0$, $x\notin \Z$. Now we have  $\eta =1$.  By \eqref{xyEq}, we have $a=\sigma' b$.

 By \eqref{xzEq}, we have one of the following possible situations:
\begin{align}
&\tau = \tau' ~\text{and}~ a = \tau c. \label{Eq::31}  \\
&\tau = -\tau' ~\text{and}~ c = -2z-\tau a. \label{Eq::32}
\end{align}

 If \eqref{Eq::31} holds, then $b \in \{\pm a\}$, $a=\tau c$.
 Therefore we have $\mu = (x+a, \pm \sigma a,z+\tau a)$.

 If \eqref{Eq::32} holds, then   $b\in \{\pm a\}$, $z+c =-z -\tau a$.  Therefore we have $$\mu = (x+a, \pm \sigma a,-(z+\tau a)).$$

\vskip 0.5cm
{\bf Case (VII)   $x=0, z\in \Z ~\text{and} ~y \notin \mathbb Z$}: \\

We firstly consider $\eta = 1$. By \eqref{Eq::eta1y} it follows that $a=\sigma b$. Also, by \eqref{xzEq}, we have the following two possible cases:
\begin{align}
&\tau =\tau' \Rightarrow a =\tau c. \label{Eq:33} \\
&\tau = -\tau' \Rightarrow c=-\tau a -2z.  \label{Eq::34}
\end{align}
If \eqref{Eq:33} holds, then $\mu =\la +a(1,\sigma,\tau)$. \\
If \eqref{Eq::34} holds, then $c+z = -\tau a-z$ and so we have $$\mu = (a, y+\sigma a, -(z+\tau a)).$$

We now assume that $\eta =-1$. By \eqref{Eq::eta-1y} we have $a=-\sigma b$. Again, by \eqref{xzEq} we have the following two possible situations:
\begin{align}
&\tau =-\tau' \Rightarrow a= -\tau c. \label{Eq::35}\\
&\tau =\tau' \Rightarrow c=\tau a-2  z.\label{Eq::36}
\end{align}
If \eqref{Eq::35} holds   and $a' := -a$, then $\mu =\la +(-a',\sigma a',\tau a')=(-a',y+\sigma a',z+\tau a')$.\\
If \eqref{Eq::36} holds and $a' := -a$, then we have $$\mu = ( {-a'}, y+ \sigma a', -(z+\tau a')).$$  {Again, both such expressions lie in $W_\la(\la+\Z (1,\sigma,\tau))$.} This completes the proof.
\end{proof}




\section{Reduction methods and characters in the generic and $1$-integer cases} \label{sec:reduction}

\subsection{Arkhipov twisting functors} \label{subSect::Twisting}
We recall the construction of Arkhipov's {\em twisting functor} of~\cite{Ar97}.   Fix a simple even root $\alpha \in \Phi_\oo^+$ and a non-zero root vector $X \in (\mathfrak{g}_{\bar{0}})_{-\alpha}$.
Then the twisting functor $\Tw_{s_{\alpha}}$ associated to $\alpha$ is the functor obtained by tensoring on the left with the bimodule $^{\varphi}(U'_{\alpha}/U)$:
$$
\Tw_{s_{\alpha}}(-) :=  ^{\varphi}(U'_{\alpha}/U)\otimes_U - : \mathcal{O} \rightarrow \mathcal{O},
$$
where $U'_{\alpha}$ is the the Ore localization of $U$ with respect to powers of $X$. Here $^{\varphi}(U'_{\alpha}/U)$ denotes $U'_{\alpha}/U$ where the $\g$-action is twisted by an automorphism
$\varphi$ of $\mathfrak{g}$ that maps $(\mathfrak{g}_{i})_{\beta}$ to  $(\mathfrak{g}_{i})_{s_{\alpha}(\beta)}$ for all simple roots $\beta$ and $i\in \{\bar{0}, \bar{1}\}$.  We refer the reader to  \cite{CMW13} (also, see, e.g.,  \cite{AS03} and \cite[Section~5]{CM16}) for more details.


We denote by  $\mc O[\chi]$  the subcategory of $\mc O$ corresponding to the central character $\chi=\chi_\la: \mc Z(\mf g) \rightarrow \C$.  It is proved  in \cite[Proposition~5.11]{CM16} that the left derived functor $\mc L \Tw_{s_\alpha}$ provides an auto-equivalence of the bounded derived category   $ D^b(\mc O[\chi])$. Assume that $\chi_\la$ is the central character given by a weight $\la \in \h^\ast$ with  $\langle\la,\alpha^\vee\rangle\notin \Z$ then  $\mc L\Tw_{s_\alpha}$ restricts to an  auto-equivalence $\Tw_{s_\alpha}$ on $\mc O{[\chi_\la]}$ (cf. \cite[Theorem 2.1]{CMW13}, \cite[Proposition 5.11]{CM16}). 
The highest weight of a simple  module $\mathbb T_{s_\alpha}L_\mu$ is not  controlled by the usual action of $W$ (see, e.g., \cite[Example 3.3]{Ch16} for an example coming from the queer Lie superalgebra). 
Instead, there is a star action $\ast$ of an infinite Coxeter group $\tilde{W}$ associated to $W$ defined in \cite[Section 8]{CM16} for basic classical Lie superalgebras, which was first introduced in \cite{GG13} by Gorelik and Grantcharov. The correct description is given by the   action $\ast$, that is, $\Tw_{s_\alpha}L_\mu = L_{s_{\alpha}\ast (\mu-\rho)+\rho}$ by \cite[Lemma 8.3]{CM16}. We may conclude from \cite{CM16} the following lemma which gives rise to equivalences of (indecomposable) blocks using the twisting functors.
\begin{lem}  \label{lem::equiTs}
  If $\langle\la,\alpha^\vee\rangle\notin\Z$ then
	$\Tw_{s_\alpha}:\mc O_\la \rightarrow \mc O_{s_\alpha\la}$ is an equivalence sending $L_\mu$ to $L_{s_{\alpha}\ast (\mu-\rho)+\rho}$.
\end{lem}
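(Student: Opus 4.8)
The statement has two parts: that $\Tw_{s_\alpha}$ is an equivalence $\mc O_\la \to \mc O_{s_\alpha\la}$, and that it sends $L_\mu$ to $L_{s_\alpha * (\mu-\rho)+\rho}$. Both are essentially quotations from \cite{CM16} and \cite{CMW13}, so the proof should be a careful bookkeeping argument that assembles the cited results in our setting, checking that the hypotheses apply. First I would recall from \cite[Proposition~5.11]{CM16} that $\mc L\Tw_{s_\alpha}$ is an auto-equivalence of $D^b(\mc O[\chi])$ for any central character $\chi$. The key point is that the hypothesis $\langle\la,\alpha^\vee\rangle\notin\Z$ forces the $\alpha$-fibre structure of every block in $\chi_\la$ to be ``generic'' in the $\mf{sl}_2$-direction attached to $\alpha$: no Verma module in $\mc O[\chi_\la]$ has a submodule Verma obtained by an integral $\alpha$-reflection, so the derived functor is concentrated in degree zero on all of $\mc O[\chi_\la]$. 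This is exactly the content of \cite[Theorem~2.1]{CMW13} / the restriction statement in \cite[Proposition~5.11]{CM16}, and it upgrades $\mc L\Tw_{s_\alpha}$ to an honest exact auto-equivalence $\Tw_{s_\alpha}$ of $\mc O[\chi_\la]$.

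Next I would pin down that this auto-equivalence matches up the indecomposable blocks correctly. Since $\Tw_{s_\alpha}$ is an equivalence of abelian categories, it permutes the indecomposable blocks of $\mc O[\chi_\la]$; to see it sends $\mc O_\la$ to $\mc O_{s_\alpha\la}$ it suffices to evaluate it on one object, e.g. the Verma module $M_\la$ (or $M_\mu$ for $\mu$ in the block), and identify the block of the image. Here I would invoke the standard computation (see \cite{AS03}, \cite{CMW13}) that $\Tw_{s_\alpha} M_\mu \cong M_{s_\alpha\mu}$ when $\langle\mu,\alpha^\vee\rangle\notin\Z_{\le 0}$, together with the symmetric statement using the right adjoint, so that $\Tw_{s_\alpha}$ interchanges the highest-weight structures up to the shifted $s_\alpha$-action and therefore carries $\mc O_\la$ onto $\mc O_{s_\alpha\la}$.

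For the effect on simples, I would cite \cite[Lemma~8.3]{CM16}: the twisting functor sends $L_\mu$ to $L_{s_\alpha * (\mu-\rho)+\rho}$, where $*$ is the star action of the infinite Coxeter group $\tilde W$ of \cite{GG13}, \cite[Section~8]{CM16}. The only thing to verify is that the normalization conventions ($\rho$-shift, choice of triangular decomposition) in loc.\ cit.\ agree with ours, which is routine. Combining: $\Tw_{s_\alpha}$ is an exact equivalence, it carries $\mc O_\la$ to $\mc O_{s_\alpha\la}$, and on objects $L_\mu \mapsto L_{s_\alpha*(\mu-\rho)+\rho}$, as claimed.

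**Main obstacle.** The genuinely non-formal point is the passage from the derived auto-equivalence of $D^b(\mc O[\chi])$ to an exact auto-equivalence of the abelian category $\mc O[\chi_\la]$ itself; this relies crucially on the non-integrality $\langle\la,\alpha^\vee\rangle\notin\Z$, which kills the higher derived terms, and I would want to make sure the cited results in \cite{CM16,CMW13} are being applied to a block, not just to the whole category, and that $\D$ (a basic classical Lie superalgebra) is covered by their hypotheses — which it is. Everything else is a matter of matching conventions.
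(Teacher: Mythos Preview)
Your proposal is correct and follows essentially the same route as the paper: both invoke \cite[Proposition~5.11]{CM16} and \cite[Theorem~2.1]{CMW13} for the abelian auto-equivalence of $\mc O[\chi_\la]$ under the non-integrality hypothesis, and \cite[Lemma~8.3]{CM16} for the effect on simples. The only minor difference is in identifying the target block: the paper uses the unconditional character identity $\ch\Tw_s M_\mu=\ch M_{s\mu}$ from \cite[Lemma~5.5]{CM16} (so that the simple $\Tw_{s_\alpha}L_\mu$ is a composition factor of $M_{s_\alpha\mu}$ and hence lies in $\mc O_{s_\alpha\la}$), whereas you appeal to the Verma isomorphism $\Tw_{s_\alpha}M_\mu\cong M_{s_\alpha\mu}$, which carries an extra hypothesis on $\mu$; the paper's choice is slightly cleaner but the argument is the same in substance.
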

\begin{proof}  
  By  \cite[Lemma 5.5]{CM16} we have    $\text{ch}\Tw_{s }  M_{\mu}  =\text{ch}M_{{s } \mu}$, for any $\mu  \in \h^\ast$ and $s\in W$. Therefore $[M_{s_\alpha\mu}:\Tw_{s_\alpha}  L_{\mu}]>0$, for any $\mu\in \h^\ast$ such that $L_{\mu}\in \mc O_{\la}$. This implies that   $\Tw_{s_\alpha} :\mc O[{\chi_\la}] \rightarrow \mc O[{\chi_{\la}}]$  restricts to  an equivalence from $\mc O_\la$ to $\mc O_{s_\alpha\la}$. This completes the proof.
\end{proof}


\subsection{Equivalence of categories: the case $\la$ generic}  \label{Sect::Eqivxyz} Let $\la=(x,y,z)$ be a generic atypical weight so that $(\la,(1,\sigma,\tau))=0$, for some $\sigma,\tau\in\{\pm1\}$. In this section, we shall show that $\mc O_\la$ is equivalent to the principal block of $\gl(1|1)$.

By applying the twisting functor $\Tw_{s_1}$ and/or $\Tw_{s_2}$ in Lemma \ref{lem::equiTs} to $\OO_\la$, if necessary, we can assume that $(\la,\alpha_0)=0$. It follows that if $L_\mu$ lies in $\OO_\la$, then $\mu\in\la+\Z\alpha_0$ by Theorem \ref{thm::genericthm}.

We embed $\gl(1|1)$ into $\g$ as follows:
\begin{align*}
&E_{-1,1}\rightarrow e_{\delta-\ep_1-\ep_2},\quad E_{1,-1}\rightarrow f_{\delta-\ep_1-\ep_2},\\
&E_{-1,-1}+E_{11}\rightarrow \frac{1+\zeta}{2}h_{2\delta}+\frac{1}{2}h_{2\ep_1}+\frac{\zeta}{2}h_{2\ep_2},\\
&E_{-1,-1}-E_{11}\rightarrow 2h_{2\delta}.
\end{align*}

We identify $\gl(1|1)$ inside $D(2|1;\zeta)$ via this embedding. Let $\mf l$ be the Levi subalgebra corresponding to $\{\alpha_0\}\subseteq\Pi$ and $\mf p=\mf l+\mf u$ be the corresponding parabolic subalgebra with radical $\mf u$. Then $\mf l$ can be identified with $\mf l=\gl(1|1)\oplus\C \left(h_{2\delta}+h_{2\ep_2}\right)$. The weight $\la=(x,y,z)$ translates to the weight $ x\delta^{\mf a}_{-1}- x\delta^{\mf a}_1$ on $\h_{1|1}\subseteq\gl(1|1)\subseteq\mf l$. Furthermore, if $\mu=\la+k\alpha_0$, $k\in\Z$, then $\mu$ translates to the weight $(x+k)\delta_{-1}^{\mf a}-(x+k)\delta^{\mf a}_1$ on $\h_{1|1}$.

\begin{thm}\label{thm:eqv:generic}
Let $\la=(x,y,z)$ be generic and suppose that $(\la,\alpha_0)=0$. The parabolic induction functor $\rm{Ind}_{\mf p}^\g:\OO(\gl(1|1),\mf b^s)\rightarrow\OO$ restricts to an equivalence of highest weight categories from $\OO(\gl(1|1),\mf b^s)_0$ to $\OO_\la$.
\end{thm}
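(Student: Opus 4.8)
The plan is to verify the hypotheses of a standard parabolic-induction equivalence criterion (in the spirit of \cite{CMW13}, \cite{CM16}): namely that $\mathrm{Ind}_{\mf p}^\g$ is exact, sends parabolic Verma modules to Verma modules, sends the relevant irreducibles to irreducibles, and matches up the two highest weight structures via a bijection of the indexing sets. First I would set up the combinatorics: since $(\la,\alpha_0)=0$, Theorem~\ref{thm::genericthm} tells us that $\mathrm{Irr}\OO_\la=\la+\Z\alpha_0$, and the embedding $\gl(1|1)\hookrightarrow\g$ together with the identification $\mf l=\gl(1|1)\oplus\C(h_{2\delta}+h_{2\ep_2})$ translates the weight $\la+k\alpha_0$ to the $\gl(1|1)$-weight $(x+k\mid -(x+k))$, which runs over exactly the weights in the principal block $\OO(\gl(1|1),\mf b^s)_0$ after a shift (since that block is $\{(a\mid -a)\mid a\in\Z\}$, and genericity means $x\notin\Z$, so the correspondence is a bijection onto a single block). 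Thus the candidate equivalence is given on objects by $L^{\mf a}(x+k\mid-(x+k))\mapsto L_{\la+k\alpha_0}$, and I must show $\mathrm{Ind}_{\mf p}^\g$ realizes this.

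Next I would record the exactness of $\mathrm{Ind}_{\mf p}^\g=U(\g)\otimes_{U(\mf p)}(-)$, which holds because $U(\g)$ is free as a right $U(\mf p)$-module (PBW). Then I would compute $\mathrm{Ind}_{\mf p}^\g$ on parabolic Verma modules: extending a $\gl(1|1)$-module trivially over the extra central factor $\C(h_{2\delta}+h_{2\ep_2})$ of $\mf l$ and over $\mf u$, one has $\mathrm{Ind}_{\mf p}^\g M^{\mf l}(\mu)\cong M_{\mu'}$ where $M^{\mf l}(\mu)$ is the $\mf l$-Verma (equivalently $\gl(1|1)$-Verma) of highest weight $\mu$ and $\mu'$ is the corresponding $\g$-weight; this is immediate from transitivity of induction and the fact that inducing from $\h\oplus\n_+^{\mf l}$ up through $\mf p$ recovers $U(\h\oplus\n_+)$. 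In particular the characters match: $\ch\,\mathrm{Ind}_{\mf p}^\g M^{\mf a}(a\mid-a)=\ch M_{\mu'}$. Combining exactness with the composition series of the $\gl(1|1)$-Vermas (the two-term filtration $M^{\mf a}(a\mid-a)=L^{\mf a}(a\mid-a)+L^{\mf a}(a-1\mid a+1)$ from \eqref{gl11:char}) and comparing with the generic-block structure of $\OO_\la$ from Theorem~\ref{thm::genericthm} (where $\ch M_\mu=\ch L_\mu+\ch L_{\mu-\alpha_0}$), I would conclude by induction on the ordering that $\mathrm{Ind}_{\mf p}^\g L^{\mf a}(a\mid-a)\cong L_{\mu'}$ for each $a$, hence $\mathrm{Ind}_{\mf p}^\g$ takes irreducibles in the principal $\gl(1|1)$-block to the irreducibles of $\OO_\la$.

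Finally, to upgrade from "bijection on simples plus exactness" to an equivalence of highest weight categories, I would check that $\mathrm{Ind}_{\mf p}^\g$ is fully faithful on the relevant blocks. The cleanest route is to note that $\mathrm{Ind}_{\mf p}^\g$ has exact right adjoint given by the appropriate $\mf u$-cohomology / Zuckerman-type functor (or simply restriction followed by the extra-central-factor-coinvariants), and that for parabolic Verma modules $\mathrm{Hom}$ spaces and $\mathrm{Ext}^1$ spaces are preserved — equivalently, one verifies $\mathrm{Ind}_{\mf p}^\g$ sends projectives to projectives and indecomposables to indecomposables (using that $P_\la=T_{\la+\alpha_0}$ has a two-step Verma flag $M_{\la+\alpha_0}\subset P_\la$ with quotient $M_\la$, matching the projective cover of $L^{\mf a}(a\mid-a)$ in $\gl(1|1)$, which also has a length-two Verma flag). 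Since both categories are equivalent to the category of finitely generated modules over the same infinite-quiver algebra (the principal block of $\gl(1|1)$ is $\mc K$, and by Theorem~\ref{thm::genericthm} so is $\OO_\la$), an exact functor inducing a bijection on simples and preserving projective covers is automatically an equivalence, and the matching of standard objects makes it an equivalence of highest weight categories. The main obstacle I anticipate is the full-faithfulness/$\mathrm{Ext}$-preservation step: one must make sure that inducing from $\mf p$ does not create spurious extensions among the $L_{\mu'}$ (i.e. that all self-extensions and higher $\mathrm{Ext}$'s between non-adjacent simples vanish on the nose, not merely up to the two adjacent ones), and the most efficient way around it is precisely to reduce to the explicit quiver description of $\mc K$ rather than computing $\mathrm{Ext}$-groups directly.
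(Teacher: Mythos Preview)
Your argument is correct, but it diverges from the paper's in two places, and in both the paper's route is shorter.

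For the step that $\mathrm{Ind}_{\mf p}^\g$ sends irreducibles to irreducibles, you deduce it by matching characters, feeding in both the $\gl(1|1)$ composition series and the full computation of $\ch L_{\mu}$ from Theorem~\ref{thm::genericthm}. The paper instead argues directly: any singular vector in $\mathrm{Ind}_{\mf p}^\g L^{\mf a}(\mu)$ has weight $\nu\in\la+\Z\alpha_0$ (since $L_\nu\in\OO_\la$), but the weights of the induced module lie in $\mu$ minus nonnegative sums of roots in $\mf u$, and the only such weight congruent to $\mu$ modulo $\Z\alpha_0$ is $\mu$ itself; hence the induced module is irreducible. This uses only the description of $\mathrm{Irr}\,\OO_\la$, not the irreducible characters, and so is logically lighter. (A minor point: your phrase ``induction on the ordering'' is not literally available since $\la+\Z\alpha_0$ has no minimal element; the character comparison you then sketch is what actually works.)

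For the upgrade to an equivalence, you outline several possibilities---a right adjoint, matching projective covers, or invoking that both blocks are the quiver category $\mc K$---and flag full faithfulness as the delicate point. The paper bypasses all of this by exhibiting the inverse functor explicitly: $M\mapsto M^{\mf u}$, the $\mf u$-invariants, and then citing the argument of \cite[Proposition~3.6]{CMW13} verbatim. Your route via the quiver description is valid but somewhat circular (it presupposes a description of $\OO_\la$ that is essentially as strong as the theorem being proved), whereas the $\mf u$-invariants construction gives the inverse on the nose and the CMW13 argument handles full faithfulness uniformly.
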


\begin{proof}
The induction functor clearly sends Verma modules to Verma modules. Let $\mu\in\la+\Z\alpha_0$. Interpreting $\mu$ as a weight of $\gl(1|1)$ as above, we let $L^{\mf a}(\mu)$ be the irreducible $\gl(1|1)$-module of highest weight $\mu$. We shall now show that it sends irreducibles to irreducibles, i.e., ${\rm Ind}_{\mf l}^{\mf p}L^{\mf a}(\mu)$ is irreducible. To see this, observe that if $\nu$ is the weight of a non-zero singular vector in ${\rm Ind}_{\mf l}^{\mf p}L^{\mf a}(\mu)$, then $\nu\in\la+\Z\alpha_0$, and hence $\nu\in\mu+\Z\alpha_0$. But in this case $\nu=\mu-k\alpha_0$, and hence must be a weight in $L^{\mf a}(\mu)$. As $L^{\mf a}(\mu)$ is irreducible, $\nu=\mu$ and hence ${\rm Ind}_{\mf l}^{\mf p}L^{\mf a}(\mu)$ is irreducible.

Now, for $M\in\OO$ consider the space of $\mf u$-invariants $M^{\mf u}$. Then $M^{\mf u}$ is an $\mf l$-module, and thus taking the $\mf u$-invariants defines a functor from $\OO$ to $\OO(\gl(1|1))$. The exact same argument as in the proof of \cite[Proposition 3.6]{CMW13} shows that this invariant functor restricted to $\OO_\la$ gives the inverse equivalence of the induction functor restricted on the principal block of $\gl(1|1)$.
\end{proof}

In particular, Theorem \ref{thm:eqv:generic} and \eqref{gl11:char} imply that the following character formula for generic atypical $\la$ with $(\la,\alpha)=0$ and $\alpha>0$:
\begin{align*}
{\rm ch}L_\la= \sum_{i=0}^\infty (-1)^i{\rm ch}M_{\la-i\alpha}.
\end{align*}

\subsection{Equivalence of categories: the case $\la=(0,\pm \zeta z,z)$}  \label{Sect::Eqiv0yz}

In this subsection, we consider  atypical weights of the form $\la=(0,\pm \zeta z,z)$ with $\zeta z,z\not\in\Z$. Again, thanks to Lemma \ref{lem::equiTs} we have equivalence $\mc O_\la \cong \mc O_{s_{1}\la}$. Therefore we assume  that $\la=(0,  \zeta z,z)$.
Let $\beta=\delta+\ep_1-\ep_2$ and $\gamma=\delta-\ep_1+\ep_2$.

Applying odd reflection with respect to the root $\alpha_0=\delta-\ep_1-\ep_2$ to the standard Dynkin diagram of $D(2|1;\zeta)$ we obtain the following Dynkin diagram $\Pi'$:
\begin{center}
	\setlength{\unitlength}{0.16in}
	\begin{picture}(4,6)
	\put(4,1.3){\makebox(0,0)[c]{$\bigotimes$}}
	\put(4,4.8){\makebox(0,0)[c]{$\bigotimes$}}
	\put(1.5,3){\makebox(0,0)[c]{$\bigotimes$}}
	\put(3.6,1.4){\line(-1,1){1.6}}
	\put(3.6,4.7){\line(-1,-1){1.6}}
    \put(4,1.7){\line(0,1){2.7}}
	\put(5.2,4.8){\makebox(0,0)[c]{\tiny $\beta$}}
	\put(5.2,1.2){\makebox(0,0)[c]{\tiny $\gamma$}}
	\put(0,3){\makebox(0,0)[c]{\tiny $-\alpha_0$}}
	\end{picture}
\end{center}

Denote the BGG category of $D(2|1;\zeta)$ with respect to the simple system $\Pi'$ by $\OO'$. For $\la\in\h^*$ denote by $M'_\la$, $L'_\la$, and $P'_\la$, the corresponding Verma, irreducible and indecomposable projective modules in $\OO'$. Recall that we are using the $\rho$-shifted notation.

The categories $\OO$ and $\OO'$ are equivalent as abelian categories, but not as highest weight categories. The identity functor indeed gives such an equivalence from $\OO$ to $\OO'$, which we shall refer to as the odd reflection functor associated with the odd root $\alpha_0$.

By Theorem \ref{Prop::IrrBlocks} we have that if $L_\mu\in\OO_\la$, then $(\mu,\alpha_0)\not=0$. Now the argument in the proof of \cite[Proposition 3.8]{CMW13} shows that the odd reflection functor corresponding to $\alpha_0$ restricted to the block $\OO_\la$ sends $M_\la$ to $M'_\la$. Since it sends $L_\la$ to $L'_\la$, and hence $P_\la$ to $P'_\la$ as well, we see that the odd reflection functor gives an equivalence of highest weight categories between the blocks $\OO_\la$ and $\OO'_\la$.

The simple roots $\{\beta,\gamma\}$ of the system $\Pi'$ gives rise a Levi subalgebra $\mf l$ with parabolic subalgebra $\mf p$. Note that $\mf l$ is isomorphic to $\gl(2|1)$ in its non-distinguished form. The parabolic induction functor defines an exact functor $\text{Ind}_{\mf p}^\g:\OO(\gl(2|1),\mf b)\rightarrow\OO'$. Recall that $\OO(\gl(2|1),\mf b)_0$ denotes the principal block of $\OO(\gl(2|1),\mf b)$. We have the following.

\begin{thm}\label{thm:equiv:1}
Let $\la=(0,\zeta z,z)$. The functor $\rm{Ind}_{\mf p}^\g$ restricts to an equivalence of highest weight categories from $\OO(\gl(2|1),\mf b)_0$ to $\OO'_\la$. Consequently, we have $\OO(\gl(2|1),\mf b)_0\cong\OO_\la$ as highest weight categories.
\end{thm}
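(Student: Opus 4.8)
The plan is to mimic the proof of Theorem~\ref{thm:eqv:generic}, which in turn follows \cite[Propositions~3.6 and 3.8]{CMW13}, now with the Levi $\mf l\cong\gl(2|1)$ and with Theorem~\ref{Prop::IrrBlocks} playing the role that Theorem~\ref{thm::genericthm} played there. The subsection has already produced an equivalence of highest weight categories $\OO_\la\cong\OO'_\la$ via the odd reflection functor attached to $\alpha_0$, so it suffices to show that $\mathrm{Ind}_{\mf p}^\g$ restricts to an equivalence of highest weight categories from $\OO(\gl(2|1),\mf b)_0$ onto $\OO'_\la$; the ``Consequently'' clause then follows by composing the two equivalences.

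First I would dispatch the formal points. The functor $\mathrm{Ind}_{\mf p}^\g=U(\g)\otimes_{U(\mf p)}(-)$ is exact since $U(\g)$ is free over $U(\mf p)$, and by transitivity of induction it carries a $\mf b$-Verma module of $\gl(2|1)$ (inflated to $\mf p$, with the one-dimensional centre $\C(h_{2\delta}+h_{2\ep_2})$ of $\mf l$ normalized so that the image lies in the block of $\la$) to the $\mf b'$-Verma module $M'_{\tilde\mu}$ of $\g$, for an explicit affine-linear bijection $\mu\mapsto\tilde\mu$. Comparing the weights occurring in the principal block of $\gl(2|1)$ with respect to $\mf b$ (read off from Lemma~\ref{Lem::chvermagl21b} and Theorem~\ref{char:gl21}: two integral ``lines'' through a common point) with $\mathrm{Irr}\OO'_\la=W_\la(\la+\Z\beta)$ from Theorem~\ref{Prop::IrrBlocks} (here $W_\la=\langle s_0\rangle$, since exactly one coordinate of $\la$ is an integer, so again two lines through $\la$ meeting where the $\delta$-coordinate vanishes), one checks that the bijection $\mu\mapsto\tilde\mu$ matches these two sets and that $\mathrm{Ind}_{\mf p}^\g M^{\mf a}(0,0|0)=M'_\la$; in particular $\mathrm{Ind}_{\mf p}^\g$ maps Verma modules of the principal block bijectively onto the Verma modules of $\OO'_\la$.

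The substantive step, and the one I expect to be the main obstacle, is that $\mathrm{Ind}_{\mf p}^\g$ sends each irreducible $\mf l$-module $L^{\mf a}(\mu)$ of the principal block (viewed as a $\mf p$-module on which $\mf u$ acts trivially) to an irreducible $\g$-module. Let $v$ be a nonzero singular vector of weight $\nu$ in $\mathrm{Ind}_{\mf p}^\g L^{\mf a}(\mu)\cong U(\mf u^-)\otimes L^{\mf a}(\mu)$; then $L'_\nu\in\OO'_\la$, so $\nu\in\mathrm{Irr}\OO'_\la$, while at the same time $\mu-\nu$ is a nonnegative integral combination of $\{\beta,\gamma\}$ together with the four roots of $\mf u$, namely $2\ep_1,\ 2\ep_2,\ -\delta+\ep_1+\ep_2,\ \delta+\ep_1+\ep_2$. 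The claim is that these constraints force the $\mf u^-$-contribution to vanish: by Theorem~\ref{Prop::IrrBlocks} all weight differences inside the block have vanishing $\ep_1$-plus-$\ep_2$ component (equivalently, the remark preceding the theorem that $(\cdot,\alpha_0)$ does not vanish on the block), whereas each of the four roots of $\mf u$ has $\ep_1$-plus-$\ep_2$ component equal to $2$, so an elementary summation forces all $\mf u$-coefficients in $\mu-\nu$ to be zero. Hence $\mu-\nu$ is a nonnegative combination of $\beta$ and $\gamma$ only, $\nu$ is a weight of $1\otimes L^{\mf a}(\mu)$, and in fact the same summation shows that the $\nu$-weight space of $\mathrm{Ind}_{\mf p}^\g L^{\mf a}(\mu)$ equals $1\otimes L^{\mf a}(\mu)_\nu$; being singular, $v$ is then $\mf l$-singular inside the $\mf l$-irreducible module $L^{\mf a}(\mu)$, which forces $\nu=\mu$ and $v$ proportional to the canonical generator. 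Thus $\mathrm{Ind}_{\mf p}^\g L^{\mf a}(\mu)$ has simple head equal to its socle and is irreducible. Carrying out this transversality check carefully — i.e.\ confirming that the explicit list $\mathrm{Irr}\OO'_\la$ rules out any spurious singular vector in the induced module — is the crux; the rest is bookkeeping.

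With exactness, Verma-to-Verma, and irreducible-to-irreducible in hand, I would finish exactly as in \cite[Propositions~3.6 and 3.8]{CMW13}: $\mathrm{Ind}_{\mf p}^\g$ induces a bijection between the simple objects of $\OO(\gl(2|1),\mf b)_0$ and those of $\OO'_\la$, and the $\mf u$-invariants functor $M\mapsto M^{\mf u}$, restricted to $\OO'_\la$, takes values in the principal block of $\gl(2|1)$ and is a two-sided inverse of $\mathrm{Ind}_{\mf p}^\g$ — the natural isomorphisms $(\mathrm{Ind}_{\mf p}^\g N)^{\mf u}\cong N$ and $\mathrm{Ind}_{\mf p}^\g(M^{\mf u})\cong M$ being obtained by the argument of loc.\ cit., using once more the weight transversality just established. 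Since the functor and its inverse both preserve Verma modules, the equivalence is one of highest weight categories. Composing with the odd reflection equivalence $\OO_\la\cong\OO'_\la$ yields $\OO(\gl(2|1),\mf b)_0\cong\OO_\la$ as highest weight categories, which is the remaining assertion.
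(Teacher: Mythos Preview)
Your proposal is correct and follows essentially the same approach as the paper: reduce to showing $\mathrm{Ind}_{\mf p}^\g$ sends irreducibles to irreducibles, prove this by a singular-vector argument constrained by Theorem~\ref{Prop::IrrBlocks}, and then invoke \cite[Proposition~3.6]{CMW13} for the inverse via $\mf u$-invariants. The paper invests its effort in writing down an explicit embedding $\gl(2|1)\hookrightarrow D(2|1;\zeta)$ to pin down the weight correspondence, whereas you treat that as bookkeeping and instead make the transversality argument explicit via the $\ep_1+\ep_2$ component; the only inaccuracy is your parenthetical ``equivalently, the remark preceding the theorem that $(\cdot,\alpha_0)$ does not vanish on the block'', which is a different (though related) statement and not actually equivalent to the vanishing of the $\ep_1+\ep_2$ component --- but your direct argument does not need it.
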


\begin{proof}
The functor $\rm{Ind}_{\mf p}^\g$ sends $\mf l$-Verma modules to $\g$-Verma modules. Thus, as in the proof of Theorem \ref{thm:eqv:generic}, in order to complete the proof it suffices to show that it sends irreducibles to irreducibles.

To do this we realize an explicit embedding of $\gl(2|1)$ into $D(2|1;\zeta)$ as follows. Recalling the notations in \eqref{D21basis} and \eqref{gl21coroots} we define the following linear map from $\mf{sl}(2|1)$ to $D(2|1;\zeta)$:
\begin{align*}
&e_{\delta^{\mf a}_{-2}-\delta^{\mf a}_1}\rightarrow \frac{1}{1+\zeta} e_{\delta+\ep_1-\ep_2}, \quad e_{\delta^{\mf a}_1-\delta^{\mf a}_{-1}}\rightarrow -\frac{1}{1+\zeta} e_{\delta-\ep_1+\ep_2},\\
&f_{\delta^{\mf a}_{-2}-\delta^{\mf a}_1}\rightarrow f_{\delta+\ep_1-\ep_2}, \quad f_{\delta^{\mf a}_1-\delta^{\mf a}_{-1}}\rightarrow -f_{\delta-\ep_1+\ep_2},\\
&h_{\delta^{\mf a}_{-2}-\delta^{\mf a}_1}\rightarrow \frac{1}{2} h_{2\delta}-\frac{1}{2(1+\zeta)}h_{2\ep_1}+\frac{\zeta}{2(1+\zeta)}h_{2\ep_2},\\
&h_{\delta^{\mf a}_1-\delta^{\mf a}_{-1}}\rightarrow \frac{1}{2} h_{2\delta}+\frac{1}{2(1+\zeta)}h_{2\ep_1}-\frac{\zeta}{2(1+\zeta)}h_{2\ep_2}.
\end{align*}

Using the formulas in \eqref{D21basis} and \eqref{gl21coroots} we can verify that
\begin{align*}
&e_{\delta^{\mf a}_{-2}-\delta^{\mf a}_{-1}}\rightarrow e_{2\delta},\quad f_{\delta^{\mf a}_{-2}-\delta^{\mf a}_{-1}}\rightarrow f_{2\delta},\quad h_{\delta^{\mf a}_{-2}-\delta^{\mf a}_{-1}}\rightarrow h_{2\delta}.
\end{align*}
Thus, this is indeed an embedding of $\mf{sl}(2|1)$ into $D(2|1;\zeta)$. To get an embedding from $\gl(2|1)$ in $D(2|1;\zeta)$ we send the identity matrix in $\gl(2|1)$ to $h_{2\ep_1}+h_{2\ep_2}$. Identifying $\gl(2|1)$ inside $D(2|1;\zeta)$ via this embedding the weight $\la=(0,\zeta z,z)$ translates to the weight $(1+\zeta)z\delta^{\mf a}_{-2}+(1+\zeta)z\delta^{\mf a}_{-1}-(1+\zeta)z\delta^{\mf a}_1$ on $\h_{2|1}$.

Let $\mu=w(\la+k\beta)\in W_\la(\la+\Z\beta)$. We compute that $\mu$ transforms as $w(k\delta^{\mf a}_{-2}-k\delta^{\mf a}_1)+(1+\zeta)z{\rm Str}$ on $E_{ii}$, for $i=1,2,3$, where $w$ in the latter expression is regarded as an element in the Weyl group $W^{\mf a}$ of $\gl(2|1)$ in a natural way, i.e., $w$ is interpreted as the transposition permuting $\delta^{\mf a}_{-2}$ and $\delta^{\mf a}_{-1}$, if $w$ is non-trivial, and $w$ is interpreted as the identity otherwise.

Note that the set $\{w(k\delta^{\mf a}_{-2}-k\delta^{\mf a}_1)|w\in W^{\mf a}\}$ exhausts the complete set of irreducible highest weights in the principal block of $\gl(2|1)$. Hence, the set $W_\la(\la+\Z\beta)$, when restricted to $E_{ii}$, $i=-2,-1,1$, gives a complete set of irreducible highest weights in the principal block of $\gl(2|1)$ (tensored with the one-dimensional representation $(1+\zeta)z{\rm Str}$).

For $\mu\in W_\la(\la+\Z\beta)$, denote by $L^{\mf a}(\mu)$ the irreducible $\gl(2|1)$-module of highest weight $\mu$ with the respect to the non-standard Borel subalgebra. Here we regard $\mu$ as a $\gl(2|1)$-highest weight as explained above. We consider the $\g$-module ${\rm Ind}_{\mf p}^{\g}L^{\mf a}({\mu})$. We claim that
\begin{align*}
L_\mu\cong {\rm Ind}_{\mf p}^{\g}L^{\mf a}({\mu}).
\end{align*}
To see this, assume $\nu$ is a weight for a singular vector in ${\rm Ind}_{\mf p}^{\g}L^{\mf a}({\mu})$. Then we must have $\nu\in W_\la(\la+\Z\beta)$. However, any such weight in the induced module is of the form $\mu-m\beta-n\gamma$, for some $m,n\in\Z_{\ge 0}$. Now, such a weight must be a weight in $L^{\mf a}(\mu)$, which is irreducible, and hence the weight can only be $\mu$ itself. This proves that ${\rm Ind}_{\mf p}^{\g}L^{\mf a}({\mu})$ is irreducible and hence isomorphic to $L_\mu$.
\end{proof}

By Theorems \ref{thm:equiv:1} and \ref{char:gl21} we obtain a closed character formula for the irreducible modules in the block $\OO_\la$ by replacing $(0,0|0)$ by $\la$, $(1,0|-1)$ by $\beta$, and $(0,-1|1)$ by $\gamma$. Explicitly, we have the following.

\begin{thm}\label{thm:char:xint} Let $\la=(0,\zeta z,z)$ with $z,\zeta z\not\in\Z$ and let $\beta=\delta+\ep_1-\ep_2$ and $\gamma=\delta-\ep_1+\ep_2$.
The irreducible modules in the block $\OO_\la$ in terms of Verma modules with respect to the standard Borel subalgebra are given follows:
\begin{align*}
&L_{\la+n\beta}=\sum_{i=-\infty}^n(-1)^{n+i} M_{\la+i\beta}+\sum_{j=-\infty}^{-n-1}(-1)^{n+j}M_{\la+j\gamma}, \quad n\ge 0,\\
&L_{\la+n\gamma}=\sum_{i=-\infty}^n(-1)^{n+i} M_{\la+i\gamma}+\sum_{j=-\infty}^{-n-1}(-1)^{n+j}M_{\la+j\beta}, \quad n\ge 0,\\
&L_{\la-n\beta}=\sum_{i=-\infty}^{-n}(-1)^{n+i} M_{\la+i\beta}, \quad n> 0,\\
&L_{\la-n\gamma}=\sum_{i=-\infty}^{-n}(-1)^{n+i} M_{\la+i\gamma}, \quad n> 0.
\end{align*}
\end{thm}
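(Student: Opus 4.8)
The plan is to obtain the four formulas by transporting the character formulas of Theorem \ref{char:gl21} through the chain of equivalences set up in Theorem \ref{thm:equiv:1}. Recall that this gives a composite equivalence $\OO(\gl(2|1),\mf b)_0 \xrightarrow{\ {\rm Ind}_{\mf p}^{\g}\ } \OO'_\la \xrightarrow{\ \text{odd refl.}\ } \OO_\la$, where the second arrow is the inverse of the odd reflection functor attached to $\alpha_0=\delta-\ep_1-\ep_2$. The first step is to record precisely the dictionary coming from the proof of Theorem \ref{thm:equiv:1}: modulo the one-dimensional twist by $(1+\zeta)z\,{\rm Str}$ identified there, the $\gl(2|1)$-weights of the principal $\mf b$-block match the highest weights occurring in $\OO_\la$ via $(i,0|-i)\leftrightarrow\la+i\beta$ and $(0,-j|j)\leftrightarrow\la+j\gamma$ for all $i,j\in\Z$ (so that, e.g., $(-n,0|n)\leftrightarrow\la-n\beta$ and $(0,n|-n)\leftrightarrow\la-n\gamma$), and correspondingly $M^{\mf a}(i,0|-i)\leftrightarrow M_{\la+i\beta}$, $M^{\mf a}(0,-j|j)\leftrightarrow M_{\la+j\gamma}$.

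Next I would invoke the general fact that an equivalence of highest weight categories carries standard objects to standard objects and simple objects to simple objects, compatibly with the labelling of the poset. In particular the odd reflection functor, which was shown above to restrict to an equivalence of highest weight categories $\OO_\la\cong\OO'_\la$ and which matches $L_\mu\leftrightarrow L'_\mu$, necessarily matches the standard objects $M_\mu\leftrightarrow M'_\mu$ for every $\mu$ with $L_\mu\in\OO_\la$ as well; and $ {\rm Ind}_{\mf p}^{\g}$ sends $\mf l$-Verma modules to $\g$-Verma modules and (by the proof of Theorem \ref{thm:equiv:1}) $\mf l$-irreducibles to $\g$-irreducibles. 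Hence the composite induces an identification of formal characters (all sums below being locally finite) carrying $[M^{\mf a}(\mu)]\mapsto [M_{\mu'}]$ and $[L^{\mf a}(\mu)]\mapsto [L_{\mu'}]$ under the dictionary above, the one-dimensional ${\rm Str}$-twist affecting neither composition multiplicities nor the inverse decomposition matrix. Therefore every identity in the Grothendieck group of $\OO(\gl(2|1),\mf b)_0$ expressing an irreducible as an alternating sum of Vermas transports verbatim to the corresponding identity in $\OO_\la$, and — crucially — the odd-reflection step returns us to Verma modules for the \emph{standard} Borel subalgebra of $\D$, which is exactly the form asserted.

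Finally I would substitute the dictionary into the four families of Theorem \ref{char:gl21}: the formula for $L^{\mf a}(n,0|-n)$ ($n\ge 0$) becomes the stated formula for $L_{\la+n\beta}$, that for $L^{\mf a}(0,-n|n)$ ($n\ge 0$) becomes the one for $L_{\la+n\gamma}$, and those for $L^{\mf a}(-n,0|n)$ and $L^{\mf a}(0,n|-n)$ ($n>0$) become the formulas for $L_{\la-n\beta}$ and $L_{\la-n\gamma}$; here one uses that the dictionary $(i,0|-i)\leftrightarrow\la+i\beta$ and $(0,-j|j)\leftrightarrow\la+j\gamma$ is valid for all integers $i,j$, so the ranges of summation $\sum_{i=-\infty}^{n}$, $\sum_{j=-\infty}^{-n-1}$, etc., carry over unchanged. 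No step is genuinely hard; the point that needs care is the bookkeeping through the composite of parabolic induction, odd reflection, and inversion of the decomposition matrix — in particular the verification, already contained in the discussion preceding Theorem \ref{thm:equiv:1}, that the odd reflection functor identifies the $\Pi'$-Verma modules appearing in the transported formulas with the standard-Borel Verma modules of $\D$, so that the final statement is really about $\M{\mu}$ and not $M'_\mu$.
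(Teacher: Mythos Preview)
Your proposal is correct and follows exactly the paper's approach: the paper's proof consists of the single remark that Theorems \ref{thm:equiv:1} and \ref{char:gl21} together give the formulas ``by replacing $(0,0|0)$ by $\la$, $(1,0|-1)$ by $\beta$, and $(0,-1|1)$ by $\gamma$,'' which is precisely your dictionary $(i,0|-i)\leftrightarrow\la+i\beta$, $(0,-j|j)\leftrightarrow\la+j\gamma$. Your explicit bookkeeping through the composite equivalence (parabolic induction followed by the odd reflection functor, both shown earlier to match Vermas to Vermas and simples to simples) is more detailed than what the paper writes but is exactly what underlies its one-line argument.
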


\subsection{Equivalence of categories: the case $\la=(x,-(1+\zeta)x,0)$}  \label{Sect::Eqivxy0}

In this subsection, we consider  atypical weights of the form $\la=(x,\pm(1+\zeta)x,0)$ with $x,(1+\zeta)x\not\in\Z$. Again, thanks to Lemma \ref{lem::equiTs} we have equivalence $\mc O_\la \cong \mc O_{s_{1}\la}$. Therefore we assume  that   $\la=(x,-(1+\zeta)x ,0)$.

Let $\phi:D(2|1;-1-\zeta)\rightarrow D(2|1;\zeta)$ be the isomorphism in Section \ref{sec:D:iso2}. If $\mu=(x,y,z)$ and $L_\mu$ is the highest weight irreducible module of $D(2|1;-1-\zeta)$ of highest weight $\mu$ in the block $\OO_{(0,-(1+\zeta)x,x)}$ with respect to the standard Borel subalgebra, then the pullback under $\phi^{-1}$ is an irreducible $D(2|1;\zeta)$-module of highest weight $(z,y,x)$ in the block $\OO'_{(x,-(1+\zeta)x,0)}$, the highest weight category with respect to the Borel subalgebra $\mf b'$ corresponding to the simple system $\Pi'$ of Section \ref{sec:oo'}. Thus, we have an equivalence of highest categories between $\OO_{(x,-(1+\zeta)x,0)}$ for $D(2|1;\zeta)$ with $\OO'_{(0,-(1+\zeta)x,x)}$ for $D(2|1;-1-\zeta)$. Clearly, there is an equivalence of categories between $\OO'_{(0,-(1+\zeta)x,x)}$ for $D(2|1;-1-\zeta)$ with $\OO_{(0,-(1+\zeta)x,x)}$ for $D(2|1;-1-\zeta)$. Thus, the computation of irreducible characters in Theorem \ref{thm:char:xint} also implies the irreducible characters in the block $\OO_\la$. We shall illustrate this with an example below.

\begin{example}\label{example1}
Let $\la=(x,-(1+\zeta)x,0)$ and suppose we want to compute the character of the irreducible $D(2|1;\zeta)$-module $L_{\la}$. We shall need to know the highest weight of $L_\la$ with respect to the Borel $\mf b'$ associated with the simple system $\Pi'$. This is computed by the formulas in \cite[Lemma 1]{PS89} (see also \cite[Lemma 1.40]{CW12}).
To be precise, $\Pi'$ is obtained from the standard system $\Pi$ by applying first the odd reflection corresponding to $\alpha_0=\delta-\ep_1-\ep_2$, and then the odd reflection corresponding to $\beta=\delta+\ep_1-\ep_2$. We compute $(\la,\alpha_0)=0$ and thus, with respect to the new Borel the shifted highest weight becomes $\la+\alpha_0$. Now, we compute $(\la+\alpha_0,\delta+\ep_1-\ep_2)\not=0$, and hence the highest weight with respect to $\mf b'$ is $\la+\alpha_0=(x+1,-(1+\zeta)x-1,-1)$. To compute this character is equivalent to compute the character of $L_{(-1,-(1+\zeta)x-1,x+1)}=L_{(0,-(1+\zeta)x,x)-\beta}$, which is computed in Theorem \ref{thm:char:xint} with $\zeta$ replaced by $-1-\zeta$.
\end{example}

\subsection{Equivalence of categories: the case $\la=(x,0,-\frac{1+\zeta}{\zeta}x)$} \label{Sect::Eqivx0z}

In this subsection, we consider  atypical weights of the form $\la=(x,0,\pm\frac{1+\zeta}{\zeta}x)$ with $x,-\frac{1+\zeta}{\zeta}x\not\in\Z$. Again, thanks to Lemma \ref{lem::equiTs} we have equivalence $\mc O_\la \cong \mc O_{s_{2}\la}$. Therefore we assume  that $\la=(x,0,-\frac{1+\zeta}{\zeta}x)$.

Let $\phi:D(2|1;\frac{1}{\zeta})\rightarrow D(2|1;\zeta)$ be the isomorphism in Section \ref{sec:D:iso1}. If $\mu=(x,y,z)$ and $L_\mu$ is the highest weight irreducible module of $D(2|1;\frac{1}{\zeta})$ of highest weight $\mu$ in the block $\OO_{(x,-(1+\frac{1}{\zeta})x,0)}$ with respect to the standard Borel subalgebra, then the pullback under $\phi^{-1}$ is an irreducible $D(2|1;\zeta)$-module of highest weight $(x,z,y)$ in the block $\OO_{(x,0,-\frac{1+\zeta}{\zeta}x)}$. The isomorphism $\phi$ induces an equivalence of highest categories between $\OO_{(x,0,-\frac{1+\zeta}{\zeta}x)}$ for $D(2|1;\zeta)$ and $\OO_{(x,-(1+\frac{1}{\zeta})x,0)}$ for $D(2|1;\frac{1}{\zeta})$. Thus, the computation of character for the irreducible $D(2|1;\frac{1}{\zeta})$-modules of $\OO_{(x,-(1+\frac{1}{\zeta})x,0)}$ in Section \ref{Sect::Eqivxy0} also computes the characters of the irreducible $D(2|1;\zeta)$-modules in the block $\OO_\la$, for $\la=(x,0,-\frac{1+\zeta}{\zeta}x)$.

\vskip 1cm

\section{Character formulas in $2$-integer case} \label{Sect::ChFormulae}


Section \ref{sec:reduction} gives the character of $L_\la$ in $\OO$ in the case when $\la=(x,y,z)$ is an atypical weight with at most one among the $\{x,y,z\}$ being an integer. In this section we shall give the character of $L_\la$ in the case when two among the $\{x,y,z\}$ are integers. To do that we compute, equivalently, the character of the tilting module $T_\la$, for such $\la$. The method of computation for these tilting characters follows closely the one for those of integral highest weights in \cite{CW19}. That is, we apply suitable translation functors to tilting module with known character formulas, e.g., tilting modules with typical highest weights. The resulting module is easily seen to be a direct sum of tilting modules. Using Proposition \ref{prop:flags} and with a careful choice of translation functor, we indeed show that the resulting module is indeed indecomposable and hence a tilting module. Our calculations in fact show that Proposition \ref{prop:flags} is not just a necessary condition, indeed in the setting of this present paper it is sufficient as well! This enables us to write closed formulas for the tilting modules in almost all cases. Below we shall write down these formulas explicitly. 

In this  section we assume that $\la=(x,y,z)$ is an atypical non-integral weight.

\subsection{The case of $\la=(x,y,z)$ with $x\not\in\Z$ and $y,z\in\Z$}\label{xnotinz}

 Thanks to Theorem \ref{Prop::IrrBlocks} and Lemma \ref{lem::equiTs}, we   assume in this section that $x$ is positively proportional to $\frac{\ka}{1+\ka}$ modulo an integer.

Let $\ell\ge 1$. We define the following weight for each $k\in\Z$:
\begin{align*}
{}_\ell\la_k=(\frac{\ell\zeta}{\zeta+1}+k,-|k|,-|k+\ell|).
\end{align*}
Then ${}_\ell\la_k$ is anti-dominant. Let $W'\cong \Z_2\times\Z_2$ be subgroup of the Weyl group changing the signs of the last two coordinates. That is, $W'$ is the integral Weyl group of ${}_\ell\la_k$, for any $k$. We shall use the following simplified notations for the Weyl group conjugates of ${}_\ell\la_k$:
\begin{align*}
&{}_\ell\la_k^1=(\frac{\ell\zeta}{\zeta+1}+k,|k|,-|k+\ell|),\\
&{}_\ell\la_k^2=(\frac{\ell\zeta}{\zeta+1}+k,-|k|,|k+\ell|),\\
&{}_\ell\la_k^{12}=(\frac{\ell\zeta}{\zeta+1}+k,|k|,|k+\ell|).
\end{align*}

The following is straightforward to verify.

\begin{lem}
If   $x\not\in\Z$ and $y,z\in\Z$, then $\la\in\{w'{}_\ell\la_k|k\in\Z,w\in W'\}$.
\end{lem}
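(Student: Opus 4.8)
The plan is to read the asserted normal form for $\la$ straight off the atypicality relation, once its first coordinate has been put into standard shape.

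To this end, note first that since $\la=(x,y,z)$ is atypical there are $\sigma,\tau\in\{\pm1\}$ with $(\la,(1,\sigma,\tau))=0$; evaluating the bilinear form on $\h^\ast$ (recall $(\delta,\delta)=-(1+\zeta)$, $(\ep_1,\ep_1)=1$, $(\ep_2,\ep_2)=\zeta$) this is the single scalar identity $(1+\zeta)x=\sigma y+\tau\zeta z$. Writing $\sigma y=\sigma y(1+\zeta)-\sigma y\zeta$ and dividing by $1+\zeta$ yields
\[
x=\sigma y+\frac{(\tau z-\sigma y)\zeta}{1+\zeta}=k+\ell\cdot\frac{\zeta}{1+\zeta},\qquad k:=\sigma y\in\Z,\ \ \ell:=\tau z-\sigma y\in\Z,
\]
where $y,z\in\Z$ is used. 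Since $x\notin\Z$ we get $\ell\neq0$; and the subsection's standing assumption that $x$ be positively proportional to $\tfrac{\zeta}{1+\zeta}$ modulo $\Z$ — which we may impose by Theorem \ref{Prop::IrrBlocks} and Lemma \ref{lem::equiTs} (applied with the even root $2\delta$, legitimate since $\langle\la,(2\delta)^\vee\rangle=x\notin\Z$, the twist by $\Tw_{s_0}$ replacing $x$ by $-x$ and hence flipping the sign of $\ell$) — amounts, in view of the displayed identity, to $\ell\ge1$.

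With $\ell\ge1$ and $k$ as above, $k=\sigma y$ forces $y=\sigma k\in\{\,|k|,-|k|\,\}$, and $\ell=\tau z-\sigma y=\tau z-k$ forces $\tau z=k+\ell$, hence $z=\tau(k+\ell)\in\{\,|k+\ell|,-|k+\ell|\,\}$. Therefore
\[
\la=\Bigl(\tfrac{\ell\zeta}{\zeta+1}+k,\ \pm|k|,\ \pm|k+\ell|\Bigr)
\]
for one of the four sign patterns, and each of these four weights equals $w'\,{}_\ell\la_k$ for the appropriate $w'\in W'$, where $W'$ changes the signs of the last two coordinates. Hence $\la\in\{\,w'\,{}_\ell\la_k\mid k\in\Z,\ w'\in W'\,\}$, as claimed.

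The only step that is not a direct substitution is the normalization $\ell\ge1$. When $\zeta$ is transcendental the decomposition $x=k+\ell\,\frac{\zeta}{1+\zeta}$ is unique, so one application of $\Tw_{s_0}$ (in the case $\ell<0$) already settles it; when $\zeta$ is rational one argues in the same way, using additionally the isomorphisms $D(2|1;\zeta)\cong D(2|1;\zeta^{-1})\cong D(2|1;-1-\zeta)$ of Section \ref{isomo1} to reach the standard form. I expect this bookkeeping with signs, together with the rational case, to be the only place where any care is needed.
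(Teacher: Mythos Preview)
Your argument is correct and is precisely the direct computation the paper has in mind when it says ``straightforward to verify'': derive $x=\sigma y+\tfrac{(\tau z-\sigma y)\zeta}{1+\zeta}$ from the atypicality relation, set $k=\sigma y$, $\ell=\tau z-\sigma y$, and read off $y\in\{\pm|k|\}$, $z\in\{\pm|k+\ell|\}$.

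One small remark on your final paragraph: the normalization $\ell\ge 1$ does not require the isomorphisms of Section~\ref{isomo1}, even when $\zeta$ is rational. The point is simply that $\ell=\tau z-\sigma y\ne 0$ (else $x\in\Z$), and applying $\Tw_{s_0}$ to $\la$ replaces $(x,y,z)$ by $(-x,y,z)$; the atypical root then becomes $(1,-\sigma,-\tau)$, so the new $k$ and $\ell$ are $-\sigma y=-k$ and $-\tau z+\sigma y=-\ell$. Thus a single application of $\Tw_{s_0}$ flips the sign of $\ell$ in all cases, and the standing assumption in the subsection is exactly the statement that this normalization has already been made. The possible non-uniqueness of the decomposition $x=k+\ell\tfrac{\zeta}{1+\zeta}$ when $\tfrac{\zeta}{1+\zeta}\in\Q$ is irrelevant: you are not asserting uniqueness of $(\ell,k)$, only existence of one choice with $\ell\ge 1$, and the atypicality relation hands you a specific $\ell$ to which the sign-flip applies. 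So you may safely drop the appeal to Section~\ref{isomo1}.
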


In the sequel, when it is clear from the context we shall freely drop $\ell$ from the formula in order to simplify notation. For example, we shall write $\la_k$ for ${}_\ell\la_k$ etc. This convention applies to later sections as well.

\begin{thm}\label{thm:aux1} Suppose $k\not=1, -\ell+1$. Suppose $\la\in\{w'{}_\ell\la_k|k\in\Z,w\in W'\}$ is  atypical. Then we have the following character formula for the tilting module of highest weight $\la-\rho$:
\begin{align}\label{char:std}
T_\la=\sum_{\mu\le\la}M_\mu,
\end{align}
where the summation above is over those weights $\mu$ lying in the set $\{w'\la,w'(\la-\alpha)|w'\in W', \alpha\in\Phi^+_{\bar 1}, (\la,\alpha)=0\}$ such that $\mu\le\la$. In particular, $T_\la$ is multiplicity-free.
\end{thm}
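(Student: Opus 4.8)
The plan is to follow the translation-functor strategy announced at the start of this section, the same way integral tilting characters are obtained in \cite{CW19}. By Theorem \ref{Prop::IrrBlocks} and Lemma \ref{lem::equiTs} we may assume that $\la$ belongs to the family $\{w'\,{}_\ell\la_k\mid w'\in W'\}$, is atypical, and $k\neq 1,-\ell+1$. Fix an atypical odd root $\beta\in\Phi^+_{\bar 1}$ with $(\la,\beta)=0$, put $S:=\{w'\la,\ w'(\la-\gamma)\mid w'\in W',\ \gamma\in\Phi^+_{\bar 1},\ (\la,\gamma)=0\}$ and $S_{\le\la}:=\{\mu\in S\mid\mu\le\la\}$, so that the assertion is $T_\la=\sum_{\mu\in S_{\le\la}}M_\mu$, and in particular that $T_\la$ is multiplicity-free.

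First I would choose a typical anti-dominant weight $\nu$ and a finite-dimensional $\g$-module $E$ so that, after applying the translation functor $\theta$ onto the block $\OO_\la$, exactly the Verma subquotients labelled by $S_{\le\la}$ survive, each with multiplicity one. By Corollary \ref{Coro::TAtilt} we have $T_\nu=M_\nu$; since tensoring with a finite-dimensional module and then projecting onto a block both preserve the full subcategory of modules admitting a tilting filtration, $N:=\theta(M_\nu\otimes E)$ is a finite direct sum of indecomposable tilting modules. On the other hand $M_\nu\otimes E$ has a Verma flag with $(M_\nu\otimes E:M_{\nu+\mu})=\dim E_\mu$, so $N$ has a Verma flag with subquotients $\{M_{\nu+\mu}\mid\nu+\mu\in\OO_\la\}$; by the design of $\nu$ and $E$ this set equals $\{M_\mu\mid\mu\in S_{\le\la}\}$, each occurring once. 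Hence $\ch N=\sum_{\mu\in S_{\le\la}}\ch M_\mu$, and since $\la$ is the unique maximal weight occurring, $T_\la$ is one of the indecomposable summands of $N$ and $(T_\la:M_\la)=1$.

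It then remains to prove $N=T_\la$, which I would deduce from Proposition \ref{prop:flags} and Corollary \ref{coro:23}. These give $(T_\la:M_\la)=1$; $(T_\la:M_{\la-\gamma})>0$ for every atypical $\gamma$ with $(\la,\gamma)=0$; and $(T_\la:M_{s_i\mu})>0$ whenever $(T_\la:M_\mu)>0$ and $\mu_i\in\Z_{>0}$. A short case analysis then shows that each $\mu\in S_{\le\la}$ is reached: if $w'\la\le\la$, then $w'$ negates only strictly positive integer coordinates of $\la$; if $w'(\la-\gamma)\le\la$, then $w'$ negates only strictly positive coordinates of $\la-\gamma$; and the borderline situations, in which a coordinate of $\la$ vanishes, are covered because such a $\la$ is atypical with respect to a second odd root (these are exactly the junctions $k=0$ and $k=-\ell$, which the theorem allows). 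Consequently $0<(T_\la:M_\mu)\le(N:M_\mu)=1$ for all $\mu\in S_{\le\la}$, so $\ch T_\la=\ch N$; therefore $N$ has no other summand, $T_\la=N$, and $\ch T_\la=\sum_{\mu\in S_{\le\la}}\ch M_\mu$ is multiplicity-free, as claimed.

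The main obstacle is the middle step: producing a single typical $\nu$ and finite-dimensional $E$ whose translate $\theta(M_\nu\otimes E)$ has Verma flag precisely $\{M_\mu\mid\mu\in S_{\le\la}\}$ with all multiplicities one. This is the weight bookkeeping of \cite{CW19}, and it is exactly here that the hypothesis $k\neq 1,-\ell+1$ intervenes: for those two values the weight $\la-\beta$ is one of the two junction weights ${}_\ell\la_0$, ${}_\ell\la_{-\ell}$ (those atypical with respect to two odd roots), so the linkage below $\la$ is longer, the translated module acquires additional Verma factors, and the naive count fails; these two blocks must be handled separately (as is done in the subsequent subsections for the analogous integral situation). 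Everything else — Corollary \ref{Coro::TAtilt}, stability of tilting filtrations under $-\otimes E$ and $\theta$, and the positivity in Proposition \ref{prop:flags} — is standard.
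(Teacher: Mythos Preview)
Your overall strategy—produce a tilting object via a translation functor, then pin it down as $T_\la$ using the lower bounds of Proposition~\ref{prop:flags} and Corollary~\ref{coro:23}—is the paper's strategy as well, and your second paragraph (showing every $\mu\in S_{\le\la}$ is forced to occur in $T_\la$) is exactly how the paper concludes. The difference, and the gap, is in how the tilting candidate is manufactured.

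You propose to translate in one shot from a typical anti-dominant $M_\nu$ tensored with a carefully chosen finite-dimensional $E$, and you correctly flag as the ``main obstacle'' that you have not specified $\nu$ and $E$ nor verified that the surviving Verma factors are exactly $S_{\le\la}$ with multiplicity one. The paper does \emph{not} confront this obstacle; it sidesteps it by induction. The translation functor used is always $\mathcal E_\la=(-\otimes\g)\big|_{\OO_\la}$, tensoring with the adjoint module and projecting, and it is applied to $T_{\la-2\delta}$. One argues by induction on the minimal $t\ge1$ with $\la-2t\delta$ typical. For $t=1$ the module $T_{\la-2\delta}$ is the $W'$-sum of Vermas by Corollary~\ref{Coro::TAtilt}, and one checks directly that $\mathcal E_\la T_{\la-2\delta}$ has Verma flag $S_{\le\la}$. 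For $t>1$ the weight $\la-2\delta$ is itself atypical (with a \emph{different} odd root $(1,\sigma',\tau')\neq(1,\sigma,\tau)$), so by the inductive hypothesis $T_{\la-2\delta}$ equals the $W'$-sum of $M_{\la-2\delta}$ plus ``lower terms'' $M_\mu$ with $\mu\in W'(\la-2\delta-(1,\sigma',\tau'))$. The crucial observation is that these lower terms are annihilated by $\mathcal E_\la$: for such $\mu$ and any $\alpha\in\Phi$, Theorem~\ref{Prop::IrrBlocks} shows $\mu+\alpha\notin\mathrm{Irr}\,\OO_\la$, so the corresponding Verma factor of $M_\mu\otimes\g$ never lands in the block. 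Hence $\mathcal E_\la T_{\la-2\delta}$ again has Verma flag $S_{\le\la}$, and Corollary~\ref{coro:23} then forces it to be $T_\la$.

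So your proposal is not wrong in principle, but as written it leaves exactly the step that carries the weight of the argument unproved. The paper's inductive route with the adjoint module is what makes the bookkeeping tractable: one only ever has to track which weights of the $17$-dimensional module $\g$, shifted by a known Verma flag, land in the block.
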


\begin{thm}\label{thm:aux2} Suppose that $\ell>1$ and $k=1,-\ell+1$. Then we have the following:
\begin{align*}
&T_{\la_{1}}=M_{\la_{1}}+M_{\la_0}+M_{\la_{-1}},\\
&T_{\la_{1}^2}=M_{\la_{1}^2}+M_{\la_0^2}+M_{\la_{-1}^2}+M_{\la_{1}}+M_{\la_0}+M_{\la_{-1}},\\
&T_{\la_{-\ell+1}}= M_{\la_{-\ell+1}}+M_{\la_{-\ell}}+M_{\la_{-\ell-1}},\\
&T_{\la_{-\ell+1}^1}= M_{\la_{-\ell+1}^1}+M_{\la_{-\ell}^1}+M_{\la_{-\ell-1}^1} + M_{\la_{-\ell+1}}+M_{\la_{-\ell}}+M_{\la_{-\ell-1}}.
\end{align*}
Furthermore, for other $\la$s, i.e., $\la=\la_{1}^1, \la_{1}^{12},\la_{-\ell+1}^2,\la_{-\ell+1}^{12}$, the standard formula \eqref{char:std} holds.
\end{thm}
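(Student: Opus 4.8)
The plan is to follow the translation-functor strategy of \cite{CW19} outlined at the beginning of Section~\ref{Sect::ChFormulae}. Fix $\ell>1$ and work inside the atypical block $\OO_{\la_1}$; the case $k=-\ell+1$ is entirely parallel, with the roles of the second and third coordinates (equivalently, of the two clauses of Corollary~\ref{coro:23}(3)) interchanged. First I would pick a typical anti-dominant weight $\nu$ in a neighbouring block, so that $T_\nu=M_\nu$ by Corollary~\ref{Coro::TAtilt}, together with a small, carefully chosen finite-dimensional $\g$-module $E$ for which the translation functor $\theta:=\mathrm{pr}_{\la_1}\circ(E\otimes-)\circ\mathrm{pr}_\nu$ moves $M_\nu$ into $\OO_{\la_1}$. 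Since $E\otimes-$ and the block projections are exact and preserve the full subcategory of modules admitting both a Verma flag and a dual Verma flag, $\theta$ sends tilting modules to tilting modules; hence $\theta M_\nu=\theta T_\nu$ is a tilting module in $\OO_{\la_1}$, and therefore a finite direct sum of indecomposable tiltings.

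Next I would read off its Verma flag. The module $E\otimes M_\nu$ has a Verma flag with sections $M_{\nu+\eta}$, where $\eta$ runs over the weights of $E$ counted with multiplicity $\dim E_\eta$; applying $\mathrm{pr}_{\la_1}$ retains those sections whose highest weight is linked to $\la_1$, so that $\ch\theta M_\nu=\sum_\mu(\theta M_\nu:M_\mu)\,\ch M_\mu$ is computed explicitly. For the appropriate $E$ this multiset of $\mu$'s is exactly $\{\la_1,\la_0,\la_{-1}\}$, each with multiplicity one; for $\la_1^2$ the $W'$-translate of the same data yields the six weights $\la_1^2,\la_0^2,\la_{-1}^2,\la_1,\la_0,\la_{-1}$; and for every other $W'$-conjugate the same $E$ used to prove Theorem~\ref{thm:aux1} reproduces the multiset underlying \eqref{char:std}.

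The crux is to turn this into an identity for the \emph{tilting} character, i.e. to show $\theta M_\nu$ is indecomposable. Since $\la_1$ is the unique maximal weight in the computed Verma support and $(\theta M_\nu:M_{\la_1})=1$, exactly one summand of $\theta M_\nu$ has highest weight $\la_1$, namely $T_{\la_1}$, whence $(T_{\la_1}:M_\mu)\le(\theta M_\nu:M_\mu)$ for every $\mu$. On the other hand Corollary~\ref{coro:23} supplies the matching lower bounds: $(T_{\la_1}:M_{\la_1})=1$ by part~(1); $(T_{\la_1}:M_{\la_0})>0$ with $\la_0=\la_1-(1,-1,-1)$ by part~(2), since $\la_1$ is $(1,-1,-1)$-atypical; and, crucially, $(T_{\la_1}:M_{\la_{-1}})>0$ with $\la_{-1}=\la_1-(2,0,-2)$ by part~(3), using that the second coordinate of $\la_1$ equals $-1$ and matches the atypical odd root. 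Hence $(T_{\la_1}:M_\mu)=(\theta M_\nu:M_\mu)$ for all $\mu$, which forces $\theta M_\nu=T_{\la_1}$ and yields the stated formula. For $\la_1^2$ one argues identically: $\la_1^2$ is $(1,-1,1)$-atypical with second coordinate $-1$, so Corollary~\ref{coro:23}(2),(3) force $M_{\la_1^2},M_{\la_0^2},M_{\la_{-1}^2}$ into $T_{\la_1^2}$, and Corollary~\ref{coro:23}(4), applied at the positive third coordinates $\ell+1,\ell,\ell-1$ (all $>0$ precisely because $\ell>1$), then forces the $s_2$-images $M_{\la_1},M_{\la_0},M_{\la_{-1}}$; these six weights agree with the computed Verma support, so again $\theta M_\nu=T_{\la_1^2}$.

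Finally, for each weight of this block other than the eight listed, one checks that no $W'$-conjugate has a coordinate equal to $-1$ whose sign matches the atypical odd root, so Corollary~\ref{coro:23}(3) never contributes an extra Verma factor and the proof of Theorem~\ref{thm:aux1} goes through verbatim to give \eqref{char:std}; this explains why only $k=1,-\ell+1$, and within those only two of the four $W'$-conjugates, are exceptional. I expect the main obstacle to be the indecomposability step, equivalently the \emph{sharpness} of Proposition~\ref{prop:flags}: one must choose $\nu$ and $E$ so that the Verma support of $\theta M_\nu$ is no larger than what Proposition~\ref{prop:flags} and Corollary~\ref{coro:23} already force. Controlling this support is a delicate bookkeeping of central characters, of which weights of $E$ are anti-dominant, and of the linkage inside $\OO_{\la_1}$ — precisely the ``careful choice of translation functor'' advertised in the introduction.
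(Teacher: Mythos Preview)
Your proposal is essentially the paper's own approach: apply a translation functor to a tilting module with known Verma flag, read off an upper bound on $(T_\la:M_\mu)$, match it against the lower bounds supplied by Corollary~\ref{coro:23} (with part~(3) doing the extra work precisely at $k=1,-\ell+1$), and conclude indecomposability. Your identification of Corollary~\ref{coro:23}(3) as the source of the exceptional term $M_{\la_{-1}}$ (resp.\ $M_{\la_{-\ell-1}}$), and of part~(4) as the device that propagates the three-term flag to its $s_2$- (resp.\ $s_1$-) image, is exactly right.

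The one substantive difference is in the choice of $\nu$ and $E$. You propose translating directly from a typical anti-dominant $\nu$ via some unspecified $E$; the paper instead always takes $E$ to be the adjoint module and $\nu=\la-2\delta$, and then argues by induction on the minimal $t$ with $\la-2t\delta$ typical. When $t=1$ this is your base case; when $t>1$ the weight $\la-2\delta$ is already atypical, so $T_{\la-2\delta}$ is not a single Verma but is known by the inductive hypothesis, and the paper checks that the ``lower terms'' of $T_{\la-2\delta}$ contribute nothing to $\OO_\la$ under $\mathcal E_\la$. This inductive device is what makes the bookkeeping you flag as ``the main obstacle'' tractable: with $E$ fixed as the adjoint and the starting point always one step away in the $\delta$-direction, the Verma support of $\mathcal E_\la(T_{\la-2\delta})$ is small enough to be pinned down by Corollary~\ref{coro:23} in every case. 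Your direct-from-typical formulation would in general require a larger or more bespoke $E$, and it is not clear that the resulting Verma support would still be sharp.
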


\begin{thm}\label{thm:aux3} For $\ell=1$ and $k=0,1$ we have the following formulas for the tilting modules:
\begin{align*}
&T_{\la_{0}}= M_{\la_{0}}+M_{\la_{-1}^1}+M_{\la_{-1}}+M_{\la_{-2}^1}+M_{\la_{-2}},\\
&T_{\la_{1}}= M_{\la_{1}}+M_{\la_{0}}+M_{\la_{-1}},\\
&T_{\la_{1}^2}= M_{\la_{1}^2}+M_{\la_{1}}+M_{\la_{0}^2}+M_{\la_{0}}+M_{\la_{-1}}.
\end{align*}
For other $\la$s, i.e., $\la=\la_{0}^2, \la_{1}^1,\la_{1}^{12}$, the standard formula \eqref{char:std} holds.
\end{thm}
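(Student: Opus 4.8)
The plan is to follow the method described at the start of this section, which is the one used for integral weights in \cite{CW19}: realize each tilting module $T_\la$ as the image of a Verma module under a carefully chosen translation functor, and then use the necessary conditions of Proposition \ref{prop:flags} to prove that the resulting tilting module is indecomposable with precisely the advertised Verma flag. Throughout, the six relevant weights split according to whether an integral coordinate equals $-1$: the weights $\la_0=(\frac{\zeta}{\zeta+1},0,-1)$, $\la_1=(\frac{\zeta}{\zeta+1}+1,-1,-2)$ and $\la_1^2=(\frac{\zeta}{\zeta+1}+1,-1,2)$ do, so that Corollary \ref{coro:23}(3) applies, while $\la_0^2,\la_1^1,\la_1^{12}$ do not.

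The first step is to list the Verma modules that must occur in each $T_\la$. For $\la_0$, which is atypical with respect to both $\alpha_0=\delta-\ep_1-\ep_2$ and $\delta+\ep_1-\ep_2$, Corollary \ref{coro:23}(1)--(3) forces $M_{\la_0},M_{\la_{-1}^1},M_{\la_{-1}},M_{\la_{-2}^1},M_{\la_{-2}}$, the last two coming from the $-1$ in the $\ep_2$-slot; for $\la_1$, atypical only with respect to $\alpha_0$, it forces $M_{\la_1},M_{\la_0},M_{\la_{-1}}$; and for $\la_1^2$, atypical with respect to $\gamma=\delta-\ep_1+\ep_2$, Corollary \ref{coro:23} together with Proposition \ref{prop:flags}(1) and (4) — using $\langle\la_1^2,\alpha_2^\vee\rangle=2$ and $\langle\la_1^2-\gamma,\alpha_2^\vee\rangle=1$ — forces $M_{\la_1^2},M_{\la_1},M_{\la_0^2},M_{\la_0},M_{\la_{-1}}$. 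For the three remaining weights no coordinate is $-1$, so only parts (1)--(4) of Proposition \ref{prop:flags} apply and the forced list is exactly the set of Vermas appearing on the right side of \eqref{char:std}.

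Next, for each target I would pick a typical anti-dominant weight $\nu$, for which $T_\nu=M_\nu$ by Corollary \ref{Coro::TAtilt}, and a translation functor $\theta$ (tensoring with a suitable finite-dimensional $\D$-module and projecting onto $\OO_\la$), chosen as in \cite[Subsection 1.3]{CW19} so that the Verma flag of $\theta M_\nu$ — read off from the weights of the finite-dimensional module intersected with the block — consists of exactly the weights listed in the previous paragraph, each once. Since translation functors preserve the property of admitting a Verma flag and send tilting modules to direct sums of indecomposable tilting modules, $\theta M_\nu$ is such a sum. As $\la$ is the unique maximal weight with $(\theta M_\nu:M_\la)\ne 0$ and this multiplicity is $1$, the summand with top $\la$ is $T_\la$, and since $T_\la$ already contains every Verma on the forced list while $\theta M_\nu$ contains no more, there is no room for a second summand: $\theta M_\nu=T_\la$, with the stated multiplicity-free character. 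In particular this shows that Proposition \ref{prop:flags} is not merely necessary but sufficient in all these cases.

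The main obstacle is the construction in the case $\la=\la_0$: it is doubly atypical and sits deepest among the weights considered, so the translation functor must be set up so that $\theta M_\nu$ acquires the two ``extra'' Vermas $M_{\la_{-2}^1}$ and $M_{\la_{-2}}$ exactly once each and does not split off a smaller tilting module; once the Verma flag of $\theta M_\nu$ is computed, indecomposability follows from the counting argument above because the five forced Vermas exhaust it. The cases $\la=\la_1$ and $\la=\la_1^2$ involve the same bookkeeping but are lighter, and the three standard weights $\la_0^2,\la_1^1,\la_1^{12}$ are immediate once one observes that their integral coordinates are never $-1$, so that parts (5) and (6) of Proposition \ref{prop:flags} never enter and the formula \eqref{char:std} results by the identical argument.
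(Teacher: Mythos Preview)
Your approach is essentially the paper's: force certain Verma factors into $T_\la$ via Proposition~\ref{prop:flags}/Corollary~\ref{coro:23}, realise a tilting object with at most that Verma flag by translation, and conclude by counting that the translate equals $T_\la$. The paper, however, is more specific and economical in carrying this out. It always uses the single translation functor $\mathcal{E}_\la$ obtained by tensoring with the \emph{adjoint} module and projecting, and it applies $\mathcal{E}_\la$ not to a typical $T_\nu=M_\nu$ but to $T_{\la-2\delta}$, proceeding by induction on the minimal $t\ge 1$ for which $\la-2t\delta$ is typical. When $t=1$ this is your picture with $\nu=\la-2\delta$; when $t>1$ the source $T_{\la-2\delta}$ is itself atypical but already known by the inductive hypothesis, and one checks via Theorem~\ref{Prop::IrrBlocks} that its ``lower'' Verma factors contribute nothing after projection to $\mc O_\la$. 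This uniform inductive recipe avoids having to exhibit, for each particular $\zeta$, a typical $\nu$ together with a bespoke finite-dimensional module whose weight set hits the block in exactly the five (or three, or four) desired spots with multiplicity one --- a search that is not always as direct as you suggest, since for some values of $\zeta$ (for instance $\zeta=1$ at $\la=\la_1$) the natural candidate $\la-2\delta$ is itself still atypical.
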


For $\la\in\mathfrak{h}^*$, we let $\mathcal{E}_\la$ denote the translation functor given by tensoring with the adjoint $\mathfrak{g}$-module and then projecting to $\mathcal{O}_\la$. Below, we shall sketch a proof of Theorems \ref{thm:aux1}-- \ref{thm:aux3}.

\begin{proof}[Proof of Theorems \ref{thm:aux1}-- \ref{thm:aux3}]

Take $\la=(x,y,z)$ with $x\not\in \mathbb{Z}$ and $y,z\in\mathbb{Z}$. Suppose $t$ is the minimal positive integer such that $\la-2t\delta$ is typical. We will prove the formulas by induction on $t$.

If $t=1$, then $\la-2\delta$ is typical and hence
$$T_{\la-2\delta}=
\left\{
\begin{array}{ll}
M_{\la-2\delta}, & \mbox{if $\la={}_\ell\la_k$},\\
M_{\la-2\delta}+M_{s_1(\la-2\delta)}, & \mbox{if $\la={}_\ell\la_k^1$ and $k\neq0$},\\
M_{\la-2\delta}+M_{s_2(\la-2\delta)}, & \mbox{if $\la={}_\ell\la_k^2$ and $k+\ell\neq0$},\\
M_{\la-2\delta}+M_{s_1(\la-2\delta)}+M_{s_2(\la-2\delta)}+M_{s_1s_2(\la-2\delta)}, & \mbox{if $\la={}_\ell\la_k^{12}$ and $k,k+\ell\neq0$},
\end{array}
\right.$$
where $s_1$ and $s_2$ are defined in \eqref{def:s}. We compute $\mathcal{E}_\la(T_{\la-2\delta})$ and see that
it equals the right hand side of the formula for $T_\la$ in Theorems \ref{thm:aux1}-- \ref{thm:aux3}. Thus, we must have $\mathcal{E}_\la(T_{\la-2\delta})=T_\la$ thanks to Corollary \ref{coro:23}. Therefore the formulas in Theorems \ref{thm:aux1}--\ref{thm:aux3} hold in this case.

If $t>1$, we consider the atypical weight $\la-2\delta$, which is also $2$-integer with the first coordinate non-integral. Let $\la$ and $\la-2\delta$ have atypical roots $(1,\sigma,\tau)$ and $(1,\sigma',\tau')\in\Phi_{\overline{1}}$, respectively. It is clear that $(1,\sigma,\tau)\neq(1,\sigma',\tau')$.
By inductive assumption, we know
$$T_{\la-2\delta}=
\left\{
\begin{array}{ll}
M_{\la-2\delta}+\mbox{lower terms}, \hskip5.1cm \mbox{if $\la={}_\ell\la_k$},\\
M_{\la-2\delta}+M_{s_1(\la-2\delta)}+\mbox{lower terms}, \hskip3cm \mbox{if $\la={}_\ell\la_k^1$ and $k\neq0$},\\
M_{\la-2\delta}+M_{s_2(\la-2\delta)}+\mbox{lower terms}, \hskip3cm \mbox{if $\la={}_\ell\la_k^2$ and $k+\ell\neq0$},\\
M_{\la-2\delta}+M_{s_1(\la-2\delta)}+M_{s_2(\la-2\delta)}+M_{s_1s_2(\la-2\delta)}+\mbox{lower terms},\\
\hskip8.7cm \mbox{if $\la={}_\ell\la_k^{12}$ and $k,k+\ell\neq0$.}
\end{array}
\right.$$
where the weights appearing in lower terms are always in the form of $\mu=(x-3,\pm (y-\sigma'),\pm (z-\tau'))$, or $(x-4,y,\pm (z-2\tau'))$ in case of $y=\sigma'=-1$, or $(x-4,\pm (y-2\sigma'), z)$ in case of $z=\tau'=-1$. For any such $\mu$ and any $\alpha\in\Phi$, we have $\mu+\alpha\not\in\mathrm{Irr}\OO_{\la}$ by Theorem \ref{Prop::IrrBlocks}. Thus, applying $\mathcal{E}_{\la}$ to $T_{\la-2\delta}$, we still have that $\mathcal{E}_\la(T_{\la-2\delta})$ equals the right hand side of the formula for $T_\la$ in Theorems \ref{thm:aux1}-- \ref{thm:aux3}. Hence we must have $\mathcal{E}_\la(T_{\la-2\delta})=T_\la$ by Corollary \ref{coro:23} again. Thus, the formulas in Theorem \ref{thm:aux1} -- \ref{thm:aux3} hold.
\end{proof}

\subsection{The case of $\la=(x,y,z)$ with $z\not\in\Z$ and $x,y\in\Z$}\label{znotinZ}
  Thanks to Theorem \ref{Prop::IrrBlocks} and Lemma \ref{lem::equiTs}, we   assume in this section  that $z$ is positively proportional to $\frac{1}{\ka}$  modulo an integer.

Fix $\ell\ge 1$. For $k\in\Z$ we define the following anti-dominant weight:
\begin{align*}
{}_\ell\la_k=(-|k|,-|k-\ell|,k+\frac{\ell}{\zeta}).
\end{align*}

Let $W'\cong \Z_2\times\Z_2$ be subgroup of the Weyl group changing the signs of the first two coordinates. That is, $W'$ is the integral Weyl group of ${}_\ell\la_k$, for any $k$. We shall use the following simplified notations for the Weyl group conjugates of ${}_\ell\la_k$:
\begin{align*}
&{}_\ell\la_k^0=(|k|,-|k-\ell|,k+\frac{\ell}{\zeta}),\\
&{}_\ell\la_k^1=(-|k|,|k-\ell|,k+\frac{\ell}{\zeta}),\\
&{}_\ell\la_k^{01}=(|k|,|k-\ell|,k+\frac{\ell}{\zeta}).
\end{align*}

\begin{lem}
If   $z\not\in\Z$ and $x,y\in\Z$, then $\la\in\{w'{}_\ell\la_k|k\in\Z,w\in W'\}$.
\end{lem}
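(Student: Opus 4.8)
The plan is to read everything off the atypicality relation $p=\zeta d$ of \eqref{p=zetad}. Fix an atypical root $\alpha=(1,\sigma,\tau)\in\Phi^+_{\bar 1}$ with $(\la,\alpha)=0$, so that $p=-x+\sigma y$ and $d=x-\tau z$ (see \eqref{eq:pd}) satisfy $p=\zeta d$. Since $x,y\in\Z$ the number $p$ is an integer; rearranging $p=\zeta(x-\tau z)$ gives $z=\tau x-\tau p\,\zeta^{-1}$.

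I would then set $k:=\tau x$ and $\ell:=-\tau p$, both integers. Note $\ell\neq 0$: otherwise $p=0$, hence $d=0$ by $p=\zeta d$, hence $z=\tau x\in\Z$, contradicting $z\notin\Z$. The formula for $z$ above now reads $z=k+\ell/\zeta$, so the third coordinate of $\la$ already matches that of ${}_\ell\la_k$; and a one-line check gives $|x|=|\tau x|=|k|$ together with $k-\ell=\tau(x+p)=\tau\sigma y$, so that $|k-\ell|=|y|$. Thus $\la=(x,y,z)$ and ${}_\ell\la_k=(-|k|,-|k-\ell|,z)$ differ only in the signs of their first two coordinates, and since $W'$ acts exactly by such sign changes there is $w'\in W'$ with $\la=w'\,{}_\ell\la_k$. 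To arrange $\ell\geq 1$ I would appeal to the standing reduction of this subsection: by Lemma \ref{lem::equiTs} the twisting functor $\Tw_{s_2}$ is an equivalence $\mc O_\la\cong\mc O_{s_2\la}$ (here $\langle\la,(2\ep_2)^\vee\rangle=z\notin\Z$, so the hypothesis holds), and passing to $s_2\la=(x,y,-z)$ replaces $\tau$ by $-\tau$, hence $\ell$ by $-\ell$; after possibly composing with this equivalence we may assume $\ell=-\tau p\geq 1$, which is precisely the condition that $z$ be positively proportional to $1/\zeta$ modulo an integer.

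As in the analogous Lemma of Subsection \ref{xnotinz}, which is dispatched as straightforward, I do not expect a genuine obstacle: the whole argument is an unwinding of $p=\zeta d$. The only slightly delicate points are the bookkeeping of the signs $\sigma,\tau$ when pinning down $w'$, and the non-uniqueness of the decomposition $z=k+\ell/\zeta$ when $\zeta$ is rational; the latter is harmless, since the Lemma only requires us to exhibit one triple $(\ell,k,w')$, and the one produced from the atypicality equation works.
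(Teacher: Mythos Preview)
Your argument is correct and is exactly the kind of straightforward unwinding the paper has in mind: the parallel lemma in Subsection~\ref{xnotinz} is stated without proof as ``straightforward to verify,'' and this one is likewise left unproved. Your use of $p=\zeta d$ to extract $k=\tau x$, $\ell=-\tau p$, and then to match the coordinates up to signs in $W'$, together with the twisting reduction via $\Tw_{s_2}$ to force $\ell\ge 1$, is precisely the intended verification.
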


\begin{rem} The character formulas in this section can be obtained by two methods: We can compute directly using suitable translation functors applied to tilting modules with known character formulas as in Section \ref{xnotinz}. Alternatively, we can first compute the characters of the irreducible $D(2|1;-1-\zeta)$-modules in the block $\OO_{(\frac{1+\zeta}{\zeta}\ell,0,-\ell)}$ using Theorems \ref{thm:aux1}--\ref{thm:aux3} and Theorem \ref{BGGS}. As in Example \ref{example1}, one then computes the characters of the irreducible $D(2|1;-1-\zeta)$-modules in $\OO'_{(\frac{1+\zeta}{\zeta}\ell,0,-\ell)}$. Now, the isomorphism of Section \ref{sec:D:iso2} relates the $D(2|1;\zeta)$-irreducible modules in the block $\OO_{{}_\ell\la_0}$ to $D(2|1;-1-\zeta)$-irreducible modules in $\OO'_{(\frac{1+\zeta}{\zeta}\ell,0,-\ell)}$ in the exact same way as in Section \ref{Sect::Eqivxy0}.
\end{rem}

For completeness we write down the character of the tilting modules.

\begin{thm}\label{thm:aux4} Suppose that $k\not=\ell-1,\ell+1$. Suppose $\la\in\{w'{}_\ell\la_k|k\in\Z,w\in W'\}$ is atypical. Then the formula \eqref{char:std} holds.
\end{thm}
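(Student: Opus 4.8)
The plan is to prove Theorem \ref{thm:aux4} by the same translation-functor bootstrap that yields Theorems \ref{thm:aux1}--\ref{thm:aux3}, with the role played there by the first coordinate now played by the third; as noted in the Remark above, there is also a shorter route via the isomorphism $D(2|1;\zeta)\cong D(2|1;-1-\zeta)$, which I describe at the end. For the direct argument, after the reductions supplied by Theorem \ref{Prop::IrrBlocks} and Lemma \ref{lem::equiTs} one may assume $\la\in\{w'{}_\ell\la_k\mid k\in\Z,\ w'\in W'\}$ with $k\neq\ell-1,\ell+1$. I would let $t\ge 1$ be minimal such that the translate of $\la$ along the non-integral coordinate by $2t\ep_2$ (the analogue of the $2t\delta$-translate used in Section \ref{xnotinz}) is typical, and induct on $t$. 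When $t=1$ this translate is typical and $W'$-conjugate to an anti-dominant weight, so its tilting module is the corresponding sum of Verma modules over the $W'$-orbit by Corollary \ref{Coro::TAtilt} and standard highest-weight category theory; applying the translation functor $\mathcal{E}_\la$ and comparing with the list of possible Verma subquotients, one checks --- using Corollary \ref{coro:23} (equivalently Proposition \ref{prop:flags}) for the lower bound and Theorem \ref{Prop::IrrBlocks}, which describes $\mathrm{Irr}\OO_\la$ exactly, for the upper bound --- that $\mathcal{E}_\la$ of the typical tilting module has precisely the Verma flag on the right-hand side of \eqref{char:std}. As $M_\la$ occurs there with multiplicity one, this module is indecomposable and must be $T_\la$.

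For the inductive step I would note that the $2\ep_2$-translate of $\la$ is again a $2$-integer atypical weight whose only non-integral coordinate is the third, with strictly smaller atypicality depth, so by the inductive hypothesis its Verma flag is the leading $W'$-orbit part of \eqref{char:std} together with certain ``lower'' Verma factors $M_\mu$ of the shape recorded in the proof of Theorems \ref{thm:aux1}--\ref{thm:aux3}. The crucial verification is that each such $\mu$ satisfies $\mu+\alpha\notin\mathrm{Irr}\OO_\la$ for every $\alpha\in\Phi$, so that $\mathcal{E}_\la$ contributes nothing new from the lower terms; granting this, $\mathcal{E}_\la$ of the translate again has Verma flag \eqref{char:std} with $M_\la$ of multiplicity one, hence equals $T_\la$. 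This bookkeeping is the main obstacle, and it is also exactly where the hypothesis $k\neq\ell-1,\ell+1$ is used: for those two excluded values the chain of $2j\ep_2$-translates meets two distinct atypicality hyperplanes, an extra Verma module survives translation, and the standard formula \eqref{char:std} fails --- these exceptional cases being treated separately, in the spirit of Theorems \ref{thm:aux2} and \ref{thm:aux3}.

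Alternatively, one can avoid a fresh computation and instead transport Theorems \ref{thm:aux1}--\ref{thm:aux3} through the isomorphism $\phi\colon D(2|1;-1-\zeta)\to D(2|1;\zeta)$ of Section \ref{sec:D:iso2}. Here I would first use Theorem \ref{BGGS} to convert those tilting formulas (applied with parameter $-1-\zeta$) into irreducible characters in the block $\OO_{{}_\ell\la_0}=\OO_{(\frac{1+\zeta}{\zeta}\ell,0,-\ell)}$ of $D(2|1;-1-\zeta)$, then pass to the category $\OO'$ via the odd-reflection functors exactly as in Section \ref{Sect::Eqivxy0} and Example \ref{example1}, pull back along $\phi$ (which interchanges $\delta$ and $\ep_2$ in characters) to obtain the $D(2|1;\zeta)$-irreducible characters in $\OO_{{}_\ell\la_0}$, apply Theorem \ref{BGGS} once more to return to tilting characters, and finally invoke Lemma \ref{lem::equiTs} to spread the result over every block of the stated type. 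In this approach the delicate point is purely combinatorial: tracking the $\rho$-shifts introduced by the two odd reflections so as to match the excluded parameters $k=\ell-1,\ell+1$ with the excluded parameters $k=1,-\ell+1$ of Theorems \ref{thm:aux1}--\ref{thm:aux3} under the dictionary of Section \ref{Sect::Eqivxy0}.
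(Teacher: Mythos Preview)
Your proposal is correct and matches the paper's own treatment: the paper does not give a separate proof of Theorem \ref{thm:aux4} but instead records in the Remark preceding it exactly the two routes you describe --- a direct translation-functor induction parallel to the proof of Theorems \ref{thm:aux1}--\ref{thm:aux3} (with the non-integral coordinate now the third), or transport of Theorems \ref{thm:aux1}--\ref{thm:aux3} through the isomorphism $D(2|1;\zeta)\cong D(2|1;-1-\zeta)$ of Section \ref{sec:D:iso2} together with Theorem \ref{BGGS} and the odd-reflection bookkeeping of Section \ref{Sect::Eqivxy0}. Your outline of both approaches, including the identification of where the hypothesis $k\neq\ell-1,\ell+1$ enters, is accurate.
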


\begin{thm}\label{thm:aux5} Suppose that $\ell>1$ and $k=\ell-1,\ell+1$. Then we have the following:
\begin{align*}
&T_{\la_{\ell-1}}= M_{\la_{\ell-1}}+M_{\la_{\ell}}+M_{\la_{\ell+1}},\\
&T_{\la_{\ell+1}^0}=M_{\la_{\ell+1}^0}+M_{\la_{\ell+1}}+M_{\la_{\ell}^0}+M_{\la_{\ell}}+M_{\la_{\ell-1}^0}+M_{\la_{\ell-1}}.
\end{align*}
For other $\la$s, the formula \eqref{char:std} holds.
\end{thm}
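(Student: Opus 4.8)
The plan is to run the argument used for Theorems~\ref{thm:aux1}--\ref{thm:aux3} essentially verbatim, staying inside $D(2|1;\zeta)$ but now in the family at hand; as the Remark indicates, one could instead transport those theorems through the isomorphism $D(2|1;\zeta)\cong D(2|1;-1-\zeta)$ of Section~\ref{sec:D:iso2}, but that route is combinatorially heavier, since the odd-reflection $\rho$-shifts of Example~\ref{example1}, composed with the coordinate interchange $\delta\leftrightarrow\ep_2$, do not relabel the weights ${}_\ell\la_k$ by a single affine rule. I describe the direct route.

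First I would reduce, via Theorem~\ref{Prop::IrrBlocks} and the twisting equivalences of Lemma~\ref{lem::equiTs}, to the weights $\la={}_\ell\la_k$ and their $W'$-conjugates; by the classification Lemma of Section~\ref{znotinZ} the statements of Theorems~\ref{thm:aux4}--\ref{thm:aux5} then cover all atypical weights with $z\notin\Z$ and $x,y\in\Z$. Fix such a $\la$ and let $t\ge1$ be the least positive integer for which $\la-2t\delta$ is typical; atypicality of $\la-2t\delta$ amounts to a single linear equation in $t$ for each of the four odd roots, so this $t$ exists, and the induction is on $t$. In the base case $t=1$ the weight $\la-2\delta$ is typical, so by \cite{Gor02} its block is equivalent to a block of $\g_\oo$, and $T_{\la-2\delta}$ is the corresponding explicit direct sum of Verma modules. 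In the inductive step $\la-2\delta$ is again a $2$-integer weight with non-integral $\ep_2$-component, so by the inductive hypothesis $T_{\la-2\delta}$ has one of the forms of Theorems~\ref{thm:aux4}--\ref{thm:aux5}; its Verma factors are the modules $M_{w'(\la-2\delta)}$ with $w'\in W'$, together with ``lower terms'', and by Theorem~\ref{Prop::IrrBlocks} neither those lower terms nor their shifts by weights of the adjoint module can lie in $\mathrm{Irr}\OO_\la$.

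Next I would apply the translation functor $\mathcal{E}_\la$ to $T_{\la-2\delta}$. Tensoring with the adjoint module and projecting onto $\OO_\la$ preserves both Verma and dual-Verma flags, so $\mathcal{E}_\la(T_{\la-2\delta})$ is tilting, hence a finite direct sum of indecomposable tilting modules; moreover $(\mathcal{E}_\la(T_{\la-2\delta}):M_\la)=1$, and by Corollary~\ref{coro:23} together with Proposition~\ref{prop:flags} every $M_\mu$ occurring in its Verma flag already occurs in $T_\la$. Comparing Verma-flag multiplicities then forces $\mathcal{E}_\la(T_{\la-2\delta})=T_\la$, which is the asserted character.

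The substantive part, and the step I expect to be hardest, is the bookkeeping behind the claim that $\mathcal{E}_\la(T_{\la-2\delta})$ has the predicted Verma flag: one must compute the weights of the adjoint module against the linkage of Theorem~\ref{Prop::IrrBlocks}, separately at the generic labels and at the exceptional labels $k=\ell\pm1$ (at which $\la$ sits one step from the wall $k=\ell$ of the $W'$-action), and check in each case that the resulting set of Verma modules equals the right-hand side of the formula in Theorems~\ref{thm:aux4}--\ref{thm:aux5}; equivalently, that in the present setting the necessary condition of Proposition~\ref{prop:flags} is also sufficient. The exceptional labels are exactly those at which a Verma subquotient of $\mathcal{E}_\la(T_{\la-2\delta})$ could a priori split off as a direct summand, and excluding that possibility is what yields the shorter formulas for $T_{\la_{\ell-1}}$ and $T_{\la_{\ell+1}^0}$ in Theorem~\ref{thm:aux5}.
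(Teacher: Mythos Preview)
Your approach is essentially the one the paper follows: the Remark in Section~\ref{znotinZ} explicitly says that the formulas there can be obtained by the direct translation-functor method ``as in Section~\ref{xnotinz}'', and your outline of the induction on $t$ (with base case $\la-2\delta$ typical and inductive step $\la-2\delta$ atypical with a different odd root, followed by applying $\mathcal{E}_\la$ and invoking Corollary~\ref{coro:23}) reproduces the paper's joint proof of Theorems~\ref{thm:aux1}--\ref{thm:aux3} almost word for word. This is correct.

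One small correction to your final paragraph: the exceptional formulas for $T_{\la_{\ell-1}}$ and $T_{\la_{\ell+1}^0}$ are not \emph{shorter} than the standard formula~\eqref{char:std}; they are longer. At $k=\ell-1$ the weight $\la_{\ell-1}$ has second coordinate $-1$, and $\la_\ell=\la_{\ell-1}-\alpha_0$ hits the wall where the second coordinate vanishes, so $\la_\ell$ is atypical for a \emph{second} odd root. Corollary~\ref{coro:23}(3) (equivalently Proposition~\ref{prop:flags}(5)) then forces the extra Verma factor $M_{\la_{\ell+1}}$ beyond what~\eqref{char:std} would predict. The indecomposability check via Corollary~\ref{coro:23} shows that $\mathcal{E}_\la(T_{\la-2\delta})$ already contains this extra factor and that all of its Verma subquotients are accounted for by $T_\la$, so no summand splits off; this is what makes the formula longer, not shorter. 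The phenomenon is a wall-crossing (double-atypicality) effect, not a cancellation.
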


\begin{thm}\label{thm:aux6} For $\ell=1$ and $k=0,2$, we have the following formulas for the tilting modules:
\begin{align*}
&T_{\la_{0}}= M_{\la_{0}}+M_{\la_{-1}}+M_{\la_{1}}+M_{\la_{2}},\\
&T_{\la_{2}^0}= M_{\la_{2}^0}+M_{\la_{2}}+M_{\la^{0}_{1}} + M_{\la_{1}}+M_{\la_{0}}.
\end{align*}
Furthermore, for other $\la$s, the standard formula \eqref{char:std} holds.
\end{thm}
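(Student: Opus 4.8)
The plan is to establish Theorem \ref{thm:aux6} in tandem with Theorems \ref{thm:aux4} and \ref{thm:aux5}, by reproducing the translation functor argument that proves Theorems \ref{thm:aux1}--\ref{thm:aux3}, and then to note that the isomorphism $D(2|1;\zeta)\cong D(2|1;-1-\zeta)$ of Section \ref{sec:D:iso2} gives a second, shorter derivation. The key observation is that translation along $-2\ep_2$ now plays the role that translation along $-2\delta$ played in Section \ref{xnotinz}: the two correspond under the isomorphism of Section \ref{sec:D:iso2}, which interchanges $\delta$ and $\ep_2$ on weights and carries the $x$-nonintegral $2$-integer blocks onto the $z$-nonintegral ones.

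For the direct argument I would fix an atypical $\la\in\{w'\,{}_{1}\la_{k}\mid w'\in W'\}$ with $k\in\{0,2\}$ and let $t\ge 1$ be minimal with $\la-2t\ep_2$ typical (so $t=1$ for generic $\zeta$, the finitely many rational $\zeta$ forcing a larger $t$ being treated by the same induction), and then induct on $t$. In the base case $\la-2\ep_2$ is typical, hence anti-dominant up to $W'$-conjugacy, so $T_{\la-2\ep_2}$ is an explicit sum of $W'$-translates of $M_{\la-2\ep_2}$ by Corollary \ref{Coro::TAtilt}; computing the weights of the adjoint tensor product $\g\otimes T_{\la-2\ep_2}$ projected onto $\OO_\la$ then shows that $\mathcal{E}_\la(T_{\la-2\ep_2})$ carries exactly the character on the right of the asserted formula. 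In the inductive step, the hypothesis applied to the $2$-integer atypical weight $\la-2\ep_2$ writes $T_{\la-2\ep_2}$ as $M_{\la-2\ep_2}$ plus its relevant $W'$-conjugates plus lower terms, and since by Theorem \ref{Prop::IrrBlocks} no weight $\mu$ occurring in the lower terms satisfies $\mu+\alpha\in\mathrm{Irr}\,\OO_\la$ for any root $\alpha$, applying $\mathcal{E}_\la$ reproduces the asserted character again.

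The main obstacle is promoting this character equality to the module isomorphism $\mathcal{E}_\la(T_{\la-2\ep_2})\cong T_\la$, i.e.\ showing the module is indecomposable. I would argue exactly as for Theorems \ref{thm:aux1}--\ref{thm:aux3}: Corollary \ref{coro:23}(1) gives $(\mathcal{E}_\la(T_{\la-2\ep_2}):M_\la)=1$, so a single indecomposable summand $T_\la$ contains $M_\la$ in its Verma flag, and Corollary \ref{coro:23}(2)--(4), equivalently Proposition \ref{prop:flags}, forces each Verma module appearing in the character into the Verma flag of that summand along a chain of odd-reflection and $s_i$ moves, so nothing is left to split off. The delicate point is precisely the boundary values $\ell=1$, $k\in\{0,2\}$, where the formulas acquire the extra factors $M_{\la_{-1}},M_{\la_1},M_{\la_2}$ (respectively $M_{\la_1^0},M_{\la_1},M_{\la_0}$) and the standard formula \eqref{char:std} fails; there one has to check by hand, using Corollary \ref{coro:23}(3) and Proposition \ref{prop:flags}(5)--(6), that these extra Vermas are absorbed into $T_\la$ rather than forming separate tilting summands.

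For the shorter route I would instead combine Theorems \ref{thm:aux1}--\ref{thm:aux3} with Soergel duality (Theorem \ref{BGGS}) to read off the irreducible characters of $D(2|1;-1-\zeta)$ in the block $\OO_{(\frac{1+\zeta}{\zeta},0,-1)}$, transfer them to the category $\OO'$ of Section \ref{sec:oo'} by odd reflections exactly as in Example \ref{example1}, and push forward along the isomorphism of Section \ref{sec:D:iso2}, which identifies that block with $\OO_{{}_{1}\la_{0}}$ for $D(2|1;\zeta)$ in the same manner as in Section \ref{Sect::Eqivxy0}; the remaining blocks $\OO_{w'\,{}_{1}\la_{k}}$ are then reached by the twisting functors of Lemma \ref{lem::equiTs}, and dualizing back by Theorem \ref{BGGS} turns these irreducible characters into the tilting characters of Theorem \ref{thm:aux6}, the boundary cases $k\in\{0,2\}$ corresponding to the boundary cases of Theorems \ref{thm:aux2}--\ref{thm:aux3} under the reindexing induced by $\delta\leftrightarrow\ep_2$.
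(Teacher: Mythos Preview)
Your proposal is correct and matches the paper's own treatment: the paper proves Theorems \ref{thm:aux1}--\ref{thm:aux3} by the translation-functor induction on $t$ with $\mathcal{E}_\la(T_{\la-2\delta})$, and then in a Remark preceding Theorems \ref{thm:aux4}--\ref{thm:aux6} states that these can be obtained either by the analogous direct translation-functor computation or via the isomorphism $D(2|1;\zeta)\cong D(2|1;-1-\zeta)$ of Section \ref{sec:D:iso2}, exactly the two routes you outline. Your identification of $-2\ep_2$ as the correct translation direction (corresponding to $-2\delta$ under the $\delta\leftrightarrow\ep_2$ swap) and your handling of the boundary cases via Corollary \ref{coro:23} and Proposition \ref{prop:flags} are both on target.
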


\subsection{The case of $\la=(x,y,z)$ with $y\not\in\Z$ and $x,z\in\Z$} \label{ynotinZ}

 Thanks to Theorem \ref{Prop::IrrBlocks} and Lemma \ref{lem::equiTs}, we   assume in this section  that $y$ is positively proportional to $\ell \ka$ modulo an integer.

Fix $\ell\ge 1$. For $k\in\Z$ we define the following anti-dominant weight:
\begin{align*}
{}_\ell\la_k=(-|k|,k+\ell\zeta,-|k-\ell|).
\end{align*}

Let $W'\cong \Z_2\times\Z_2$ be subgroup of the Weyl group changing the signs of the first and the last coordinates. That is, $W'$ is the integral Weyl group of ${}_\ell\la_k$, for any $k$. We shall use the following simplified notations for the Weyl group conjugates of ${}_\ell\la_k$:
\begin{align*}
&{}_\ell\la_k^0=(|k|,k+\ell\zeta,-|k-\ell|),\\
&{}_\ell\la_k^2=(-|k|,k+\ell\zeta,|k-\ell|),\\
&{}_\ell\la_k^{02}=(|k|,k+\ell\zeta,|k-\ell|).
\end{align*}

The following is straightforward.

\begin{lem}
If   $y\not\in\Z$ and $x,z\in\Z$, then $\la\in\{w'{}_\ell\la_k|k\in\Z,w\in W'\}$.
\end{lem}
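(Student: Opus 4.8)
The plan is to follow the same template as the (omitted) proofs of the corresponding lemmas in Sections~\ref{xnotinz} and \ref{znotinZ}, simply permuting the roles of the three coordinates: write out the atypicality equation explicitly, solve for the non-integral coordinate in terms of the two integral ones, and recognize the result as a member of a $W'$-orbit of some ${}_\ell\la_k$. Since the atypical root constrains $\la$ through a single linear equation, this is essentially a bookkeeping exercise once the standing normalization of this section is taken into account.

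Concretely, suppose $\la=(x,y,z)$ is atypical with $(\la,\alpha)=0$ for $\alpha=(1,\sigma,\tau)\in\Phi^+_{\bar 1}$, $\sigma,\tau\in\{\pm1\}$. By \eqref{eq:pd} and \eqref{p=zetad} this reads $-x+\sigma y=\zeta(x-\tau z)$, and multiplying by $\sigma$ gives
\[
y=\sigma x+\sigma(x-\tau z)\,\zeta .
\]
Here $x,z\in\Z$, so $\sigma x\in\Z$ and $\sigma(x-\tau z)\in\Z$; moreover $x-\tau z\ne 0$, for otherwise $y=\sigma x\in\Z$, contradicting $y\notin\Z$. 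Since $\langle\la,\alpha_1^\vee\rangle=y\notin\Z$, Lemma~\ref{lem::equiTs} applied to $s_1$ lets us replace $\la$ by $s_1\la=(x,-y,z)$ if necessary, which flips the sign of the coefficient $\sigma(x-\tau z)$; so by the standing reduction of this section we may assume $\ell:=\sigma(x-\tau z)\ge 1$, whence $y=\sigma x+\ell\zeta$. Setting $k:=\sigma x\in\Z$, the middle coordinate of $\la$ is exactly $k+\ell\zeta$. For the two outer coordinates one computes $x=\sigma k\in\{k,-k\}$ and, using $x-\tau z=\sigma\ell$, $z=\sigma\tau(k-\ell)\in\{k-\ell,-(k-\ell)\}$. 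Hence $\la=(\sigma k,\,k+\ell\zeta,\,\sigma\tau(k-\ell))$ differs from ${}_\ell\la_k=(-|k|,k+\ell\zeta,-|k-\ell|)$ only by sign changes in the first and third coordinates, i.e.\ $\la\in W'\,{}_\ell\la_k\subseteq\{w'\,{}_\ell\la_{k}\mid k\in\Z,\ w'\in W'\}$, as desired.

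I do not expect any serious obstacle: the content is the single displayed computation above together with the sign normalization. The two points that require a moment's care are (i) checking that the reduction ``$\ell\ge 1$'' is genuinely available — this is where one invokes the equivalence $\mc O_\la\cong\mc O_{s_1\la}$ of Lemma~\ref{lem::equiTs}, legitimate precisely because $\langle\la,\alpha_1^\vee\rangle=y\notin\Z$, together with Theorem~\ref{Prop::IrrBlocks} describing the ambient block — and (ii) when $\zeta$ is rational, making sure that the pair $(k,\ell)$ is the one intended in the definition of ${}_\ell\la_k$; but since $k$ and $\ell$ are read off directly from $x$, $z$ and the atypical root rather than from a decomposition of $y$ itself, no ambiguity actually arises.
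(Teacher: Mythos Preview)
Your proof is correct and is the direct verification the paper has in mind when it calls the lemma ``straightforward''. The only cosmetic point is that $\ell$ is already fixed by the standing normalization at the start of the section, so the step should be phrased as identifying $\sigma(x-\tau z)$ with that fixed $\ell$ rather than defining $\ell$ anew---but the substance is the same.
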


\begin{rem} The characters of the tilting modules in this section are obtained as follows. Recall that the isomorphism in Section \ref{sec:D:iso1} induces an equivalence of highest weight categories between the blocks $\OO_{{}_\ell\la_0}$ of $D(2|1;\zeta)$-modules and the block $\OO_{(0,-\ell,\ell\zeta)}$ of $D(2|1;\frac{1}{\zeta})$-modules. Thus, tilting character of the latter, computed in Theorems \ref{thm:aux4}--\ref{thm:aux6}, also computes the tilting character of the former.
\end{rem}

For completeness we write down the character of the tilting modules.

\begin{thm} Suppose that $k\not=\ell-1,\ell+1$. Suppose $\la\in\{w'{}_\ell\la_k|k\in\Z,w\in W'\}$ is atypical. Then the formula \eqref{char:std} holds.
\end{thm}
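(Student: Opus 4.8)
The plan is to deduce the identity from the $z\notin\Z$ case already recorded in Theorem~\ref{thm:aux4}, transporting it through the isomorphism $\phi\colon D(2|1;\tfrac1\zeta)\to D(2|1;\zeta)$ of Section~\ref{sec:D:iso1}. Since $\phi$ carries the standard Borel of $D(2|1;\tfrac1\zeta)$ onto the standard Borel of $D(2|1;\zeta)$, pullback along $\phi$ is an equivalence of highest weight categories; hence it identifies Verma modules with Verma modules, indecomposable tilting modules with indecomposable tilting modules, and preserves all Verma-flag multiplicities, while acting on weights by the interchange $(x,y,z)\mapsto(x,z,y)$ of $\ep_1$ and $\ep_2$.

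First I would set up the weight dictionary. Writing ${}_\ell\la_k=(-|k|,k+\ell\zeta,-|k-\ell|)$ as in the present subsection, the interchange $(x,y,z)\mapsto(x,z,y)$ sends it to $(-|k|,-|k-\ell|,k+\ell\zeta)$, which is exactly the anti-dominant weight ${}_\ell\la_k$ of Section~\ref{znotinZ} for the parameter $\tfrac1\zeta$ (there the last coordinate is $k+\tfrac{\ell}{1/\zeta}=k+\ell\zeta$); moreover the group $W'$ used here (changing signs of the first and third coordinates) is carried to the group $W'$ of Section~\ref{znotinZ} (changing signs of the first two coordinates), and the named conjugates ${}_\ell\la_k^0,{}_\ell\la_k^2,{}_\ell\la_k^{02}$ correspond to ${}_\ell\la_k^0,{}_\ell\la_k^1,{}_\ell\la_k^{01}$ there. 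Thus for atypical $\la\in\{w'{}_\ell\la_k\}$ the weight $\bar\la:=(x,z,y)$ is an atypical weight of the $z\notin\Z$, $x,y\in\Z$ type for $D(2|1;\tfrac1\zeta)$ sitting at the same index $k$; as $k\neq\ell-1,\ell+1$, Theorem~\ref{thm:aux4} gives $T_{\bar\la}=\sum_{\mu\le\bar\la}M_\mu$, the sum running over $\{w'\bar\la,\,w'(\bar\la-\alpha): w'\in W',\ \alpha\in\Phi^+_{\bar 1},\ (\bar\la,\alpha)=0\}$ with $\mu\le\bar\la$.

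It then remains to pull this back. Because $\phi$ preserves the standard positive system, the equivalence preserves the dominance order $\le$ and the set $\Phi^+_{\bar 1}$ (on which $\ep_1\leftrightarrow\ep_2$ acts by a permutation fixing the condition $(\la,\alpha)=0$) and intertwines the two copies of $W'$; hence the right-hand side of the formula for $\bar\la$ is carried precisely onto the right-hand side of \eqref{char:std} for $\la$, and the equivalence turns the above equality of Verma-flag multiplicities into $\ch T_\la=\sum_{\mu\le\la}M_\mu$. The whole argument is bookkeeping; the one point I would verify with care is that the exceptional set $\{\ell-1,\ell+1\}$ of Section~\ref{znotinZ} coincides with the set excluded in the present hypothesis, so that no non-standard tilting character (Theorems~\ref{thm:aux5}--\ref{thm:aux6}) can intervene — I expect this matching to be the only real obstacle. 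Alternatively one can avoid $\phi$ and argue directly as in Section~\ref{xnotinz}: take $t$ minimal with $\la-2t\delta$ typical, start from the known tilting module $T_{\la-2t\delta}$, and apply the translation functor $\E_\la$ repeatedly, invoking Corollary~\ref{coro:23} to see the output is $T_\la$; but the route through $\phi$ is shorter.
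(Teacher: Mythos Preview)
Your proposal is correct and follows precisely the approach the paper takes: the Remark immediately preceding this theorem states that the tilting characters in this subsection are obtained by transporting Theorems~\ref{thm:aux4}--\ref{thm:aux6} through the isomorphism $\phi$ of Section~\ref{sec:D:iso1}, which is exactly what you do. Your bookkeeping on the weight dictionary, the matching of $W'$, and the preservation of the exceptional set $\{\ell-1,\ell+1\}$ is accurate.
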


\begin{thm} Suppose that $\ell>1$ and $k=\ell-1, \ell+1$. Then we have the following:
\begin{align*}
&T_{\la_{\ell-1}}= M_{\la_{\ell-1}}+M_{\la_{\ell}}+M_{\la_{\ell+1}},\\
&T_{\la_{\ell+1}^0}=M_{\la_{\ell+1}^0}+M_{\la_{\ell+1}}+M_{\la_{\ell}^0}+M_{\la_{\ell}}+M_{\la_{\ell-1}^0}+M_{\la_{\ell-1}}.
\end{align*}
For other $\la$s, the formula \eqref{char:std} holds.
\end{thm}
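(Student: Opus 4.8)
The plan is to derive these formulas from their counterparts in the $z\notin\Z$ case---Theorems \ref{thm:aux4} and \ref{thm:aux5}---by transporting along the isomorphism $\phi\colon D(2|1;\frac{1}{\zeta})\to\D$ of Section \ref{sec:D:iso1}. Recall that $\phi$ carries the standard triangular decomposition of $D(2|1;\frac{1}{\zeta})$ onto that of $\D$, so that pullback along $\phi$ is an equivalence of highest weight categories between the BGG category of $\D$ and that of $D(2|1;\frac{1}{\zeta})$; under it a $\D$-module of weight $(x,y,z)$ becomes a $D(2|1;\frac{1}{\zeta})$-module of weight $(x,z,y)$, and on characters this amounts to interchanging $\ep_1$ and $\ep_2$. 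As an equivalence of highest weight categories, $\phi$-pullback sends Verma modules to Verma modules, irreducibles to irreducibles, and indecomposable tilting modules to indecomposable tilting modules (matching up highest weights); it respects blocks and preserves anti-dominance and atypicality, since the standard Borel and the invariant bilinear form are respected.

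I would then set up the dictionary between the two parametrizations. Swapping the last two coordinates of the weight ${}_\ell\la_k=(-|k|,\,k+\ell\zeta,\,-|k-\ell|)$ of Section \ref{ynotinZ} yields $(-|k|,\,-|k-\ell|,\,k+\ell\zeta)$; since $\ell\zeta=\ell/(1/\zeta)$, this is exactly the weight denoted ${}_\ell\la_k$ in Section \ref{znotinZ} for the parameter $\frac{1}{\zeta}$. Because, after the coordinate swap, the reflection flipping the $\ep_2$-coordinate corresponds to the reflection flipping the $\ep_1$-coordinate while the $\delta$-reflection is unchanged, the $W'$-conjugates correspond as ${}_\ell\la_k^0\leftrightarrow{}_\ell\la_k^0$, ${}_\ell\la_k^2\leftrightarrow{}_\ell\la_k^1$, ${}_\ell\la_k^{02}\leftrightarrow{}_\ell\la_k^{01}$; and the set $\{w'\la,\,w'(\la-\alpha)\mid w'\in W',\ \alpha\in\Phi^+_{\bar 1},\ (\la,\alpha)=0\}$ occurring in \eqref{char:std}, as well as the distinguished range $k=\ell\pm1$, is preserved. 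Thus the validity of the standard formula transports block by block.

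Combining these, for $\ell>1$ and $k=\ell-1,\ell+1$ the $\D$-tilting modules $T_{\la_{\ell-1}}$, $T_{\la_{\ell+1}^0}$ and $T_{\la}$ (for the other $\la$) pull back under $\phi$ to the $D(2|1;\frac{1}{\zeta})$-tilting modules of the same labels in the corresponding $z\notin\Z$ block, whose characters are supplied by Theorem \ref{thm:aux5} (for $T_{\la_{\ell-1}}$ and $T_{\la_{\ell+1}^0}$) and Theorem \ref{thm:aux4} (for the remaining $\la$), applied with parameter $\frac{1}{\zeta}$. Interchanging $\ep_1$ and $\ep_2$ back---equivalently swapping the last two coordinates of every weight that appears---converts these Verma-module expressions into precisely the formulas claimed here, the dictionary above identifying each label on the nose. (Alternatively one could argue directly by induction on the typicality defect, as in the proof of Theorems \ref{thm:aux1}--\ref{thm:aux3}: apply a suitable translation functor $\mathcal E_\la$ to a tilting module of smaller defect and use Corollary \ref{coro:23} for indecomposability; the isomorphism argument makes this unnecessary.) I expect the only subtle point, and the only place anything could go wrong, to be the bookkeeping of the previous paragraph---checking that the two $\la$-parametrizations, their $W'$-orbits and the ranges $k=\ell\pm1$ match term by term under the coordinate swap; everything else is formal once $\phi$-pullback is known to be an equivalence of highest weight categories.
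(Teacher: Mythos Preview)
Your proposal is correct and follows essentially the same approach as the paper: the paper's Remark in Section \ref{ynotinZ} explicitly states that the tilting characters there are obtained from those in Section \ref{znotinZ} (Theorems \ref{thm:aux4}--\ref{thm:aux6}) via the isomorphism of Section \ref{sec:D:iso1}, which is exactly the transport-along-$\phi$ argument you outline. Your bookkeeping of the dictionary between the two parametrizations (${}_\ell\la_k^0\leftrightarrow{}_\ell\la_k^0$, ${}_\ell\la_k^2\leftrightarrow{}_\ell\la_k^1$, ${}_\ell\la_k^{02}\leftrightarrow{}_\ell\la_k^{01}$) is correct and in fact more detailed than what the paper spells out.
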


\begin{thm} For $\ell=1$ and $k=0,2$, we have the following formulas for the tilting modules:
\begin{align*}
&T_{\la_{0}}= M_{\la_{0}}+M_{\la_{-1}}+M_{\la_{1}}+M_{\la_{2}},\\
&T_{\la_{2}^0}= M_{\la_{2}^0}+M_{\la_{2}}+M_{\la^{0}_{1}} + M_{\la_{1}}+M_{\la_{0}}.
\end{align*}
Furthermore, for other $\la$s, the standard formula \eqref{char:std} holds.
\end{thm}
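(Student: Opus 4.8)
The plan is to derive these formulas from Theorems \ref{thm:aux4}--\ref{thm:aux6} by transporting them through the isomorphism $D(2|1;\zeta)\cong D(2|1;\frac{1}{\zeta})$ of Section \ref{sec:D:iso1}, exactly as announced in the Remark above. Recall from Section \ref{sec:D:iso1} that the map $\phi\colon D(2|1;\frac{1}{\zeta})\to D(2|1;\zeta)$ carries the standard Borel subalgebra to the standard Borel subalgebra, so that pullback along $\phi$ defines an equivalence of highest weight categories between $\OO$ for $D(2|1;\zeta)$ and $\OO$ for $D(2|1;\frac{1}{\zeta})$, under which a $D(2|1;\zeta)$-weight $(x,y,z)$ corresponds to the $D(2|1;\frac{1}{\zeta})$-weight $(x,z,y)$. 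Since an equivalence of highest weight categories sends Verma modules to Verma modules and indecomposable tilting modules to indecomposable tilting modules, once the blocks and weights are matched up the desired formulas follow from those already established in Section \ref{znotinZ}.

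The first step is to set up the dictionary of weights. Write ${}_\ell\la_k=(-|k|,k+\ell\zeta,-|k-\ell|)$ for the antidominant weight of the present section. Its partner under the equivalence is $(-|k|,-|k-\ell|,k+\ell\zeta)$, which is precisely the antidominant weight ${}_\ell\la_k$ of Section \ref{znotinZ} evaluated with the parameter $\frac{1}{\zeta}$ in place of $\zeta$ (indeed $k+\tfrac{\ell}{1/\zeta}=k+\ell\zeta$). The subgroup $W'$ of the present section, flipping the first and third coordinates, corresponds under the coordinate swap $(x,y,z)\leftrightarrow(x,z,y)$ to the subgroup $W'$ of Section \ref{znotinZ} flipping the first two coordinates; accordingly the Weyl conjugates match up as ${}_\ell\la_k^{0}\leftrightarrow{}_\ell\la_k^{0}$, ${}_\ell\la_k^{2}\leftrightarrow{}_\ell\la_k^{1}$ and ${}_\ell\la_k^{02}\leftrightarrow{}_\ell\la_k^{01}$. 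Thus the block $\OO_{{}_\ell\la_0}$ for $D(2|1;\zeta)$ is carried onto the block $\OO_{{}_\ell\la_0}$ for $D(2|1;\frac{1}{\zeta})$ treated in Theorems \ref{thm:aux4}--\ref{thm:aux6}. One also records that atypicality, the set $\Phi^+_{\bar 1}$ of positive odd roots, and the dominance order are preserved by $\phi$ (the swap $\ep_1\leftrightarrow\ep_2$ permutes $\Phi^+$ among itself), so the index sets entering the standard formula \eqref{char:std} transport correctly, and in particular \eqref{char:std} for an atypical weight of the present section is equivalent to \eqref{char:std} for its partner in Section \ref{znotinZ}.

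With the dictionary in place the remaining step is purely mechanical: specialize to $\ell=1$, $k=0,2$, rewrite the formulas of Theorem \ref{thm:aux6} with $\zeta$ replaced by $\frac{1}{\zeta}$, and translate each Verma character back through the equivalence. This reproduces verbatim the asserted expressions for $T_{\la_0}$ and $T_{\la_2^0}$, and also the standard formula \eqref{char:std} for the remaining atypical weights $\la_0^2,\la_2,\la_2^2,\la_2^{02}$ in these two families.

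The only delicate point is bookkeeping, not mathematics: one must keep straight which coordinate is the non-integral one (and is fixed by the swap), verify that the antidominance and atypicality conditions used to parametrize $\mathrm{Irr}\OO_\la$ via Theorem \ref{Prop::IrrBlocks} are compatible with the coordinate swap, and confirm that substituting $\frac{1}{\zeta}$ for $\zeta$ in the weights ${}_\ell\la_k$ of Section \ref{znotinZ} returns exactly the weights of the present section. None of this is hard. As an alternative not using the isomorphism, one could run the induction on typicality depth of Section \ref{xnotinz} word for word, applying the translation functors $\mathcal{E}_\la$ to tilting modules of typical highest weight and invoking Corollary \ref{coro:23} to force indecomposability; this is strictly more laborious and yields nothing extra here.
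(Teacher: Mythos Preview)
Your proposal is correct and follows exactly the paper's own approach: the Remark preceding the theorem states that the tilting characters in this section are obtained by transporting Theorems \ref{thm:aux4}--\ref{thm:aux6} through the highest-weight-category equivalence induced by the isomorphism $\phi\colon D(2|1;\tfrac{1}{\zeta})\to D(2|1;\zeta)$ of Section \ref{sec:D:iso1}, and your dictionary $(x,y,z)\leftrightarrow(x,z,y)$, ${}_\ell\la_k\leftrightarrow{}_\ell\la_k$, $\la_k^2\leftrightarrow\la_k^1$, $\la_k^{02}\leftrightarrow\la_k^{01}$ is precisely the one the paper has in mind.
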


\begin{rem}
For simple Lie algebras it is known that the irreducible characters of non-integral highest weight modules can be computed by the irreducible characters of integral highest weight modules of smaller rank simple Lie algebras, see, e.g., \cite{Lu84, Soe90}.  For Lie superalgebras, such an {\em integral reduction} to smaller rank is only known for the general linear Lie superalgebras \cite{CMW13}, the queer Lie superalgebras \cite{Ch16}, and some other very special cases \cite{GK15}. Comparing the characters in Sections \ref{xnotinz}--\ref{ynotinZ} with the integral blocks of classical simple Lie superalgebras with degree atypicality one, one can show that the $2$-integer blocks of $D(2|1;\zeta)$ are not equivalent to any of such integral blocks. Thus, these non-integral blocks of $D(2|1;\zeta)$ provide examples where the integral reductions to smaller rank simple basic classical Lie superalgebras fail. On the other hand, in the case when at most one among the $x,y,z$ is an integer we have integral reduction to smaller rank basic Lie superalgebras as illustrated in Section \ref{sec:reduction} and suggested by \cite{GK15}.
\end{rem}

\section{Primitive spectrum of $\D$} \label{Section::Prim}
As an application of the results in Sections  \ref{sec:reduction} and \ref{Sect::ChFormulae}, we   prove that the inclusions of primitive ideals in non-integral blocks can be computed by the Ext$^1$-quiver of $\D.$

By \cite[Theorem 15.2.4]{Mu12}, every primitive ideal in $U(\mf g)$ is in the set
$$ \{J_\la:=\text{Ann}_{U(\g)}L_\la\;|\,\lambda\in\mf h^\ast\},$$
where $\text{Ann}_{U(\g)}L_\la$ denotes the annihilator of $L_\la$ in $U(\g)$.

We recall the {\em completed Kazhdan-Lusztig order} $\KL^c$ on~$\h^\ast$ as defined in \cite[Section 2.9]{Co16} (also see, \cite[Definition~5.3]{CM16}, as the partial quasi-order transitively generated by $\nu\KL^c\lambda$ when~$L_\la$ is a subquotient of $\Tw_sL_\nu$ for some simple reflection~$s$.  A description of the primitive spectrum  for an arbitrary basic classical Lie superalgebra has been obtained in \cite{Co16} in terms of the ordering $\KL^c$.
\begin{thm}{\rm (}\cite[Theorem 7.2]{Co16}{\rm )} \label{thm::Kevinaminthm}
	For any $\lambda,\nu\in \h^\ast$ we have
	$$J_\nu\subset J_\la\quad\Leftrightarrow\quad \nu\KL^c\lambda.$$
\end{thm}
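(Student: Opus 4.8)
The plan is to prove the two implications separately, the forward one being essentially formal and the reverse one carrying the real content. For $\nu\KL^c\la\Rightarrow J_\nu\subseteq J_\la$, transitivity of both relations reduces us to the generating step: if $L_\la$ is a subquotient of $\Tw_{s}L_\nu$ for a simple reflection $s$, then $J_\nu\subseteq J_\la$. First I would use that $\Tw_{s}$ is given by tensoring with the $U(\g)$-bimodule ${}^{\varphi}(U'_\alpha/U)$, so that $\text{Ann}_{U(\g)}(\Tw_{s}M)\supseteq\text{Ann}_{U(\g)}(M)$ for every $M\in\OO$; here one has to check that the twisting automorphism $\varphi$ does not move the block---it fixes $\mc Z(\g)$, and whenever $\langle\la,\alpha^\vee\rangle\notin\Z$ it preserves $\OO_\la$ by Lemma \ref{lem::equiTs}---so that the annihilator bound survives the twist. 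Since a subquotient has annihilator containing that of the ambient module, this gives $J_\nu\subseteq J_\la$, and iterating along a $\KL^c$-chain settles this direction.

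For the reverse implication $J_\nu\subseteq J_\la\Rightarrow\nu\KL^c\la$, which I expect to be the crux, I would proceed in two steps. The first is a Harish--Chandra bimodule reduction: the inclusion $J_\nu\subseteq J_\la$ yields a nonzero morphism between the associated quotients of $U(\g)$, and bimodules arising in category $\OO$ are controlled, up to composition factors, by projective (translation) functors; this should force $L_\la$ to occur as a composition factor of $\Theta L_\nu$ for some projective functor $\Theta$, the super analogue of Joseph's characterisation, available because $U(\g)$ is module-finite over its centre on the relevant quotients and satisfies the double-centraliser property underlying the Soergel-type machinery. The second step is to pass from projective functors to twisting functors: every projective functor within a block is a direct summand of a composition of wall-crossing functors $\theta_{s_i}$, and on the level of composition factors each $\theta_{s}L_\mu$ is assembled from the modules $\Tw_{s}L_{\mu'}$ via the short exact sequences linking $\Tw_{s}$, the identity functor, and its adjoint. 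Reorganising the resulting chain of composition-factor relations by the star action $\ast$ of the infinite Coxeter group $\tilde W$ of \cite{GG13,CM16} should then produce a $\KL^c$-chain from $\nu$ to $\la$.

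The hard part will be controlling this last comparison: a priori the order defined by projective functors is coarser than $\KL^c$, and one must verify that the discrepancy coming from odd reflections and atypicality is exactly the correction encoded by the star action, so that passing between the two orders introduces neither spurious nor missing relations. A secondary but pervasive difficulty is keeping all functors in play---twisting, projective, and their mutual comparisons---inside a single block of $\OO$, which once more reduces to controlling the action of $\varphi$ on $\mc Z(\g)$.
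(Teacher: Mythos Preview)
The paper does not prove this statement: Theorem~\ref{thm::Kevinaminthm} is quoted from \cite[Theorem~7.2]{Co16} and used as a black box in Section~\ref{Section::Prim}, so there is no argument in the paper to compare your proposal against.

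As for the proposal itself, the forward direction already has a gap. From $\Tw_s(-)={}^{\varphi}(U'_\alpha/U)\otimes_U(-)$ you cannot conclude $\text{Ann}_{U(\g)}(\Tw_sM)\supseteq\text{Ann}_{U(\g)}(M)$ by a generic bimodule-tensor argument: for $u\in\text{Ann}(M)$ one has $u\cdot(x\otimes m)=(\varphi(u)x)\otimes m$, and nothing forces $\varphi(u)x$ to lie in $(U'_\alpha/U)\cdot\text{Ann}(M)$ so that it could be moved across the tensor and kill $m$. One needs something specific to twisting functors---the adjunction with Joseph's completion functor $G_s$, the fact that $\varphi$ is inner, and ultimately the comparison with $\g_\oa$---to obtain the annihilator inclusion. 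Your reverse direction names the right Vogan--Joseph circle of ideas, but the facts you invoke (projective functors decomposing into wall-crossings, wall-crossing versus twisting via short exact sequences) are precisely the classical Lie-algebra statements whose super analogues are not available off the shelf; the route taken in \cite{Co16} reduces instead to the even subalgebra $\g_\oa$, where the classical theory applies, rather than redeveloping the projective-functor calculus for $\g$.
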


Theorem \ref{thm::Kevinaminthm} allows us to reduce  the problem of primitive spectrum to the problem of finding composition factors in twisted  simple modules.  For Lie superalgebras  $\gl(m|n)$, it is proved in \cite[Theorem 7.4]{Co16} that it can be computed by the~$\text{Ext}^1$-quiver of $\gl(m|n)$, also see  \cite[Conjecture~5.7]{CM18}.

 In this section, we shall prove a $\D$-analogue and use the reduction method to  answer  the problem of primitive spectrum for non-integral $1$- and $2$-integer $\D$-blocks. 

Let $\la \in \h^\ast$ and $\alpha \in \Phi^+_\oo$.   Then  {$L_\la$} is said to be   $s_\alpha$-free if the $f_\alpha$ acts freely on $L_\la$. Also,  {$L_\la$} is said to be $s_\alpha$-finite   if $L_\la$ is not $s_\alpha$-free. We refer the reader to \cite[Section 4.3.5]{CCC19} for the characterization of $s_\alpha$-freeness for $\D$. We recall that  $\Pi(\g_\oa)$ denotes the simple system of the root system of $\mf g_\oa$.
The following useful property holds for any basic classical Lie superalgebra.

\begin{lem}{\rm (}\cite[Theorem 5.12]{CM16}{\rm )}\label{lem51}
	Let $\mu \in \h^\ast$, $\alpha \in \Pi(\g_\oa)$ and $s:=s_{\alpha}$. Then we have:
	\begin{itemize}
		\item[(i)]  If $L_\mu$ is $s$-finite then $\Tw_sL_\mu =0$.
		\item[(ii)]  If $L_\mu$ is $s$-free and $\langle\mu, \alpha^\vee\rangle \notin \Z$ then $\Tw_sL_\mu$ is irreducible.
		\item[(iii)]  If $L_\mu$ is $s$-free and $\langle\mu, \alpha^\vee\rangle \in \Z$ then we have a short exact sequence
		$$0\rightarrow \emph{Rad} \Tw_sL_\mu \rightarrow \Tw_sL_\mu \rightarrow L_\mu \rightarrow  0.$$
	\end{itemize}	
\end{lem}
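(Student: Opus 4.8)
The statement is precisely \cite[Theorem 5.12]{CM16}, established there for arbitrary basic classical Lie superalgebras; so I only sketch the shape of the argument one would reproduce for $\D$. The plan is to reduce everything to the $\mathfrak{sl}_2$-triple $\sll_{(\alpha)}=\langle e_\alpha,f_\alpha,h_\alpha\rangle\subseteq\g_\oa$ attached to the even simple root $\alpha$, and to exploit the description of $\Tw_s$ through the Ore localization $U'_\alpha$ of $U(\g)$ at the powers of $X=f_\alpha$. Writing $L:=L_\mu$, I would start from the bimodule short exact sequence $0\to U\to U'_\alpha\to U'_\alpha/U\to 0$; tensoring over $U$ with $L$ and using right exactness gives an exact sequence $L\to U'_\alpha\otimes_U L\to (U'_\alpha/U)\otimes_U L\to 0$, and $\Tw_s L$ is the $\varphi$-twist of the last term, i.e. the $\varphi$-twist of the cokernel of $L\to U'_\alpha\otimes_U L$.

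For part (i): on an irreducible weight module with finite-dimensional weight spaces, $f_\alpha$ acts either injectively (freely) or locally nilpotently — this dichotomy is forced by $L$ being cyclically generated by a highest weight vector together with irreducibility. If $L$ is $s$-finite then $f_\alpha$ acts locally nilpotently, so every element of $L$ is killed by a power of $X$; hence $U'_\alpha\otimes_U L=0$, and the exact sequence above gives $(U'_\alpha/U)\otimes_U L=0$, i.e. $\Tw_s L=0$. For parts (ii) and (iii): $s$-freeness makes $X$ injective on $L$, so $L\hookrightarrow U'_\alpha\otimes_U L$ and $\Tw_s L\cong{}^{\varphi}\!\big((U'_\alpha\otimes_U L)/L\big)$. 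I would then decompose $L|_{\sll_{(\alpha)}}$ into indecomposable weight $\sll_{(\alpha)}$-modules (none of them finite-dimensional, by $s$-freeness), compute the localization quotient one $\sll_{(\alpha)}$-block at a time, and transport the resulting socle/top information back to $\g$. Concretely, one uses $\Tw_s M_\nu=M_{s\cdot\nu}$ for all $\nu$ together with the adjunction making $\Tw_s$ left adjoint to the completion (Enright/Joseph-type) functor $\mathcal{G}_s$, so that $\Hom_\g(\Tw_s L,L_\eta)\cong\Hom_\g(L,\mathcal{G}_s L_\eta)$; evaluating this shows $\mathrm{Top}(\Tw_s L)=L_\mu$ exactly when $\langle\mu,\alpha^\vee\rangle\in\Z$, which is case (iii) with $\mathrm{Rad}\,\Tw_s L$ the kernel, whereas for $\langle\mu,\alpha^\vee\rangle\notin\Z$ the relevant $\sll_{(\alpha)}$-block is "generic" and $\Tw_s$ restricts there to an equivalence, forcing $\Tw_s L$ irreducible, which is case (ii).

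The main obstacle is the last step: pinning down that $\Tw_s L_\mu$ has simple top $L_\mu$ in the integral case and is itself simple in the non-integral case. Because $\Tw_s$ is only right exact, one cannot argue directly inside $\mathcal O$; the clean route is to pass to the bounded derived category, use that $\mathcal{L}\Tw_s$ is a self-equivalence of $D^b(\mathcal O)$, combine this with the adjunction $(\Tw_s,\mathcal{G}_s)$, and invoke the vanishing $\mathcal{L}_i\Tw_s L=0$ for $i>0$ when $L$ is $s$-free. These are exactly the technical inputs carried out in \cite{CM16}, and I would cite them rather than redevelop them; once they are in hand, the top/irreducibility statements follow by a short $\Hom$-computation as above.
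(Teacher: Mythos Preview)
The paper does not prove this lemma at all; it is simply quoted verbatim as \cite[Theorem~5.12]{CM16} and used as a black box. Your proposal takes the same route---acknowledging that the result is established in \cite{CM16} for all basic classical Lie superalgebras and citing it---while additionally supplying a reasonable sketch of the localization/adjunction argument behind it, so your treatment is consistent with (and in fact more detailed than) the paper's own.
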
The radical $U^s_\mu:= \text{Rad} \Tw_sL_\mu $ is referred to as the {\em Jantzen middle}.
\vskip 0.5cm

We extend in this section the definition of $s$-finite to arbitrary module. That is, we call $M$ $s$-finite if all its composition factors are $s$-finite.

\begin{lem}{\rm (}\cite[Proposition 6.2]{Co16}{\rm )} \label{lem::Co16P62}
	If $L_\mu$ is $s$-free, then the Jantzen middle $U^s_\mu$ is isomorphic to the largest $s$-finite quotient of $\emph{Rad}P_\mu$.
\end{lem}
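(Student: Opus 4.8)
The plan is to pass to the contragredient dual, where the statement turns into a clean maximality property of the completion functor. Write $\alpha$ for the simple even root with $s=s_\alpha$. First I would dispose of the case $\langle\mu,\alpha^\vee\rangle\notin\Z$: then $\Tw_s L_\mu$ is irreducible by Lemma~\ref{lem51}(ii), so $U^s_\mu=0$, while all weights in the block of $\mu$ take the same value modulo $\Z$ on $\alpha^\vee$ (any two differ by an element of the root lattice), so no composition factor of that block is $s$-finite and $\text{Rad}\,P_\mu$ has no nonzero $s$-finite quotient; hence both sides vanish. So from now on assume $\langle\mu,\alpha^\vee\rangle\in\Z$.

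Let $\mathbf D$ be the contragredient duality functor; it is exact, fixes each $L_\nu$, hence preserves $s$-finiteness and exchanges ``largest $s$-finite quotient of $N$'' with ``largest $s$-finite submodule of $\mathbf D N$''. Thus it suffices to show that $\mathbf D U^s_\mu$ is the largest $s$-finite submodule of $\mathbf D(\text{Rad}\,P_\mu)$. Applying $\mathbf D$ to $0\to\text{Rad}\,P_\mu\to P_\mu\to L_\mu\to 0$ identifies $\mathbf D(\text{Rad}\,P_\mu)$ with $I_\mu/L_\mu$, where $I_\mu=\mathbf D P_\mu$ is the injective hull of $L_\mu$. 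On the other side, $\mathbf D\circ\Tw_s\cong\mathbb G_s\circ\mathbf D$, where $\mathbb G_s$ is the completion (Enright-type) functor right adjoint to $\Tw_s$ — this conjugation relation, together with the properties of $\mathbb G_s$ dual to those of $\Tw_s$, is recorded for basic classical Lie superalgebras in \cite{CM16} (see also \cite{AS03}). Hence $\mathbf D\Tw_s L_\mu\cong\mathbb G_s L_\mu$, and dualizing the exact sequence of Lemma~\ref{lem51}(iii) gives $0\to L_\mu\to\mathbb G_s L_\mu\to\mathbf D U^s_\mu\to 0$, in which $L_\mu$ is the socle of $\mathbb G_s L_\mu$ and $\mathbf D U^s_\mu\cong\mathbb G_s L_\mu/L_\mu$ is $s$-finite (the $s$-finiteness of the Jantzen middle $U^s_\mu$ being part of the cited theory, cf.\ also \cite{Co16}). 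Since $\mathbb G_s L_\mu$ then has essential simple socle $L_\mu$, it embeds into $I_\mu$, so $\mathbf D U^s_\mu\subseteq I_\mu/L_\mu$ is an $s$-finite submodule.

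It remains to check maximality. Given any $Y$ with $L_\mu\subseteq Y\subseteq I_\mu$ and $Y/L_\mu$ $s$-finite, I would argue that $\text{Soc}\,Y=\text{Soc}\,I_\mu=L_\mu$ is $s$-free, so $Y$ contains no nonzero $s$-finite submodule, i.e.\ $Y$ is itself $s$-free; by the universal property of the completion functor — $\mathbb G_s L_\mu$ is the maximal $s$-free extension of $L_\mu$ in $\mc O$ with $s$-finite cokernel — we get $Y\subseteq\mathbb G_s L_\mu$, hence $Y/L_\mu\subseteq\mathbf D U^s_\mu$, which is what is needed. I expect the only delicate point to be this reliance on the completion functor: one must quote the compatibility $\mathbf D\circ\Tw_s\cong\mathbb G_s\circ\mathbf D$ and the universal characterization of $\mathbb G_s$ in the super setting with the conventions of \cite{CM16,Co16}. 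An alternative that sidesteps $\mathbb G_s$ is to compute $[\Tw_s L_\mu]=\sum_\nu a_\nu[M_{s\cdot\nu}]$ in the Grothendieck group (where $[L_\mu]=\sum_\nu a_\nu[M_\nu]$ and $L_1\Tw_s L_\mu=0$ by $s$-freeness, so that $\mc L\Tw_s$ acts by the twisted reflection on $K_0$), deduce $\text{ch}\,U^s_\mu$, and match it against $\text{ch}\,\text{Rad}\,P_\mu$ obtained from BGG reciprocity (Theorem~\ref{BGGS}) after discarding the $s$-free part; but controlling that truncation is itself fiddly, so the duality route seems cleaner.
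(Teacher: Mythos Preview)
The paper does not prove this lemma at all: it is quoted verbatim as \cite[Proposition~6.2]{Co16}, so there is no in-paper argument to compare against. Your proposal is therefore supplying a proof where the paper only gives a citation.

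Your strategy---dualize, replace $\Tw_s$ by the right-adjoint completion functor $\mathbb G_s$, embed $\mathbb G_s L_\mu$ into the injective hull, then invoke maximality---is the standard route and is essentially how the result is established in \cite{Co16} (building on \cite{AS03,CM16}). Two small points deserve tightening. First, the sentence ``$Y$ contains no nonzero $s$-finite submodule, i.e.\ $Y$ is itself $s$-free'' conflates two notions: what you have shown is only that $\mathrm{Soc}\,Y=L_\mu$ is $s$-free, not that $f_\alpha$ acts freely on all of $Y$ (indeed $Y/L_\mu$ is $s$-finite, so $Y$ has $s$-finite composition factors). Fortunately the maximality step does not need $Y$ to be $s$-free in that strong sense; what it needs is exactly that $Y$ has simple $s$-free socle $L_\mu$ and $s$-finite cokernel, which you do have. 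Second, the ``universal property of $\mathbb G_s$'' you invoke---that $\mathbb G_s L_\mu$ is the maximal extension of $L_\mu$ with $s$-finite cokernel inside $I_\mu$---is true but is itself a lemma, not a definition; you should either cite it precisely from \cite{CM16} or derive it from the adjunction $\mathrm{Hom}(\Tw_s-, -)\cong\mathrm{Hom}(-,\mathbb G_s-)$ together with $\Tw_s(Y/L_\mu)=0$. With those two clarifications your argument is complete.
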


If $\mf g$ is a semisimple Lie algebra, then the semisimplicity of $U^s_\mu$, for all $s$ and for~$\lambda$ regular, is equivalent to the  validity of the Kazhdan-Lusztig  conjecture \cite{KL79} by \cite[Section~7]{AS03}. For the Lie superalgebras $\gl(m|n)$, the semisimplicity of the Jantzen middles has been established in \cite[Theorem 6.10]{Co16} using the validity of  Brundan-Kazhdan-Lusztig conjecture. 

We now prove a $\D$-analogue. We first note that  Jantzen middles in generic atypical blocks are zeros by Lemma \ref{lem51}.  The following lemma will be useful.
\begin{lem} \label{lem::55}
	Suppose $M\in \mc O$ has  multiplicity-free  composition factors. Then $M$ is semisimple if and only if ${\rm Soc}M \cong {\rm Rad}M$.
\end{lem}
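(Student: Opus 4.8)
The plan is to establish the non-trivial direction — that $\operatorname{Soc}M\cong M/\operatorname{Rad}M$ forces $M$ to be semisimple — by induction on the composition length of $M$; I read the statement as asserting that $M$ is semisimple precisely when its socle is isomorphic to its head $M/\operatorname{Rad}M$. The reverse implication is immediate, since a semisimple module satisfies $\operatorname{Soc}M=M=M/\operatorname{Rad}M$.

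The first step is to use the multiplicity-free hypothesis to replace the isomorphism condition by a combinatorial one. Let $L_1,\dots,L_n$ be the pairwise non-isomorphic composition factors of $M$, each occurring with multiplicity one. Then $\operatorname{Soc}M\cong\bigoplus_{i\in S}L_i$ and $M/\operatorname{Rad}M\cong\bigoplus_{i\in T}L_i$ for some subsets $S,T\subseteq\{1,\dots,n\}$, because both the socle and the head are semisimple. Since isomorphic semisimple modules have the same composition factors, the hypothesis $\operatorname{Soc}M\cong M/\operatorname{Rad}M$ is equivalent to $S=T$.

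The key step is a splitting argument. Assume $M\neq 0$ and choose a simple submodule $L\subseteq\operatorname{Soc}M$; then the isomorphism class of $L$ lies in $S=T$, so $L$ is also a quotient of $M$, say via $\pi\colon M\twoheadrightarrow L$. The composite of the inclusion $L\hookrightarrow M$ with $\pi$ is an endomorphism of the simple module $L$, hence either zero or an isomorphism. It cannot be zero: if $L\subseteq\ker\pi$, then $L$ would contribute a composition factor to $\ker\pi$ as well as to the quotient $M/\ker\pi\cong L$, giving $[M:L]\geq 2$ and contradicting the multiplicity-free assumption. Therefore $\pi|_L$ is an isomorphism; since $L\cap\ker\pi=0$ and $\ell(\ker\pi)+\ell(L)=\ell(M)$, this yields $M=L\oplus\ker\pi$.

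Finally I would run the induction. Put $M'=\ker\pi$: it has strictly smaller length, its composition factors are again multiplicity-free, and $[M':L]=0$. From $M=L\oplus M'$ one obtains $\operatorname{Soc}M=L\oplus\operatorname{Soc}M'$ and $M/\operatorname{Rad}M\cong L\oplus(M'/\operatorname{Rad}M')$; comparing with $S=T$ and using that $L$ does not occur among the composition factors of $M'$ forces $\operatorname{Soc}M'\cong M'/\operatorname{Rad}M'$. By the inductive hypothesis $M'$ is semisimple, hence so is $M=L\oplus M'$. I do not expect a genuine obstacle here; the only point demanding care is the last one — verifying that the socle/head isomorphism descends to the complement $M'$ — and this is exactly where multiplicity-freeness is indispensable, since otherwise the uniserial module with socle and head both $\cong L$ and a single middle factor $L'\not\cong L$ would be a counterexample.
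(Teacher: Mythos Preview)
Your proof is correct and follows essentially the same approach as the paper: pick a simple submodule $L$, use the hypothesis to obtain an epimorphism $M\twoheadrightarrow L$, argue via multiplicity-freeness that the composite $L\hookrightarrow M\twoheadrightarrow L$ is nonzero (hence an isomorphism), split $M=L\oplus M'$, and iterate. Your write-up is in fact slightly more careful than the paper's, since you explicitly verify that the socle/head condition descends to the complement $M'$ before invoking induction, whereas the paper simply says ``by repeating the same argument.''
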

\begin{proof}
	It remains to prove that ${\rm Soc}M \cong {\rm Rad}M$ implies the semisimplicity of $M$.
	
	Let $i:L \rightarrow M$ be an embedding of simple submodule $L$. By the assumption, there is an epimorphism $p: M\rightarrow L$. We now claim that $p\circ i: L \rightarrow L$ is an isomorphism. Suppose on the contrary that  $p\circ i=0$ then ${\rm Ker}(p) \supset i(L)$ and so $[M:L]>1$, a contradiction. This means  $p\circ i$ is an isomorphism. Consequently, $i$ is a section of $p$, namely, we have $M =i(L)\oplus {\rm Ker}(p)$.
	
	If ${\rm Ker}(p) \neq 0$ then we consider an  embedding $i': L' \rightarrow {\rm Ker}(p)\subset M$ of a simple submodule $L'$. Using the similar argument we have that $i'(L')$ is a direct summand of ${\rm Ker}(p)$. By repeating the same argument we may conclude that $M$ is semisimple.
\end{proof}

Set $s=s_\alpha$ for some $\alpha \in \Pi(\mf g_\oa)$.   In this case when $\la$ is an  $1$-integer weight, we let $\rm{Ind}_{\mf p}^\g(\cdot): \OO(\gl(2|1),\mf b)_0 \rightarrow \OO'_\la$ denote the  equivalence of parabolic induction with inverse $(\cdot)^{\mf u}$ established in Sections   \ref{Sect::Eqiv0yz}--\ref{Sect::Eqivx0z}. We note that $\alpha$ is a root of $\mf l$ if and only if  $\langle\la, \alpha^\vee\rangle \in \Z$. Therefore, we have $U^s_\mu=0$ if $\alpha$ is not a root of $\mf l$ by Lemma \ref{lem51}.

We now in a position to prove the following theorem.

\begin{thm} \label{thm::ssJantzenmiddle} Let $\la \in \h^\ast$ be a non-integral atypical weight. Then every Jantzen middles in $\mc O_\la$ is either zero or semisimple. In particular, if  $L_\mu\in \mc O_\la$ is $s$-free then we have
	$$U^s_\mu\;\cong\;\bigoplus_{L_\nu:~ \,s  \,{\rm \emph{-} finite}}L_\nu^{\oplus m(\mu,\nu)}$$
	where~$m(\mu,\nu)=\dim\emph{Ext}^1_{\mc O}(L_\mu,L_\nu)$.
	
In addition, in the case when $\la$  is an $1$-integer weight we have the following: If  $\alpha \in \Pi(\mf g_\oa)$ is a root  of $\mf l$ and $s:=s_\alpha$, then  for any  $L_\mu,  L_{\nu} \in \mc O_\la$ such that  $L_\mu$ is $s$-free and $L_\nu$ is $s$-finite  we have $m(\mu,\nu)=\dim\emph{Ext}^1_{\mc O}(L_\mu,L_\nu)= \dim\emph{Ext}^1_{\mc O(\gl(2|1),\mf b)}(L^{\mf a}(\mu),L^{\mf a}(\nu))$.
\end{thm}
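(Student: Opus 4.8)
The plan is to treat the three classes of non-integral atypical blocks of Section~\ref{Sect::blocks} --- generic, $1$-integer and $2$-integer --- separately, reducing each to an input already at hand. In a generic block $\mc O_\la$ one has $\langle\mu,\alpha^\vee\rangle\notin\Z$ for every $\mu$ with $L_\mu\in\mc O_\la$ and every $\alpha\in\Pi(\g_\oa)$, so by Lemma~\ref{lem51}(i)--(ii) the module $\Tw_{s_\alpha}L_\mu$ is zero or irreducible; hence $U^{s_\alpha}_\mu=0$ and the displayed formula holds with an empty sum. For the other two classes the combinatorial content of the statement is formal once semisimplicity is known: by Lemma~\ref{lem::Co16P62}, $U^s_\mu$ is the largest $s$-finite quotient of $\mathrm{Rad}\,P_\mu$, so for $s$-finite $\nu$ every nonzero map $\mathrm{Rad}\,P_\mu\to L_\nu$ is onto with $s$-finite image and hence factors through $U^s_\mu$; thus $[U^s_\mu:L_\nu]=\dim\Hom(U^s_\mu,L_\nu)=\dim\Hom(\mathrm{Rad}\,P_\mu,L_\nu)=\dim\mathrm{Ext}^1_{\mc O}(L_\mu,L_\nu)=m(\mu,\nu)$, and combined with semisimplicity this gives $U^s_\mu\cong\bigoplus_\nu L_\nu^{\oplus m(\mu,\nu)}$, the sum running over the $s$-finite weights.

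For a $1$-integer block, Theorem~\ref{Prop::IrrBlocks} together with the equivalences of Sections~\ref{Sect::Eqiv0yz}--\ref{Sect::Eqivx0z} identifies $\mc O_\la$, as an abelian category, with the principal block $\mc O(\gl(2|1),\mf b)_0$, sending $L_\mu$ to $L^{\mf a}(\mu)$ under the weight dictionary of those sections. Among $s_0,s_1,s_2$, only the one $s=s_\alpha$ whose root $\alpha$ generates the even part of the corresponding Levi $\mf l\cong\gl(2|1)$ satisfies $\langle\la,\alpha^\vee\rangle\in\Z$, so $U^{s'}_\mu=0$ for the other two by Lemma~\ref{lem51}(ii). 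That equivalence carries projectives to projectives and radicals to radicals, and since the $\sll$-triple attached to $\alpha$ lies inside $\mf l$, the isomorphism $L_\mu=\mathrm{Ind}_{\mf p}^\g L^{\mf a}(\mu)\cong U(\mf u^-)\otimes L^{\mf a}(\mu)$ of $\mf l$-modules (with $\mf u^-$ the opposite nilradical, finite-dimensional under $\mathrm{ad}$ of that $\sll$) shows that $f_\alpha$ acts injectively on $L_\mu$ iff it does on $L^{\mf a}(\mu)$, i.e.\ $s$-finiteness is preserved (consistent with the criterion of \cite[Section~4.3.5]{CCC19}). Hence $U^s_\mu$ corresponds to a Jantzen middle for $\gl(2|1)$, which is semisimple by \cite[Theorem~6.10]{Co16}; this disposes of the $1$-integer case. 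The remaining identity $\dim\mathrm{Ext}^1_{\mc O}(L_\mu,L_\nu)=\dim\mathrm{Ext}^1_{\mc O(\gl(2|1),\mf b)}(L^{\mf a}(\mu),L^{\mf a}(\nu))$ is just the statement that the equivalence preserves $\mathrm{Ext}^1$; the hypotheses on $\alpha$ and on $s$-freeness/finiteness of $L_\mu,L_\nu$ in the theorem merely single out the pairs occurring in the Jantzen middles.

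For a $2$-integer block there is no reduction to a general linear Lie superalgebra (cf.\ the Remark at the end of Section~\ref{ynotinZ}), so here I would argue by explicit computation. The tilting character formulas of Section~\ref{Sect::ChFormulae} together with the BGG reciprocity of Theorem~\ref{BGGS} yield the full decomposition matrix and the Cartan matrix $[P_\mu:L_\nu]$ of each $2$-integer block; running through the finitely many block types shows that $\mathrm{Rad}\,P_\mu$ is multiplicity-free, hence so is its largest $s$-finite quotient $U^s_\mu$. By Lemma~\ref{lem::55} it then remains only to check that the socle and the top of $U^s_\mu$ coincide. I expect this to be the main obstacle: the character data determine $P_\mu$ only up to semisimplification, so one must pin down the actual submodule structure of $\mathrm{Rad}\,P_\mu$ --- using the self-duality of the block, the rigidity coming from the minimality of the Verma flags established in Section~\ref{Sect::ChFormulae}, and the $\gl(1|1)$- and $\gl(2|1)$-subalgebra structure --- and then match socle with top of the $s$-finite quotient in each block type; this bookkeeping carries the bulk of the argument.
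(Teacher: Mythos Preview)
Your treatment of the generic and $1$-integer cases is essentially the paper's argument: in the generic case the Jantzen middles vanish by Lemma~\ref{lem51}, and in the $1$-integer case the equivalence with the principal block of $\gl(2|1)$ transports the problem to \cite[Theorem~6.10]{Co16}. Your justification that $s$-finiteness is preserved under $\mathrm{Ind}_{\mf p}^\g$ and $(\cdot)^{\mf u}$ is the same observation the paper makes, and the identification of $U^s_\mu$ with the corresponding $\gl(2|1)$ Jantzen middle via Lemma~\ref{lem::Co16P62} is exactly the mechanism used.

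The gap is in the $2$-integer case. You correctly identify that Lemma~\ref{lem::55} reduces semisimplicity to checking (a) multiplicity-freeness and (b) $\mathrm{Top}\,U^s_\mu\cong\mathrm{Soc}\,U^s_\mu$, but you then propose to verify (b) by reconstructing the submodule lattice of $\mathrm{Rad}\,P_\mu$ block-by-block, which you yourself flag as the main obstacle and do not carry out. The paper avoids this entirely: the isomorphism $\mathrm{Top}\,U^s_\mu\cong\mathrm{Soc}\,U^s_\mu$ is a \emph{general} property of Jantzen middles, established in \cite[Corollary~5.14]{CM18}, so no case analysis is needed. For (a), the paper also argues more directly than you do: rather than going through $\mathrm{Rad}\,P_\mu$ (whose multiplicity-freeness is not obvious from the Cartan matrix), one notes that $\Tw_s L_\mu$ is a quotient of $\Tw_s M_\mu$, and $\mathrm{ch}\,\Tw_s M_\mu=\mathrm{ch}\,M_{s\mu}$ by \cite[Lemma~5.5]{CM16}; since the Verma modules in these blocks are multiplicity-free by the explicit formulas of Section~\ref{Sect::ChFormulae}, so is $U^s_\mu$. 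With these two inputs, Lemma~\ref{lem::55} finishes the $2$-integer case in one line.
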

\begin{proof}
	If $\la$ is generic then all Jantzen middles are zeros as mentioned above, so we first assume that $\la$ is an $1$-integer weight.
	
	Let   $\alpha$ be a positive root of $\mf l$. Then  for a given  $L_\nu \in \mc O_\la$, we observe that   $L_{\nu} = \rm{Ind}_{\mf p}^\g L^{\mf a}(\nu)$ is $s$-free if and only if ${L^{\mf a}(\nu)} =(L_{\nu})^{\mf u} $ is $s$-free. Therefore  $\rm{Ind}_{\mf p}^\g(\cdot)$ and $(\cdot)^{\mf u}$ provide bijections of $s$-finite modules.  Consequently, $\rm{Ind}_{\mf p}^\g(\cdot)$ gives rise to a bijection between Jantzen middles by Lemma \ref{lem::Co16P62}.  Therefore the semisimplicity of $U^s_\mu$  follows from that of $\gl(2|1)$ by \cite[Theorem 6.10]{Co16}. The desired structure of $U^s_\mu$ now follows from \cite[Theorem 6.8]{Co16}.
	
	We now assume that $\la$ is a $2$-integer weight. By results in Section \ref{Sect::ChFormulae}, all Verma modules in $\mc O_\la$ are multiplicity-free. In particular, each Jantzen middle $U^s_\mu$ is multiplicity-free since $\text{ch}\mathbb T_{s} M_\mu =\text{ch} M_{s  \mu}$ (cf. \cite[Lemma 5.5]{CM16}) and $\mathbb T_s L_\mu$ is an epimorphic image of  $\mathbb T_{s}M_\mu$. By \cite[Corollary 5.14]{CM18} we have   $$\text{Top} U^s_\mu \cong \text{Soc} U^s_\mu,$$ which implies that $U^s_\mu$ is semisimple by Lemma \ref{lem::55}. The proof now follows by \cite[Proposition 6.8]{Co16}.
\end{proof}

As a consequence of Theorem \ref{thm::Kevinaminthm} and Theorem \ref{thm::ssJantzenmiddle}, we have the following corollary, which reduces the problem of primitive spectra for $\D$ to that of $\gl(2|1)$. The latter was computed in \cite{Mu93} (also, see \cite{CM18}).

\begin{cor} Let $\la =(x,y,z)\in \h^\ast$ be a non-integral atypical weight. 
	Then for any  $\mu \in \h^\ast$ we have:
	\begin{itemize}
		\item[(1)] If $\la$ is generic then $J_\la \subset J_\mu$ if and only if $L_\mu = {\mathbb T_{s_\alpha} L_\la},$ for some $\alpha \in \Pi(\mf g_\oa)$. In this case, we have $J_\la = J_\mu$.
		\item[(2)] If  $\la$ is an $1$-integer weight then $J_\la \subset J_\mu$  if and only if either of the following conditions holds:
		\begin{itemize}
			\item[(a)] $L_\mu =\mathbb T_{s_\alpha} L_\la$, for $\alpha \in \Pi(\mf g_\oa)$ with $\langle\la, \alpha^\vee\rangle \notin \Z.$
			\item[(b)] ${{\rm Ann}_{U(\gl(2|1))}}{L^{\mf a}(\la)} \subset {{\rm Ann}_{U(\gl(2|1))}}{L^{\mf a}(\mu)}$.
		\end{itemize}
	\end{itemize}

\end{cor}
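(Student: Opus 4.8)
My plan is to deduce the corollary from Theorem~\ref{thm::Kevinaminthm}, which rewrites an inclusion $J_\la\subset J_\mu$ as the relation $\la\KL^c\mu$ in the completed Kazhdan--Lusztig order, combined with Theorem~\ref{thm::ssJantzenmiddle} on the semisimplicity of Jantzen middles and the category equivalences of Sections~\ref{Sect::Eqivxyz}--\ref{Sect::Eqivx0z}. The first step is to describe the elementary steps generating $\KL^c$: a step $\nu\KL^c\nu'$ occurs exactly when $L_{\nu'}$ is a subquotient of $\mathbb T_{s_\alpha}L_\nu$ for some $\alpha\in\Pi(\g_\oa)$, and by Lemma~\ref{lem51} only two possibilities survive. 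If $\langle\nu,\alpha^\vee\rangle\notin\Z$ and $L_\nu$ is $s_\alpha$-free, then $\mathbb T_{s_\alpha}L_\nu$ is a single simple module and $\mathbb T_{s_\alpha}\colon\mc O_\nu\to\mc O_{s_\alpha\nu}$ is an equivalence (Lemma~\ref{lem::equiTs}) which, since $s_\alpha^2=1$ and it preserves Verma characters, is its own quasi-inverse; such a step is therefore symmetric and preserves the annihilator. If $\langle\nu,\alpha^\vee\rangle\in\Z$, the only new subquotients come from the Jantzen middle $U^{s_\alpha}_\nu$, which by Theorem~\ref{thm::ssJantzenmiddle} is semisimple with $L_{\nu'}$ occurring iff $L_{\nu'}$ is $s_\alpha$-finite and $\mathrm{Ext}^1_{\mc O}(L_\nu,L_{\nu'})\neq 0$. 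Thus $\KL^c$ is transitively generated by \emph{equivalence steps} (along non-integral even walls, preserving $J$) and \emph{$\mathrm{Ext}^1$-steps} (along integral even walls).

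\emph{Generic case.} By Theorem~\ref{thm::genericthm} every simple in the block $\mc O_\la$ has only non-integral coordinates, so no $\mathrm{Ext}^1$-step is available; hence $\la\KL^c\mu$ forces $J_\la=J_\mu$ and $L_\mu$ to be obtained from $L_\la$ by a composite of twists $\mathbb T_{s_\alpha}$, $\alpha\in\Pi(\g_\oa)$. To obtain Part~(1) I would then show such a composite reduces to a single twist: since $\g_\oa\cong\sll\oplus\sll\oplus\sll$ has commuting simple factors, the even reflections commute and act trivially on one another's coroots, while the star action $\ast$ of \cite[Lemma~8.3]{CM16} shifts highest weights by an odd root determined by the atypicality datum of $\la$; computing $\ast$ explicitly and using the $s$-freeness criterion for $\D$ of \cite[Section 4.3.5]{CCC19} one checks that after one twist no further twist is possible, so the $\KL^c$-class of $\la$ is $\{L_\la\}\cup\{\mathbb T_{s_\alpha}L_\la\mid\alpha\in\Pi(\g_\oa)\}$. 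The equality $J_\la=J_\mu$ is also visible through the equivalence $\mc O_\la\cong\OO(\gl(1|1),\mf b^s)_0$ of Theorem~\ref{thm:eqv:generic}, in which every simple in a block is one-dimensional and hence has a maximal annihilator, so that distinct simples of one block have incomparable annihilators.

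\emph{One-integer case.} Here I would first apply Lemma~\ref{lem::equiTs} to put the atypicality on the odd simple root, then use the odd-reflection and parabolic-induction equivalences of Sections~\ref{Sect::Eqiv0yz}--\ref{Sect::Eqivx0z} to identify $\mc O_\la$ with $\OO(\gl(2|1),\mf b)_0$, writing $L^{\mf a}(\mu)$ for the $\gl(2|1)$-module matched with $L_\mu$. An even reflection $s_\alpha$ is a root of the Levi $\mf l\cong\gl(2|1)$ precisely when $\langle\la,\alpha^\vee\rangle\in\Z$; for these, the last assertion of Theorem~\ref{thm::ssJantzenmiddle} gives that $s_\alpha$-freeness/finiteness and the relevant $\mathrm{Ext}^1$-multiplicities between simples coincide with those of $\gl(2|1)$, so the $\mathrm{Ext}^1$-steps of $\KL^c$ in $\mc O_\la$ are exactly the ones of $\gl(2|1)$; applying Theorem~\ref{thm::Kevinaminthm} to $\gl(2|1)$ (whose primitive spectrum is known, \cite{Mu93}, see also \cite{CM18}) converts them into the condition $\mathrm{Ann}_{U(\gl(2|1))}L^{\mf a}(\la)\subset\mathrm{Ann}_{U(\gl(2|1))}L^{\mf a}(\mu)$ of~(b). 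The steps along the remaining (non-integral) even walls are equivalence steps, realized by the functors $\mathbb T_{s_\alpha}$ of~(a) and preserving $J$; using again that the $\sll$-factors commute one checks that such steps do not disturb the Levi structure, so that transitive generation yields exactly the alternative ``(a) or (b)''.

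The step I expect to be the main obstacle is the combinatorial bookkeeping just alluded to: showing that interleaving and iterating the non-integral twisting functors with the $\gl(2|1)$-order steps produces no inclusions beyond those in (a) and (b). This rests on (i) verifying that each non-integral $\mathbb T_{s_\alpha}$ is a genuine equivalence of blocks that is its own quasi-inverse, so that it contributes only equalities of annihilators, and (ii) tracking the star action $\ast$ carefully (it differs from the linear $W$-action by odd-root shifts) in order to identify the highest weights of all twisted simples and to see, via the $\D$-analogue of the $s$-freeness criterion, that no long chain of non-integral twists survives.
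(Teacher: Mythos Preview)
Your overall strategy is exactly the one the paper intends: the corollary is stated there without proof, merely as ``a consequence of Theorem~\ref{thm::Kevinaminthm} and Theorem~\ref{thm::ssJantzenmiddle}'', and your reduction of $\KL^c$ to (i) equivalence steps along non-integral even walls and (ii) $\mathrm{Ext}^1$-steps along integral walls via Lemma~\ref{lem51} is precisely the intended mechanism. The transport of the integral steps to $\gl(2|1)$ through the parabolic induction equivalence, together with the last sentence of Theorem~\ref{thm::ssJantzenmiddle}, is also how the paper wants Part~(2) read.

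There is, however, a genuine gap in your generic-case argument. Your assertion that ``after one twist no further twist is possible'' and that the $\KL^c$-class is $\{L_\la\}\cup\{\Tw_{s_\alpha}L_\la\mid\alpha\in\Pi(\g_\oa)\}$ is not correct. For generic $\la=(x,y,z)$ every simple in every block of the $W$-orbit has all three coordinates non-integral, hence is $s$-free for each simple reflection (the relevant $\sll$-weights are never integers), so every $\Tw_{s_i}$ is a nontrivial equivalence and further twists are always available. Since the three reflections land in pairwise distinct blocks $\mc O_{s_i\la}$ and composites such as $\Tw_{s_1}\Tw_{s_0}L_\la$ lie in $\mc O_{s_1s_0\la}$, which is generically different from $\mc O_\la,\mc O_{s_0\la},\mc O_{s_1\la},\mc O_{s_2\la}$, the equivalence class under iterated twists has (generically) $|W|=8$ members, not four. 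The star action does not collapse these: it only changes the \emph{label} of the target simple within the target block, not the block itself. Your proposed mechanism for cutting the chain --- that the next simple becomes $s$-finite --- does not occur here. In short, the ``only if'' direction of Part~(1) as literally stated cannot be obtained by showing composites reduce to single twists; rather, the content is that the inclusion order on generic atypical weights is \emph{generated} by the single-twist relation and that every such step is an equality of annihilators (so the class is the full twist-orbit). You correctly identify this bookkeeping as the main obstacle, but the resolution you sketch is in the wrong direction.

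For Part~(2) the same caution applies: the transitive closure of ``(a) or (b)'' is not automatically of the form ``(a) or (b)'' for a single $\alpha$, so you should read (a) as contributing equalities (which can be iterated and interleaved with (b)) rather than as a one-step condition. With that reading your use of the induction equivalence and of Theorem~\ref{thm::ssJantzenmiddle} is sound and coincides with the paper's argument.
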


\end{document}